\numberwithin{theorem}{section}
\newcommand{\TheTitle}{Weighted Total Variation Clustering: model and analysis} 
\newcommand{\TheAuthors}{G.D.Xu, Y.Xia, H.Ji}
\headers{\TheTitle}{\TheAuthors}
\title{{\TheTitle}\thanks{Submitted to the editors DATE.
\funding{This work was funded by the Fog Research Institute under contract no.~FRI-454.}}}
\author{
  Guodong Xu\thanks{Department of Mathematics, National University of Singapore, 
    (\email{a0109978@u.nus.edu}, \email{matjh@nus.edu.sg}).}
  \and
  Yu Xia\thanks{Department of Mathematics, Zhejiang University (\email{xiayu03235@126.com}).}
  \and
  Hui Ji\footnotemark[1]
}
\newcommand{\argmin}{\operatornamewithlimits{argmin}}
\newcommand{\dist}{\mathrm{dist}}
\newcommand{\CS}{\mathcal{S}}
\newcommand{\dia}{\mathrm{dia}}
\newcommand{\BR}{\mathbb{R}}
\newcommand{\sign}{\mathrm{sign}}
\newcommand{\tr}{\mathrm{tr}}
\newcommand{\etaln}{\mbox{\emph{et al.\ }}}
\newcommand{\ie}{\mbox{\emph{i.e.\ }}}
\newcommand{\eg}{\mbox{\emph{e.g.\ }}}
\definecolor{c1}{rgb}{0.968,0.461,0.4218}
\definecolor{c2}{rgb}{0.957,0.387,0.883}
\definecolor{c3}{rgb}{0,0.723,0.219}
\definecolor{c4}{rgb}{0,0.742,0.766}
\definecolor{c5}{rgb}{0.175,0.56,0.996}
\definecolor{c6}{rgb}{0.715,0.621,0}
\newcommand{\hwplotA}{\raisebox{2pt}{\tikz{\draw[c1,solid,line width=1.2pt](0,0) -- (5mm,0);}}}
\newcommand{\hwplotG}{\raisebox{2pt}{\tikz{\draw[c5,densely dashed,line width=1.2pt](0,0) -- (5mm,0);}}}
\newcommand{\hwplotH}{\raisebox{2pt}{\tikz{\draw[c3,dashed,line width=1.2pt](0,0) -- (5mm,0);}}}
\title{Weighted total variation  based convex clustering}
\author{Guodong Xu%
  \thanks{Department of Mathematics, National University of Singapore, 
    (\email{a0109978@u.nus.edu}, \email{matjh@nus.edu.sg}).}
  \and
  Yu Xia\thanks{Department of Mathematics, Zhejiang University (\email{xiayu03235@126.com}).}
  \and
  Hui Ji\footnotemark[1]}
\begin{document}

\maketitle

\begin{abstract}
Data clustering is a fundamental problem with a wide range of applications. Standard methods, \eg the $k$-means method, usually require solving a non-convex optimization problem.  Recently, total variation based convex relaxation to the  $k$-means model has emerged as an attractive alternative for data clustering. However, the existing results on its exact clustering property, \ie, the condition imposed on data so that the method can provably give correct identification of all cluster memberships, is only applicable to very specific data and   is also much more restrictive than that of some other methods. This paper aims at the revisit of total variation based convex clustering, by proposing a weighted sum-of-$\ell_1$-norm relating convex model. Its exact clustering property established in this paper, in both deterministic and probabilistic context, is applicable to general data and is much sharper than the existing results. These results provided good insights to advance the research on convex clustering. Moreover, the experiments also demonstrated that the proposed convex model has better empirical performance when be compared to standard clustering methods, and thus it can see its potential in practice.
\end{abstract}
\begin{keywords}
  convex clustering, $k$-means, primal-dual solution.
\end{keywords}

\begin{AMS}
  68Q25, 68R10, 68U05
\end{AMS}

\section{Introduction} 
\label{sec:introduction}
The task of data clustering is to group data into a set of groups (called clusters), such that data objects in the same cluster are similar to each other, while data objects in different clusters are dissimilar from one another. More specifically, considering $m$ feature vectors $\{A_1,A_2,\ldots, A_m\}\subset \BR^n$. We assemble a data matrix,
denoted by $A\in\mathbb{R}^{m\times n}$, whose rows represent feature vectors. The task of clustering is then to partition these $m$ row vectors into different groups based on their dissimilarity (\ie distance) in some metric space~$(M,d)$, where $M$ is the set the observations live in and $d: M\times M\rightarrow\mathbb{R}$ is the function that quantizes the dissimilarity. The most often seen distances include $\ell_2$-norm~(\emph{$k$-means}~model~\cite{macqueen1967some}) and~$\ell_1$-norm (\emph{$k$-median} model~\cite{jain1988algorithms}).
In other words, a clustering method is to partition the set of the rows of $A$ into $k$ groups: 
\[
\{\mathcal{S}^{(0)},\mathcal{S}^{(1)},\cdots,\mathcal{S}^{(k-1)}\}
\]
where $\mathcal{S}^{(i)}=\{A_{j_k}\}_{s=0}^{m_i-1}$, such that the distances among the rows in the same group are small and the distances between any two rows from two different groups are large.

Data clustering is a fundamental problem with a wide range of applications, including data mining,  machine learning, pattern recognition, image analysis, information retrieval, bioinformatics, data compression, and computer graphics. See \eg \cite{coleman1979image,kaufman2009finding,gong2013fuzzy,filipovych2011semi,huth2008classifications,bewley2011real,basak1988determining}).  
In the past, many clustering algorithms have been developed, such as \emph{$k$-means clustering}, \emph{hierarchical clustering}, \emph{spectral clustering} and many others (see \cite{xu2005survey} for a survey). 
Among them, $k$-means clustering arguably is the prominent one used in practice. The basic idea of $k$-means method  \cite{macqueen1967some} is to partition the $m$ feature vectors $\{A_i\}_{i=1}^m$ into $k$ clusters with a minimal sum of inner-class distances. It can be formulated as the following  optimization problem:
\begin{equation}\label{eqn:kmeans}
\begin{split}
\min_{\{\CS^{(i)}\}_{i=0}^{k-1}}\sum_{i=0}^{k-1}\sum_{A_{j}\in\CS^{(i)}}\|A_j-c_i\|_2^2\quad\mbox{s.t.} \quad c_i = \frac{1}{|\CS^{(i)}|}\sum_{A_{j}\in\CS^{(i)}}A_j. 
\end{split}
\end{equation}
The optimization problem above is NP-hard, and it is shown in~\cite{lindsten2011just} that this problem is equivalent to  a non-convex optimization problem with $\ell_0$-norm relating constraints:
\begin{equation}\label{eqn:nonconvex_equivalent}
\widehat{X}=\mathrm{argmin}_{X\in\BR^{m\times n}}\|A-X\|_F^2, \quad
\mbox{s.t.}\quad k=m-\|\alpha\|_0,
\end{equation}
where 
\[
\left\{
\begin{array}{lcl} 
\alpha&=&[\alpha_2,\cdots,\alpha_m],\\
\alpha_j&=&j-1-\|\beta^{(j)}\|_0,\\
\beta^{(j)}&=&[\beta_1^{(j)},\cdots,\beta_{j-1}^{(j)}],\\
\beta_i^{(j)}&=&\|X_i-X_j\|_2.
\end{array}
\right.
\]
The minimizer  $\widehat{X}$  contains $k$ unique unique rows, say $\{\widehat{X}_{j_k}\}_{s=0}^{k-1}$,  then the $m$ observations are grouped into $k$ clusters based on their distances to $\{\widehat{X}_{j_k}\}_{s=0}^{k-1}$: 
\[
\CS^{(s)}=\{A_{j}\ |\ \|A_{j}-\widehat{X}_{j_k}\|_2\leq\|A_{j}-\widehat{X}_{j_{s'}}\|_2,\ s'\neq s\},\quad s=0,1,\cdots,k-1.
\]

\subsection{TV-based convex relaxation}
Clearly, the optimization problem~\eqref{eqn:nonconvex_equivalent} is non-convex, and thus it is difficult to find a global minimizer. Moreover, the outcome of its solver usually is sensitive to the initialization.  
 Similar to the $\ell_1$-norm relating convex relaxation used in compress sensing (see \eg\cite{candes2006robust}) and the total variation for image recovery (see \eg\cite{Rudin1992}),
 the convex relaxation of the  problem \eqref{eqn:nonconvex_equivalent} can be done by replacing $\ell_0$-norm by convex norms. Lindsten~\etaln\cite{lindsten2011just} proposed the following convex sum-of-$\ell_p$-norm  model,
\begin{equation}\label{eqn:convex}
\min_{X\in \BR^{m\times n}}\|A-X\|_F^2+c\sum_{j=2}^{m}\sum_{i<j}\|X_i-X_j\|_p,
\end{equation}
where $p\geq 1$.
In~\eqref{eqn:convex}, the sum--of-$\ell_p$-norm term refers to the total variation (TV) of $X$ along columns,
$c$ is a tuning parameter which determines the number of unique rows of the minimizer (equivalent to the number of output clusters). The $\ell_p$-norm relating optimization problem~\eqref{eqn:convex} has been  extensively studied in recent years, and  there exist many efficient numerical solvers, such as ADMM method \cite{chi2014convex} and AMA method~\cite{chi2014splitting}.  Regarding the choice of the parameter $c$, Hocking~\etaln\cite{hocking2011clusterpath} proposed a selection strategy of $c$. It was shown in \cite{hocking2011clusterpath} that  the convex model~\eqref{eqn:convex} outperforms several existing clustering techniques, including $k$-means and single linkage clustering, in certain applications.

Theoretical analysis on the model \eqref{eqn:convex} with $p=2$,
\begin{equation}\label{eqn:convex2}
\min_{X\in \BR^{m\times n}}\|A-X\|_F^2+c\sum_{j=2}^{m}\sum_{i<j}\|X_i-X_j\|_2,
\end{equation}
 is first presented in Zhu~\etaln\cite{zhu2014convex}, in which   an exact clustering condition is established for 
  the data with only two clusters ($k=2$). For a pair of $n$-dimensional clusters, denoted by $\CS^{(i)}$ and $\CS^{(j)}$, define their
\emph{intra-class distance} by
\begin{equation}\label{eqn:intradist}
\dist(\CS^{(i)},\CS^{(j)})={\inf}_{x\in \CS^{(i)}, y\in \CS^{(j)}}\|x-y\|_2.
\end{equation}
For a   cluster $\mathcal{S}$, define its
\emph{inner-class distance} (diameter of cluster $\mathcal{S}$) by
\begin{equation}\label{eqn:innerdist}
\dia(\CS)=\sup_{x\neq y\in \CS} \|x-y\|_2.
\end{equation}
Zhu~\etaln\cite{zhu2014convex} showed  that  two clusters, $\CS^{(0)}$ and $\CS^{(1)}$, can be exactly clustered via \eqref{eqn:convex}, provided that
\begin{equation}\label{eqn:zhu}
\dist(\CS^{(0)},\CS^{(1)})>
\max_{s=0,1}\left(1+2\frac{m_{1-s}(m_k-1)}{m_k^2}\right)\dia (\CS^{(s)}),
\end{equation}
where $m_k$ denotes the sample size of the cluster $\CS^{(s)}$. 

\subsection{Linear and semi-definite programming based convex relaxation}
There are also other types of convex relaxations for  $k$-means model. Awasthi \etaln\cite{awasthi2015relax} proposed both linear programming~(LP) based relaxation and semidefinite programming (SDP) based relaxation for $k$-means model. They rewrote the $k$-means problem as the following optimization problem:
\begin{equation}
\label{eqn:mode:kmeansequilvalent}
\min_{Z\in\BR^{m\times m}} \tr(Z^THZ) \quad \mathrm{s.t.}\quad  \tr(Z)=k,\ \vec{\textbf{1}}^TZ=\vec{\textbf{1}}^T,\ Z_{i,i}>0,\ \mathrm{and}\ Z_{i,j}\in\{0,Z_{i,i}\},
\end{equation} 
where $H$ is a $m\times m$ square matrix whose $(i,j)$-th entry is $\|A_i-A_j\|_2^2$, $\vec{\textbf{1}}$ is the all-ones vector. 
The object function of~\eqref{eqn:mode:kmeansequilvalent} is convex, but the feasible set is  non-convex due to the discrete constrains $Z_{i,j}\in\{0,Z_{i,i}\}$. By relaxing the discrete constrains $Z_{i,j}\in\{0,Z_{i,i}\}$ to the interval constrains $Z_{i,j}\in[0,1]$, it leads to LP relaxation model
\begin{equation}
\label{eqn:model:LP}
\min_{Z\in\BR^{m\times m}} \tr(Z^THZ) \quad \mathrm{s.t.}\quad  \tr(Z)=k,\ \vec{\textbf{1}}^TZ=\vec{\textbf{1}}^T,\ \mathrm{and}\  Z_{i,j}\in[0,1].
\end{equation} 
By relaxing the discrete constrains to  $Z\succeq 0$, it
leads to  the following SDP relaxation model:
\begin{equation}
\label{eqn:model:SDP}
\min_{Z\in\BR^{m\times m}} \tr(Z^THZ) \quad \mathrm{s.t.}\quad  \tr(Z)=k,\ \vec{\textbf{1}}^TZ=\vec{\textbf{1}}^T,\ \mathrm{and}\  Z\succeq 0.
\end{equation} 

For LP and SDP models, the exact clustering conditions are established in probabilistic context, \ie, the exact clustering with a overwhelming probability. 
In \cite{awasthi2015relax,iguchi2015tightness}, the input data matrix $A$ is sampled from the stochastic ball model, see \cref{def:stochastic_ball}.
Denote 
\begin{equation}
\label{eqn:center_distance}
\Delta=\min_{i\neq j} \|\mu_i-\mu_j\|_2
\end{equation}
as the smallest distance between the ball centres. It is shown in \cite{awasthi2015relax}  that the $k$-means LP model can have exact clustering with high probability only in the regime $\Delta\geq 4$. The $k$-means SDP model can have exact clustering  with high probability  in the regime 
$\Delta>2\sqrt{2}(1+\sqrt{1/n})$. A tighter condition is presented in \cite{iguchi2015tightness}, which says that the $k$-means SDP relaxation model can recover the planted clusters in stochastic ball model with high probability, provided with 
\begin{equation}
\label{eqn:SDP_sep}
\Delta >2+\frac{k^2}{n}\mbox{Cond}(\mu),\end{equation} where  
$\mbox{Cond}(\mu)=\max_{i\neq j}\|\mu_i-\mu_j\|_2^2\large/\min_{i\neq j}\|\mu_i-\mu_j\|_2^2$.

\subsection{Motivations and our contributions}
\label{sec:motivations}
In the configuration of only $2$ clusters, 
Zhu \etaln \cite{zhu2014convex} show that the TV relating model \eqref{eqn:convex} can  exactly cluster the input data provided the data satisfies the condition ~\eqref{eqn:zhu}. The condition ~\eqref{eqn:zhu} is much more restrictive than that of most existing approaches. Also it is 
dependent on cluster sizes, which makes it not not suitable for 
the data set with large variance on cluster sizes.  It leads to the impression 
that TV-based  convex relaxation is less likely to succeed than other methods. Such an impression contradicts to the good performance of TV-based convex clustering in practice.

In this paper, the TV relating model for convex clustering is revisited. The aim is to  show  that the theoretical soundness of TV relating convex clustering is not necessarily more restrictive than others, and has its potential in practice.
Motivated by the
performance gain when using a weighted version of the sum-of-$\ell_2$-norm based model  \eqref{eqn:convex2} with Gaussian kernel based scheme \cite{lindsten2011clustering,lindsten2011just,hocking2011clusterpath}, 
we proposed to study the following weighted   total variation based model with the sum-of-$\ell_1$-norm for clustering:
\begin{equation}
\label{eqn:weightedconvex}
\widehat{X}=\mathrm{argmin}_X\|A-X\|^2_F+c\sum_{j=2}^m\sum_{i<j}e^{-r\|A_i-A_{j}\|_2^2}\|X_i-X_j\|_1
\end{equation}
where 
$\sum_{j=2}^m\sum_{i<j}e^{-r\|A_i-A_{j}\|_2^2}\|X_i-X_j\|_1$ is the   weighted total variation of $X$ along columns.

For the optimization model 
\eqref{eqn:weightedconvex}, we first showed that for the data with 
only 2 clusters, its minimizer $\widehat{X}$ can exactly recover the two clusters, as long as the intra-class distance is larger than  inner-class distances and is independent to cluster size. Such an exact clustering condition is much less restrictive than the condition presented in \cite{zhu2014convex}. Furthermore, we also established the exact clustering condition of the model 
\eqref{eqn:weightedconvex} for the data with arbitrary number of clusters. With some mild condition on cluster centers, the same condition is established for the data with any number of clusters.
At last, the experiments are done  several applications which showed the proposed convex clustering model \eqref{eqn:weightedconvex} outperformed several existing well-known techniques, particularly on the datasets with large variance on cluster sizes.

In short,
The results presented in this paper presented a deep theoretical understanding of  total variation  relating convex clustering, but also provided a practical clustering technique which can see  its applications in many problems.

\subsection{Notations and definitions}
\label{sec:notation}
before proceeding the discussion, we present some  notations and definitions which are used in the remainder of the paper. We use capital letters for matrices, \eg matrix $A$, calligraphic capital letters for sets, \eg index set $\mathcal{I}$, bold small letters with a right arrow overset for column vectors, \eg $\vec{\textbf{a}}\in \mathbb{R}^m$, and small letters for constant, \eg $r\in \mathbb{R}^+$. Moreover, $A_p$ denotes the $p$-th row of $A$, $A_{pq}$ denotes the entry at $p$-th row and $q$-th column, and $\vec{\bf{a}}_i$ denotes the $i$-th element of $\vec{\bf{a}}$. $\|\cdot\|_{F}$ and $\|\cdot\|$ are the Frobenious norm and
the operator norm respectively, $\mbox{Tr}(\cdot)$ is the trace of
the matrix. For a vector $\vec{\textbf{a}}$, the $\ell_1,\ell_2$ and $\ell_p$ norms are denoted by $\|\vec{\textbf{a}}\|_1$, $\|\vec{\textbf{a}}\|_2$ and $\|\vec{\textbf{a}}\|_p$, respectively.
	
Given a data matrix $A\in\mathbb{R}^{m\times n}$ sampled from $k$ clusters. Assume there are totally $m_i$ rows of $A$ sampled from the set $\mathcal{S}^{(i)}$, with $0\leq i\leq k-1$. Without loss of generality, let $\mathcal{I}^{(s+1)}=\{m_{s}+1,m_k+2,\dots,m_k+m_{s+1}\}$ denote the indices of sampling in $\mathcal{S}^{(s)}$, for $s=1,2,\dots,k-1$ respectively. And $\mathcal{I}^{(0)}=\{1,2,\dots,m_0\}$.

Now we introduce the total difference operator for notation simplicity. 
\begin{definition}[total difference operator]
	\label{def:total}
	Let $G^i=(\textbf{0}_{(i-1)\times(m-i)},\vec{\textbf{1}}_{(i-1)\times 1}, -I_{(i-1)\times(i-1)})$. The total difference operator of order $m$ is defined by
	$$D^m=[(G^{m})^T, (G^{m-1})^T,\dots, (G^2)^T]^T.$$
	Let \[\mathcal{D}=\{D^mX\ |\ X\in \mathbb{R}^{m\times n}\},\]
	$D^m$ is a linear operator from $\mathbb{R}^{m\times n}$ to $\mathcal{D}$. Moreover, for any $Y\in R^{\binom{m}{2}\times m}$, $Y$ belongs to $\mathcal{D}$ if and only if $HY=0$, where $H=(D^{m-1},I)$.
\end{definition}

For simplicity,  we take $D=D^m$ unless otherwise stated. Then, by the definition of $\mathcal{I}^{(s)}$ and $D$, we define the index set $\mathcal{P}^{(0)}$ representing  the difference of two rows from the same cluster:
	\begin{equation} \label{eqn:P0}
	\mathcal{P}^{(0)}=\{(i-1)(m-1)+(j-i)\ |\ i,j\in \mathcal{I}^{(s)},\ 0\leq s\leq k-1\},
	\end{equation}
	and the index set 
	\begin{equation}\label{eqn:P1}
	\mathcal{P}^{(1)}=\{(i-1)(m-1)+(j-i)\ \ |\ \ i\in \mathcal{I}^{(s)},\ j\in \mathcal{I}^{(l)},\ s<l\}
	\end{equation} representing the row indexes of $DX$ which correspond to the difference of two rows from different clusters.
		Besides, $\sign(x)$ denotes the signum function of a real number $x$:
	\[
	\sign(x)=
	\left\{\begin{array}{ll}
	-1 & \mbox{if} \ x<0,\\
	0 & \mbox{if} \ x=0,\\
	1 & \mbox{if} \ x>0.
	\end{array}
	\right.
	\]

\section{Main results}
\label{sec:result}
Consider $k$ clusters in $\mathbb{R}^{n}$, denoted by $\mathcal{S}^{(s)}, s=0,1,\ldots,k-1$. Let $A\in \mathbb{R}^{m\times n}$ denote the data matrix whose rows are arbitrarily chosen from these $k$ clusters.
\begin{definition}[exact clustering property]
	A matrix $X\in \mathbb{R}^{m\times n}$ is said to have {exact clustering property}, if it satisifies
	\begin{equation}\label{eqn:exact}
	\left\{\begin{array}{ll}
	X_i=X_j, & \mbox{if}\  A_i, A_j\in \CS^{(s)},\\
	X_i\neq X_j, & \mbox{otherwise,}
	\end{array}
	\right.
	\end{equation}
	for any  $1\leq i\neq j\leq m$ and $s=0,1,\dots,k-1$.
\end{definition}

Clearly, given  a  matrix $X$ with exact clustering property, one can correctly group the rows of $X$ into $k$ clusters, by putting the rows with same value into one group, \ie, given a data matrix $A$, a convex model will exactly recover the clusters from $A$ if its minimizer has exact clustering property. Recall that the model \cref{eqn:weightedconvex} reads
\begin{equation}\label{eqn:weightconvex}
\hat{X}=\operatornamewithlimits{argmin}_X\|A-X\|^2_F\\
+c\sum_{i<j}e^{-r\|A_i-A_{j}\|_2^2}\|X_i-X_j\|_1.
\end{equation}

\subsection{Clustering data with $k$ clusters}
Before presenting the main results for the data with $k(\geq 2)$ clusters, we first discuss it in the  $2$-cluster setting for illustrating the main idea.

\begin{theorem}[Exact clustering for $2$ clusters]
	\label{theorem_twobody}
	Let $A\in \mathbb{R}^{m\times n}$ be a data matrix whose rows are arbitrarily sampled from two sets $\mathcal{S}^{(0)}, \mathcal{S}^{(1)}\subset \mathbb{R}^{n}$.
	Suppose that $\mathcal{S}^{(0)}$ and $\mathcal{S}^{(1)}$ satisfy the following separation condition:
	\begin{equation}\label{eqn:sep1}
	\dist(\mathcal{S}^{(0)},\mathcal{S}^{(1)})>\max\{\dia(\mathcal{S}^{(0)}), \dia(\mathcal{S}^{(1)})\}.
	\end{equation}
	Then,  there exist some constants $c,r>0$ such that the minimizer $\widehat X$  of \cref{eqn:weightconvex}
	has exact clustering property \cref{eqn:exact}, \ie for any $1\leq i\neq j\leq m$,
	$$
	\left\{\begin{array}{ll}
	\widehat{X}_i=\widehat{X}_j, & \mbox{if } \{A_i, A_j\}\subset \CS^{(s)} \mbox{ for } 0\leq s\leq 1,\\
	\widehat{X}_i\neq \widehat{X}_j, & \mbox{otherwise.}
	\end{array}
	\right.
	$$
\end{theorem}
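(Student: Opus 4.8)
The plan is to use strict convexity together with a dual-certificate (subgradient) construction. The objective in \eqref{eqn:weightconvex} is the sum of the strictly convex data term $\|A-X\|_F^2$ and the convex weighted $\ell_1$ penalty, so it has a unique minimizer and the first-order subgradient condition is both necessary and sufficient. Hence it suffices to exhibit one point $X^\star$ that (i) has the exact clustering structure \eqref{eqn:exact} and (ii) satisfies the optimality condition; uniqueness then forces $\widehat X=X^\star$. Writing $w_{ij}=e^{-r\|A_i-A_j\|_2^2}>0$, the condition reads, for each row $p$,
\[
2(X^\star_p-A_p)+c\sum_{j\neq p}w_{pj}\,\Lambda_{pj}=0,\qquad \Lambda_{pj}\in\partial\|X^\star_p-X^\star_j\|_1,\quad \Lambda_{pj}=-\Lambda_{jp},
\]
where each coordinate of $\Lambda_{pj}$ equals the $\sign$ of the corresponding coordinate of $X^\star_p-X^\star_j$ when that coordinate is nonzero, and is free in $[-1,1]$ otherwise. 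Since the penalty separates across columns, I would treat each coordinate $q$ independently, writing $a_p=(A_p)_q$, and reassemble at the end.

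For the candidate I set $X^\star_i=U^{(s)}$ for all $i\in\mathcal{I}^{(s)}$, so within-cluster rows coincide by construction. Fixing a coordinate $q$ in which the cluster means differ, say $(\bar A^{(0)})_q>(\bar A^{(1)})_q$, the between-cluster sign entries are forced to $+1$; summing the optimality equations over $\mathcal{I}^{(0)}$ then pins the shrunk value $u_0=(\bar A^{(0)})_q-\tfrac{c}{2m_0}W_{\mathrm{bet}}$, where $W_{\mathrm{bet}}=\sum_{i\in\mathcal{I}^{(0)},\,j\in\mathcal{I}^{(1)}}w_{ij}$, and symmetrically for $u_1$. The only remaining freedom is in the within-cluster multipliers, each of which need only lie in $[-1,1]$ because those pairs have equal values. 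Setting $f_{pj}=w_{pj}\Lambda_{pj}$ and $B_p=\sum_{j\in\mathcal{I}^{(1)}}w_{pj}$, the per-row equations become a prescribed-divergence problem $\sum_{j\in\mathcal{I}^{(s)}}f_{pj}=d_p$ on the complete graph of cluster $s$ with capacities $|f_{pj}|\le w_{pj}$, where $d_p=\tfrac{2(a_p-u_s)}{c}-B_p$ and $\sum_{p\in\mathcal{I}^{(s)}}d_p=0$. By Hoffman's circulation theorem (equivalently max-flow/min-cut), such a feasible flow exists if and only if every cut $T\subset\mathcal{I}^{(s)}$ satisfies $|\sum_{p\in T}d_p|\le\sum_{p\in T,\,j\notin T}w_{pj}$. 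Verifying this cut condition, against capacities that the weighting makes small, is the main obstacle.

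The two requirements push $c$ in opposite directions, and reconciling them is the heart of the argument. Within-cluster collapse needs the cut condition; since $d_p\approx\tfrac{2}{c}\big(a_p-(\bar A^{(s)})_q\big)$ with $|a_p-(\bar A^{(s)})_q|\le\dia(\CS^{(s)})$, while the within-cluster capacities obey $w_{pj}\ge e^{-r\,\dia(\CS^{(s)})^2}$, a routine cut estimate shows feasibility holds once $c$ exceeds a fixed multiple of $\dia(\CS^{(s)})\,e^{r\,\dia(\CS^{(s)})^2}$, so $c$ must be large. Conversely, between-cluster distinctness requires the shrinkage not to close the gap: $u_0-u_1=(\bar A^{(0)}-\bar A^{(1)})_q-\tfrac{c}{2}\big(\tfrac1{m_0}+\tfrac1{m_1}\big)W_{\mathrm{bet}}$ must keep its sign, and since $W_{\mathrm{bet}}\le m_0m_1e^{-r\,\dist(\CS^{(0)},\CS^{(1)})^2}$ this forces $c$ below a multiple of $e^{r\,\dist^2}$, so $c$ must not be too large. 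The separation hypothesis \eqref{eqn:sep1}, $\dist>\max_s\dia(\CS^{(s)})$, is precisely what makes this window nonempty: the admissible interval for $c$ is roughly $[\,C_1\,e^{r\,\dia^2},\,C_2\,e^{r\,\dist^2}\,)$, whose ratio of endpoints is of order $e^{r(\dist^2-\dia^2)}$ and hence exceeds any fixed constant for $r$ large, so a valid pair $(c,r)$ exists.

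Finally, to be sure there is a coordinate carrying a genuine gap, I would first establish that separation forces $\bar A^{(0)}\neq\bar A^{(1)}$. This follows from the decomposition $\tfrac{1}{m_0m_1}\sum_{i\in\mathcal{I}^{(0)},\,j\in\mathcal{I}^{(1)}}\|A_i-A_j\|_2^2=\|\bar A^{(0)}-\bar A^{(1)}\|_2^2+\mathrm{Var}_0+\mathrm{Var}_1$ together with the diameter bound $\mathrm{Var}_s\le\tfrac12\dia(\CS^{(s)})^2$; since the left-hand side is at least $\dist^2$, one obtains $\|\bar A^{(0)}-\bar A^{(1)}\|_2\ge\sqrt{\dist^2-\max_s\dia(\CS^{(s)})^2}>0$, which selects a coordinate $q$ with a strictly positive mean gap and closes the construction. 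In coordinates where the two means agree, the same flow argument applies with all pairs free and each cluster balancing its own demand, so those coordinates collapse harmlessly to a common value, consistent with the exact clustering structure.
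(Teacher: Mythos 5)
Your proposal is correct, and it reaches the conclusion by the same underlying dual-certificate strategy as the paper, but through genuinely different machinery. The paper first centers the data (\cref{thm:lemma1}), passes to the difference variable $Y=DX$ constrained to lie in $\mathcal{D}$ (\cref{thm:lemma2}), and constructs a dual variable $\widehat\Lambda^T\in\mathcal{D}^\perp$ satisfying soft-thresholding conditions (\cref{thm:suf_two,lem:sufficient_two1,lem:sufficient_two2}); the within-cluster part of that certificate is produced by invoking Lemma~3 of Zhu et al., which routes a mean-zero demand through the pairwise-difference structure with a uniform $\ell_\infty$ bound. You instead stay in the primal variable, write the row-wise subgradient conditions directly (legitimate, since strict convexity gives uniqueness and the penalty separates across coordinates), and recast the existence of the within-cluster multipliers as a capacitated circulation problem settled by Hoffman's feasibility criterion. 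This is the same balancing step in disguise --- Zhu's lemma is essentially the uniform-capacity special case of your cut condition --- but your version handles the nonuniform capacities $w_{pj}$ natively and is self-contained modulo citing Hoffman. Both arguments then run on the identical mechanism: between-cluster weights are at most $e^{-r\,\dist(\CS^{(0)},\CS^{(1)})^2}$ while within-cluster capacities are at least $e^{-r\,\dia(\CS^{(s)})^2}$, so the separation hypothesis \cref{eqn:sep1} makes your window for $c$ (lower endpoint of order $e^{r\dia^2}$ from the cut condition, upper endpoint of order $e^{r\dist^2}$ from gap preservation) nonempty for $r$ large, matching the paper's feasible interval $[\underline{\kappa}(r),\overline{\kappa}(r)]$ whose length diverges as $r\to+\infty$. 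One respect in which your write-up is more complete than the paper's: the certificate yields distinct cluster representatives only in coordinates where the cluster means differ, and \cref{theorem_twobody}, unlike \cref{theorem1}, does not assume distinct means, nor does the appendix verify that such a coordinate exists. Your variance decomposition, $\frac{1}{m_0m_1}\sum_{i,j}\|A_i-A_j\|_2^2=\|\bar A^{(0)}-\bar A^{(1)}\|_2^2+\mathrm{Var}_0+\mathrm{Var}_1$ with $\mathrm{Var}_s\le\frac12\dia(\CS^{(s)})^2$, shows separation forces $\|\bar A^{(0)}-\bar A^{(1)}\|_2\ge\bigl(\dist^2-\max_s\dia(\CS^{(s)})^2\bigr)^{1/2}>0$, closing that gap explicitly. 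The only loose end on your side is that the ``routine cut estimate'' is sketched rather than executed; it does go through (using $\sum_p d_p=0$, bound the demand of a cut $T$ by $\min(|T|,m_s-|T|)\bigl(\tfrac{2}{c}\dia(\CS^{(s)})+2m_{1-s}e^{-r\dist^2}\bigr)$ against capacity $|T|(m_s-|T|)e^{-r\dia(\CS^{(s)})^2}$), so this is a matter of writing out details, not a genuine gap.
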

\begin{remark}
Clearly, in the $2$-cluster setting,
our exact clustering condition \eqref{eqn:sep1} is much less restrictive than
the condition \cref{eqn:zhu} for model \eqref{eqn:convex}.
\end{remark}
\begin{remark}[Range of feasible parameters for $2$-cluster]
As shown in the proofs of  \cref{theorem_twobody}, the model  \cref{eqn:weightconvex} guarantees exact clustering, if two parameters $c, r$ are chosen such that
	$$
	r\geq \max \limits_{0\leq i\leq 1}{\ln\left({4\frac{m-m_i}{m_i}}\right)}{(d^2-l_i^2)^{-1}},
	$$
	where 
	\[d=\dist(\mathcal{S}^{(0)},\mathcal{S}^{(1)})\quad \mbox{and}\quad  l_i=\dia{(\mathcal{S}^{(i)})},\]
	and 
	\[
	\underline{\kappa}(r)\leq c\leq \overline{\kappa}(r).
	\]
	The upper bound $\overline{\kappa}(r)$ of $c$ is given by
		\[
		\min_{p\in\mathcal{P}^{(1)}}\left\{\min_{q\in\mathcal{Q}}\frac{2|\tau_q|}{|{m}(\rho-\gamma_p)|},\min_{q\in\mathcal{Q}}\frac{2|\tau_q|}{m\rho}\right\},
		\]
		and the lower bound $\underline{\kappa}(r)$ of $c$ is given by
		\[
		\max_{0\leq i\leq 1,p\in{\mathcal{P}}^{(0)}}\left\{\frac{\epsilon_i\dia(\mathcal{S}^{(i)})}{\gamma_p-\frac{4(m-m_i)}{m_i}\max\limits_{i\in\mathcal{P}^{(1)}}(\gamma_i)}\right\},
		\]
where  $\tau_q=\frac{1}{m_0m_1}\sum_{i\in\mathcal{P}^{(1)}}(DA)_{iq}$,
		$\epsilon_i=\frac{8(m-m_i)(m_i-1)+4m_i^2}{mm_i^2}$, $\mathcal{Q}=\{q|\tau_q\neq 0\}$ and $\rho=\frac{\sum_{i\in\mathcal{P}^{(1)}}\gamma_i}{m_0m_1}$.

From the explicit form of $\overline\kappa(r)$ and $\underline\kappa(r)$, the length of feasible interval $\Delta \kappa(r)=\overline{\kappa}(r)-\underline{\kappa}(r)$ has the property that $\lim_{r\rightarrow +\infty}\Delta \kappa(r)=+\infty$. See more details in \cref{sec:prf of two-cluster}. Thus, the range of proper parameters is wide, as long as $r$ is chosen sufficiently large.
\end{remark}

The results in the $2$-cluster setting cannot be applied to  
the data with $k(\geq$3) clusters. The main difference is that when there are 3 or more clusters, there is a hidden constraint among different clusters. Taking $3$ clusters in one-dimensional as an example, denote $\hat{x}_{i}$ as the mean value of cluster $\mathcal{S}^{(i)}$, for $i=0,1,2$. Considering the mean value difference between clusters as $\hat{y}^{(i,j)}=\hat{x}_{i}-\hat{x}_{j}$, they satisfy 
\begin{equation}\label{eqn:hidconstraint}
\hat{y}^{(0,1)}+\hat{y}^{(1,2)}=\hat{y}^{(0,2)}.
\end{equation}
 See \cref{fig:1D3B} for  illustration.
\begin{figure}
	\centering
	\includegraphics[height=0.16\textwidth]{./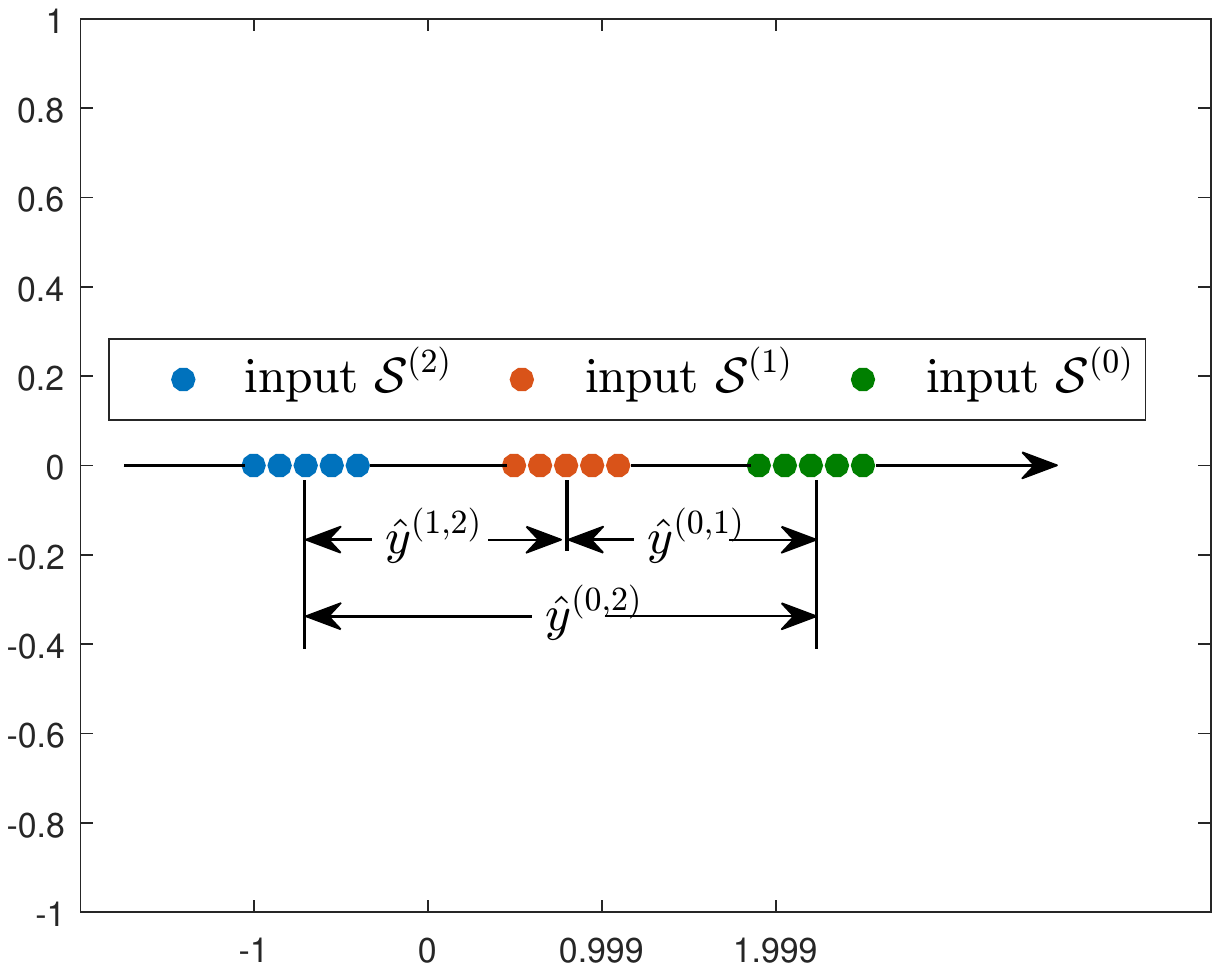}\\
	\caption{Example: three one-dimensional clusters $\{\mathcal{S}^{(i)}\}_{i=0,1,2}$.}
	\label{fig:1D3B}
\end{figure}

 With additional mild conditions on cluster centers,  we can have the similar result for the data with $k(\geq3)$
clusters. Let $m_i$ and $A_c^{(i)}$ denote the size and the mean  value of cluster $\mathcal{S}^{(i)}$, respectively, \ie $m_i=\sum_{x\in \mathcal{S}^{(i)}}1$ and $A_c^{(i)}=\frac{1}{m_i}\sum_{x\in \mathcal{S}^{(i)}}x$. Then, we have

\begin{theorem}[Exact clustering for $k$ clusters]
	\label{theorem1} 
	Let $A\in \mathbb{R}^{m\times n}$ be a data matrix whose rows are arbitrarily sampled from $k$ sets $\{\mathcal{S}^{(i)}\}_{i=0}^{k-1}\subset \mathbb{R}^{n}$. Suppose that  the cluster means $\{A^{(i)}_c\}_{i=0}^{k-1}$ are distinct in all dimensions, \ie $(A^{(i)}_c)_p\neq (A^{(j)}_c)_p$ for all $1\leq p\leq n$ and $0\leq i<j\leq k-1$. Suppose that $\{\mathcal{S}^{(i)}\}_{i=0}^{k-1}$ satisfy the following separation condition:
	\begin{equation}\label{eqn:sep}
	\min_{i\neq j}\dist(\mathcal{S}^{(i)},\mathcal{S}^{(j)})>\max_{0\leq s\leq k-1}\dia (\mathcal{S}^{(s)}).
	\end{equation}
	Then,  there exist  $c,r>0$ such that the minimizer $\widehat X$  of \cref{eqn:weightconvex} has exact clustering property.\end{theorem}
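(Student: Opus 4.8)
The plan is to exploit a feature special to the $\ell_1$-based model. Because $\|X_i-X_j\|_1=\sum_{q=1}^n|X_{iq}-X_{jq}|$ and $\|A-X\|_F^2=\sum_{q=1}^n\sum_i(A_{iq}-X_{iq})^2$, the objective of \cref{eqn:weightconvex} separates \emph{completely} across the $n$ columns, the Gaussian weights $w_{ij}=e^{-r\|A_i-A_j\|_2^2}$ being constants that do not depend on $X$. Minimizing over $X$ thus reduces to solving $n$ independent one-dimensional weighted total-variation problems, one per coordinate $q$, with data $A_{\cdot q}$ and the \emph{shared} weights $\{w_{ij}\}$. Since the fidelity term is strictly convex, each one-dimensional problem has a unique minimizer, so it suffices to produce, for every $q$, a candidate vector that (i) is constant on each index set $\mathcal{I}^{(s)}$, (ii) takes distinct values on distinct clusters, and (iii) satisfies the first-order subgradient optimality condition; assembling these columns then yields a matrix with exact clustering property \cref{eqn:exact}. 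A pleasant by-product of this reduction is that the hidden additivity constraint \cref{eqn:hidconstraint} distinguishing $k\ge 3$ from the two-cluster case is \emph{automatically} respected: within a single coordinate we assign genuine real values $v_s^{(q)}$ to the clusters, so their pairwise differences are consistent with a total order by construction.

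Fix a coordinate $q$ and write $a_i=A_{iq}$, $v_s=v_s^{(q)}$, and $\sigma_{s,l}=\sign\!\big((A^{(s)}_c)_q-(A^{(l)}_c)_q\big)$, which is well defined by the assumption that the means are distinct in coordinate $q$. The subgradient optimality condition for each row $i\in\mathcal{I}^{(s)}$ reads
\[
2(v_s-a_i)+c\sum_{j\notin\mathcal{I}^{(s)}}w_{ij}\,\sigma_{s,l(j)}+c\sum_{j\in\mathcal{I}^{(s)},\,j\ne i}w_{ij}\,g_{ij}=0,
\]
where $l(j)$ is the cluster of row $j$, the cross-cluster signs are \emph{forced}, and the within-cluster multipliers $g_{ij}=-g_{ji}\in[-1,1]$ remain free. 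Summing over $i\in\mathcal{I}^{(s)}$ makes the within-cluster terms cancel by antisymmetry and, since $\tfrac1{m_s}\sum_{i\in\mathcal{I}^{(s)}}a_i=(A^{(s)}_c)_q$, pins down the cluster value
\[
v_s=(A^{(s)}_c)_q-\frac{c}{2m_s}\sum_{i\in\mathcal{I}^{(s)}}\sum_{j\notin\mathcal{I}^{(s)}}w_{ij}\,\sigma_{s,l(j)},
\]
an $O(c)$ correction away from the cluster mean (so the forced signs are indeed stable for small $c$). Substituting $v_s$ back leaves, for each cluster separately, a linear system in the free within-cluster multipliers whose right-hand sides sum to zero.

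The crux is to show this residual system is feasible with all $g_{ij}\in[-1,1]$. On the complete graph over $\mathcal{I}^{(s)}$ with edge capacities $c\,w_{ij}$ this is a balanced feasible-flow problem, solvable precisely when a Hoffman-type cut condition holds. Here each residual is bounded by $2\,\dia(\CS^{(s)})$ plus an $O(c)$ cross-cluster term, while every within-cluster weight obeys $w_{ij}\ge e^{-r\,\dia(\CS^{(s)})^2}$ and every cross-cluster weight obeys $w_{ij}\le e^{-r\,\dist(\CS^{(s)},\CS^{(l)})^2}$; crucially, although the one-dimensional projections $A_{\cdot q}$ of different clusters may overlap, the weights are computed from the \emph{full}-dimensional distances and therefore still separate within- from cross-cluster pairs. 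The separation condition \cref{eqn:sep} now enters decisively: since $\dist>\dia$, the ratio of cross- to within-cluster weights decays to $0$ as $r\to+\infty$. One then selects $c$ in a window $[\underline\kappa(r),\overline\kappa(r)]$ whose lower bound $\sim\dia\,e^{r\,\dia^2}$ guarantees enough edge budget to realize the residuals (keeping every $g_{ij}\in[-1,1]$) and whose upper bound $\sim e^{r\,\dist^2}$ keeps the $O(c)$ correction below half the mean gap, so the $v_s$ stay distinct in coordinate $q$ and the strict cross-cluster inequality holds; because $\dist>\dia$ this window is nonempty and widens as $r$ grows, exactly as in the two-cluster analysis of \cref{theorem_twobody}.

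I expect the main obstacle to be establishing feasibility of the within-cluster multiplier systems \emph{simultaneously} for all clusters and all coordinates, i.e. exhibiting a single pair $(c,r)$ for which $\underline\kappa(r)\le\overline\kappa(r)$ uniformly over the $n$ columns. The per-coordinate construction sidesteps the $k\ge 3$ consistency issue, but one must still verify the flow/cut condition quantitatively on each complete cluster graph and control the coupling between the two ends of the $c$-window; this is where the diameters, the separation, the cluster sizes $m_s$, and the exponential weight bounds must be balanced, mirroring the explicit expressions for $\overline\kappa(r)$ and $\underline\kappa(r)$ recorded in the two-cluster remark.
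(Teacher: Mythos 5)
Your proposal is correct, and it takes a genuinely different route from the paper's proof. The paper never argues in the primal variables: it reformulates the model through the total difference operator (\cref{thm:lemma2}), so the unknown becomes $Y=DX$ constrained to the range space $\mathcal{D}$, and exact clustering is certified by constructing an explicit dual variable $\widehat\Lambda^T\in\mathcal{D}^\perp$. In that difference parametrization the additivity relation \cref{eqn:hidconstraint} does \emph{not} come for free: it must be imposed as the extra condition \cref{eqn:cond3}, and the paper enforces it via the correction term $H_q$ built into the construction \cref{eqn:mu_pq}; the within-cluster dual entries are then obtained from the bounded-solution lemma of \cite{zhu2014convex} applied to the linear systems \cref{formula25,formula261,formula26}. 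You avoid all of this by staying in the primal and exploiting the complete separability of the objective across coordinates: parametrizing the candidate minimizer by per-cluster scalar values makes additivity automatic, the cross-cluster subgradients are forced by the distinct-means assumption, and the within-cluster multipliers become a balanced antisymmetric-flow feasibility problem on the complete cluster graph with capacities $cw_{ij}$, settled by a Hoffman/Gale cut condition --- which is essentially the same combinatorial fact that Lemma 3 of \cite{zhu2014convex} supplies to the paper, in network-flow clothing. The quantitative endgame is identical in both arguments: the separation condition \cref{eqn:sep} forces the cross-to-within weight ratio to vanish as $r\to+\infty$, the distinct-means assumption keeps the per-coordinate gaps (your forced signs $\sigma_{s,l}$, the paper's $\tau_q^{s,l}$) bounded away from zero, and the $c$-window opens up for $r$ large, exactly as in \cref{eqn:existc1}. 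Note also that the obstacle you flag at the end --- simultaneity over clusters and coordinates --- is not a real one: the flow residuals and the capacities are controlled by the full-dimensional diameters and distances, which are independent of $q$, and the only coordinate-dependent quantities are the finitely many mean gaps, so one takes the maximum of the lower bounds and the minimum of the upper bounds over finitely many constraints, and this window still widens as $r\to+\infty$. What your route buys is a cleaner, more elementary treatment of $k\geq 3$ (no $H_q$ correction, no $\mathcal{D}^\perp$ machinery, no separate two-cluster versus $k$-cluster certificate); what the paper's route buys is the explicit recorded parameter ranges $\underline{\kappa}'(r),\overline{\kappa}'(r)$ and direct continuity with the dual-certificate framework of \cite{zhu2014convex}.
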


\begin{remark}[Range of feasible parameters]\label{remark1} As shown in the proofs of  \cref{theorem1}, the model  \cref{eqn:weightconvex} guarantees exact clustering, if two parameters $c, r$ are chosen such that
	$$
	r\geq \max \limits_{0\leq i\leq k-1}{\ln\left({4\frac{m-m_i}{m_i}}\right)}{(d^2-l_i^2)^{-1}},
	$$
	where 
	\[d=\max_{i\neq j}\dist(\mathcal{S}^{(i)},\mathcal{S}^{(j)})\quad \mbox{and}\quad  l_i=\dia{(\mathcal{S}^{(i)})},\]
	and 
	\[
	\underline{\kappa}'(r)\leq c\leq \overline{\kappa}'(r).
	\]
	The upper and lower bound of $c$ is similar as $2$-cluster case, where 
		\[
		\overline{\kappa}'(r)=\min_{s<l,1\leq q\leq n}\left\{\frac{|\tau^{s,l}_q|}{3m\max\limits_{j\in\mathcal{P}^{(1)}}(\gamma_j)}\right\}
		\]
		and
		\[
		\underline{\kappa}'(r)=\max_{0\leq i\leq k-1,p\in \mathcal{P}^{(0)}}\left\{\frac{\epsilon_i\dia(\mathcal{S}^{(i)})}{\gamma_p-\frac{4(m-m_i)}{m_i}\max\limits_{j\in \mathcal{P}^{(1)}}(\gamma_j)}\right\}
		\]
				
		It is also easy to show that the feasible interval of $c$, $[\underline{\kappa}'(r),\overline{\kappa}'(r)]$ has the property that $\lim_{r\rightarrow +\infty}\overline{\kappa}'(r)-\underline{\kappa}'(r)=+\infty$; see more details in \cref{sec:prf of three-cluster}. 
\end{remark}

\subsection{Clustering data drawn from stochastic distributions}

The results in \cref{theorem1} can also be applied to the data drawn from specific stochastic distributions. For example, consider the data drawn from stochastic ball model as below.
\begin{definition}[$(\mathcal{B},\mu, m)$-stochastic ball model]
\label{def:stochastic_ball}
Let $\{\mu_i\}_{i=1}^s$ be ball centers in $\mathbb{R}^n$. For each $i$, draw i.i.d. vectors $\{r_{i,j}\}_{j=1}^{m/s}$ from some rotation-invariant distribution $\mathcal{B}$ supported on the unit ball. The points from cluster $i$ are then taken to be $x_{i,j}=r_{i,j}+\mu_{i}$.
\end{definition}

Then, we have
\begin{theorem}\label{thm:ball_model}
Let $A\in \mathbb{R}^{m\times n}$ be a data matrix drawn from stochastic ball model. Suppose that the ball centers $\mu_i(0\leq i\leq k-1)$ satisfy the separation condition $\min_{i\neq j}\|\mu_i-\mu_j\|\geq 4$, there exist  $c,r>0$ such that the minimizer $\widehat X$  of \cref{eqn:weightconvex} has exact clustering property \cref{eqn:exact}.\end{theorem}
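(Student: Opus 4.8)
The plan is to deduce \cref{thm:ball_model} from \cref{theorem1}: I would show that, for a data matrix $A$ drawn from the $(\mathcal{B},\mu,m)$-stochastic ball model with $\min_{i\neq j}\|\mu_i-\mu_j\|\geq 4$, the two hypotheses of \cref{theorem1}---the separation condition \eqref{eqn:sep} and the distinctness of the cluster means in every coordinate---hold almost surely. \cref{theorem1} then supplies the constants $c,r>0$ for which the minimizer $\widehat X$ of \cref{eqn:weightconvex} has the exact clustering property.

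The geometric estimates come first. Each point of cluster $i$ has the form $x_{i,j}=r_{i,j}+\mu_i$ with $\|r_{i,j}\|_2\leq 1$, so $\mathcal{S}^{(i)}$ lies in the closed unit ball about $\mu_i$; consequently $\dia(\mathcal{S}^{(i)})\leq 2$ for every $i$. For $i\neq j$ and any $x\in\mathcal{S}^{(i)}$, $y\in\mathcal{S}^{(j)}$, the triangle inequality gives $\|x-y\|_2\geq \|\mu_i-\mu_j\|-\|x-\mu_i\|_2-\|y-\mu_j\|_2\geq \|\mu_i-\mu_j\|-2\geq 2$, whence $\dist(\mathcal{S}^{(i)},\mathcal{S}^{(j)})\geq 2$. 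These deterministic bounds give the non-strict chain $\min_{i\neq j}\dist(\mathcal{S}^{(i)},\mathcal{S}^{(j)})\geq 2\geq\max_{s}\dia(\mathcal{S}^{(s)})$.

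To obtain the strict inequality required by \eqref{eqn:sep}, I would sharpen the diameter bound by a measure-zero argument. For two points of the unit ball, $\|x_{i,j}-x_{i,j'}\|_2=2$ forces $r_{i,j}$ and $r_{i,j'}$ to be antipodal points of the unit sphere; since $\mathcal{B}$ is rotation invariant its only possible atom is at the origin, so the event that an independent pair of draws is antipodal on the sphere has probability zero. A union bound over the finitely many within-cluster pairs then yields $\max_{s}\dia(\mathcal{S}^{(s)})<2$ almost surely, and combined with $\min_{i\neq j}\dist\geq 2$ this gives $\min_{i\neq j}\dist(\mathcal{S}^{(i)},\mathcal{S}^{(j)})>\max_{s}\dia(\mathcal{S}^{(s)})$, that is, \eqref{eqn:sep}. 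For the cluster means, write $A^{(i)}_c=\mu_i+\frac{1}{m_i}\sum_j r_{i,j}$; for fixed $(i,j,p)$ the equality $(A^{(i)}_c)_p=(A^{(j)}_c)_p$ is a single nontrivial linear constraint on the (non-degenerate, continuously distributed) perturbations and hence has probability zero, so another union bound over the finitely many triples shows the means are almost surely distinct in all coordinates. With both hypotheses verified, \cref{theorem1} completes the argument.

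I expect the one genuinely delicate issue to be that the centre separation $\|\mu_i-\mu_j\|\geq 4$ is exactly critical: it yields only the non-strict bounds $\dia\leq 2$ and $\dist\geq 2$, with no deterministic slack left, so the strict separation demanded by \eqref{eqn:sep} must be won back from the almost-sure exclusion of the extremal antipodal/boundary-aligned configurations. This is also why the conclusion is naturally probabilistic; a purely deterministic version would require either strengthening the hypothesis to $\min_{i\neq j}\|\mu_i-\mu_j\|>4$ together with $\mathcal{B}$ supported in the open unit ball, or otherwise excluding the degenerate distributions (such as a point mass at a centre) for which the means need not separate.
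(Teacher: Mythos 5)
Your proposal is correct and follows essentially the same route as the paper: both deduce the result from \cref{theorem1} by checking that, under the stochastic ball model with $\min_{i\neq j}\|\mu_i-\mu_j\|\geq 4$, the separation condition \eqref{eqn:sep} and the coordinate-wise distinctness of the cluster means hold with probability one. The paper's own proof is a single sentence asserting these facts from continuity of the marginals, whereas you additionally supply the step it glosses over --- recovering the strict inequality in \eqref{eqn:sep} at the critical separation $\Delta=4$ by almost-surely excluding antipodal pairs --- and correctly flag the degenerate rotation-invariant distributions (atoms, e.g.\ point mass at the centre, or the one-dimensional case) under which the hypotheses can genuinely fail.
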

\begin{proof}
Since the marginal distribution of $r_{i,j}$ in each dimension is continuous with non-zero variance, the sample means inside each cluster  are distinct with probability $1$ and the sufficient condition on exact recovery of model \cref{eqn:weightconvex} becomes $\min_{i\neq j}\|\mu_i-\mu_j\|\geq 4$.
\end{proof}

Different from the exact clustering condition of $k$-means LP relaxation in \cite{awasthi2015relax}, the exact clustering condition of the TV relating model \eqref{eqn:weightconvex} does not impose any assumption on  sample size and distribution shape.
Such a condition allows it can be extended to the data drawn from other stochastic distributions. For example, data drawn from mixture of Gaussian. 

\begin{definition}[$(\omega, \mu,\Sigma)$-mixture of Gaussian model]
A mixture of $k$ Gaussian distributions is a distribution in which each sample is drawn from $\mathcal{N}(\mu_{i},\Sigma_{i})$
with probability $\omega_{i}$, where $\sum_i \omega_{i}=1$. 
\end{definition}
Then, we have 
\begin{theorem}
\label{thm:GaussianMixture}
Let $A\in \mathbb{R}^{m\times n}$ be a data matrix whose rows are arbitrarily sampled from mixture of $(\omega, \mu,\Sigma)$-mixture of Gaussian model with $\mu_i(0\leq i\leq k-1)$ distinct in each dimension. Suppose the separation condition is 
\begin{eqnarray*}
\min_{i\neq j}\|\mu_{i}-\mu_{j}\|_{2} & > & \max_{i\neq j}\left(\|\Sigma_{s,l}\|^{\frac{1}{2}}\sqrt{12\log m}+{(12\log m)}^{1/4}\|\Sigma_{s,l}\|_{F}^{1/2}\right)\\
 &  & +{\max}_{i}\sqrt{\mbox{\mbox{Tr}}(\Sigma_{i})+\|\Sigma_{i}\|_{F}\sqrt{12\log m}+6\|\Sigma_{i}\|\log m},
\end{eqnarray*}
where $\Sigma_{i,j}=\Sigma_{i}+\Sigma_{j}$.
Then there exist some constants $c,r>0$ such that the minimizer $\widehat{X}$
of \cref{eqn:weightconvex} has exact clustering property
with probability larger than $1-3/m$.
\end{theorem}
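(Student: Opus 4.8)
The plan is to deduce this from the deterministic guarantee of \cref{theorem1}: it suffices to exhibit an event of probability at least $1-3/m$ on which the realized sample sets $\{\mathcal{S}^{(i)}\}_{i=0}^{k-1}$ satisfy both hypotheses of \cref{theorem1}, after which \cref{theorem1} itself supplies parameters $c,r>0$ rendering $\widehat X$ exactly clustering. The first hypothesis---coordinatewise distinctness of the sample means---requires no work: each sample mean $A_c^{(i)}$ is an absolutely continuous Gaussian vector with covariance $\Sigma_i/m_i$, so $(A_c^{(i)})_p-(A_c^{(j)})_p$ is a nondegenerate Gaussian and vanishes with probability $0$, and a union over the finitely many pairs $(i,j)$ and coordinates $p$ keeps this a probability-one event, exactly as in the proof of \cref{thm:ball_model}. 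Everything therefore reduces to certifying the separation condition \cref{eqn:sep}, \ie $\min_{i\neq j}\dist(\mathcal{S}^{(i)},\mathcal{S}^{(j)})>\max_s\dia(\mathcal{S}^{(s)})$, with probability at least $1-3/m$.

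For that I would prove two concentration estimates and take a union bound. Writing $A_i=\mu_{s}+g_i$ with $g_i\sim\mathcal{N}(0,\Sigma_{s})$ when $A_i\in\mathcal{S}^{(s)}$, the within-cluster spread is controlled by the Gaussian quadratic-form (Laurent--Massart) tail: with $t=3\log m$,
\[
\mathbb{P}\!\left(\|g_i\|_2^2\ge \mathrm{Tr}(\Sigma_{s})+\|\Sigma_{s}\|_F\sqrt{12\log m}+6\|\Sigma_{s}\|\log m\right)\le m^{-3},
\]
so on the complement of this bad event every radius $\|A_i-\mu_{s}\|_2$ lies below the square root appearing as the second term of the separation hypothesis. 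For the between-cluster distances, with $A_i\in\mathcal{S}^{(s)}$ and $A_j\in\mathcal{S}^{(l)}$, $s\ne l$, I write $A_i-A_j=\delta+z$ with $\delta=\mu_{s}-\mu_{l}$ and $z\sim\mathcal{N}(0,\Sigma_{s,l})$, and lower-bound
\[
\|A_i-A_j\|_2^2=\|\delta\|_2^2+2\delta^{\top}z+\|z\|_2^2\ge \|\delta\|_2^2+2\delta^{\top}z.
\]
The decisive point is that, since $\|\delta\|_2$ is large, the relevant fluctuation is governed by the \emph{scalar} Gaussian $\delta^{\top}z\sim\mathcal{N}(0,\delta^{\top}\Sigma_{s,l}\delta)$, whose variance is at most $\|\Sigma_{s,l}\|\,\|\delta\|_2^2$, rather than by the much larger $\|z\|_2\approx\sqrt{\mathrm{Tr}(\Sigma_{s,l})}$; a Gaussian tail on $\delta^{\top}z$ together with control of the residual quadratic term yields exactly the cross term $\|\Sigma_{s,l}\|^{1/2}\sqrt{12\log m}+(12\log m)^{1/4}\|\Sigma_{s,l}\|_F^{1/2}$.

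Combining the two estimates on the intersection of their complements, every between-cluster distance is at least $\min_{i\ne j}\|\mu_i-\mu_j\|_2$ minus the cross term, while every within-cluster radius is at most the radius term, so the diameters $\dia(\mathcal{S}^{(s)})$ are controlled; the assumed separation inequality then forces \cref{eqn:sep}. The probability accounting is a union bound over the at most $\binom{m}{2}$ within- and between-cluster pairs: choosing the tail exponents large enough (\eg $t=3\log m$ for the radii and $t=6\log m$ for the scalar fluctuations) that each per-pair failure probability is $O(m^{-3})$ makes the total failure probability $O(m^{-1})$, and tracking constants over the few distinct families of bad events yields the stated $1-3/m$.

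The step I expect to be the main obstacle is making the between-cluster estimate sharp enough to collapse to the two stated terms. The naive bound $\|A_i-A_j\|_2\ge\|\delta\|_2-\|z\|_2$ throws away a full $\sqrt{\mathrm{Tr}(\Sigma_{s,l})}$ and would force a far more restrictive separation; the entire gain comes from exploiting that the first-order fluctuation of $\|A_i-A_j\|_2$ about $\|\delta\|_2$ is one-dimensional and scales with $\|\Sigma_{s,l}\|^{1/2}$ rather than $\sqrt{\mathrm{Tr}(\Sigma_{s,l})}$. Carrying this out cleanly---bounding the residual $\|z\|_2^2$, converting its quadratic-form tail into the $(12\log m)^{1/4}\|\Sigma_{s,l}\|_F^{1/2}$ contribution via $\sqrt{a+b}\le\sqrt a+\sqrt b$, and keeping all constants aligned with the radius term so that the final comparison reduces precisely to the displayed inequality---is the delicate part; by contrast the union-bound bookkeeping that preserves the $1-3/m$ budget is routine.
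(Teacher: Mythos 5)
Your proposal has the same skeleton as the paper's proof: reduce to the deterministic guarantee of \cref{theorem1} by (i) noting the sample means are coordinatewise distinct with probability one and (ii) certifying the separation condition \cref{eqn:sep} with probability at least $1-3/m$ via Laurent--Massart quadratic-form tails, a scalar Gaussian tail for the cross term, and a union bound; this is exactly what the paper does through \cref{main lemma} and \cref{prop: Main proposition}. However, your between-cluster step, as displayed, has a genuine gap. You write $\|A_i-A_j\|_2^2\geq\|\delta\|_2^2+2\delta^{\top}z$, discarding $\|z\|_2^2$, and then claim the $(12\log m)^{1/4}\|\Sigma_{s,l}\|_F^{1/2}$ term will come from ``control of the residual quadratic term''---these two statements are incompatible: once $\|z\|_2^2$ is dropped, no Frobenius fluctuation can appear. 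If you pursue the scalar-tail-only route, the sufficient condition it produces has the form $\|\delta\|_2\geq 2\|\Sigma_{s,l}\|^{1/2}\sqrt{2t}+C$ ($C$ being the diameter bound, $e^{-t}$ the per-pair failure probability, so $t\geq 3\log m$ for the union bound), i.e.\ a spectral coefficient of at least $\sqrt{24\log m}$---and with your choice $t=6\log m$, fully $2\sqrt{12\log m}$. This is strictly larger than the stated $\sqrt{12\log m}$, and the hypothesis's extra slack $(12\log m)^{1/4}\|\Sigma_{s,l}\|_F^{1/2}$ cannot make up the difference: for rank-one $\Sigma_{s,l}$ that slack is of order $(\log m)^{1/4}\|\Sigma_{s,l}\|^{1/2}$, which is asymptotically negligible against the missing $(\sqrt{2}-1)\|\Sigma_{s,l}\|^{1/2}\sqrt{12\log m}$. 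So the stated hypothesis does not imply what this route requires, and the proof fails in that regime. The correct version---which your prose gestures at and which the paper executes in \cref{prop: Main proposition}---keeps $\|z\|_2^2$, lower bounds it by $\mathrm{Tr}(\Sigma_{s,l})-2\|\Sigma_{s,l}\|_F\sqrt{t}$ via the lower tail of \cref{main lemma}, and only discards the nonnegative trace in the final comparison; that surviving $-2\|\Sigma_{s,l}\|_F\sqrt{t}$ penalty is the sole source of the stated Frobenius term. You must commit to this version.

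A second, smaller mismatch: your within-cluster estimate bounds the radii $\|A_i-\mu_s\|_2$, so the diameter you can certify is $2\sqrt{\mathrm{Tr}(\Sigma_s)+\|\Sigma_s\|_F\sqrt{12\log m}+6\|\Sigma_s\|\log m}$---twice the term appearing in the theorem's hypothesis, which contains no room for that factor. The paper instead applies the upper tail of \cref{main lemma} directly to the pairwise differences $A_{k_1}-A_{k_2}\sim\mathcal{N}(0,2\Sigma_i)$. (To be fair, the paper's own bookkeeping there silently drops the factor coming from the covariance being $2\Sigma_i$ rather than $\Sigma_i$, a loss of $\sqrt{2}$; but the pairwise route is the one consistent with the stated constants up to that $\sqrt{2}$, whereas the radius route loses a full factor of $2$.) Your probability accounting---per-pair failure $O(m^{-3})$, union over $O(m^2)$ pairs, total $O(1/m)$---is fine and matches the paper's.
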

\begin{remark}
Denote $\sigma_{\max}=\max_i\|\Sigma_i\|^{1/2}$. The separation condition on exact recovery of mixture of $k$ spherical Gaussian distribution becomes $\Omega(\sigma_{\max}\sqrt{n})$. 

In order to eliminate the effect of dimension $n$, partition the sample matrix $A$ uniformly into two submatrices $A_1\in \mathbb{R}^{m/2\times n}$ and $A_2\in \mathbb{R}^{m/2\times n}$. Apply weighted convex model \cref{eqn:weightconvex} to $P_{A_1}(A_2)$ and $P_{A_2}(A_1)$, where $P_X$ is a rank-$k$ projection of matrix $X$, \ie for any other matrix $Y$ of rank at most $k$, we have $\|X-P_XX\|\leq \|X-Y\|$. By the similar arguments as in \cite{AW05}, the clusters can be recovered with high probability provided with the separation condition $\Omega(\sigma_{\max}\frac{1}{\sqrt{\omega_{\min}}}+\sqrt{k\log m})$ and $\omega_{\min}=\min_i \omega_k$. The condition is not more restrictive than the existing separation condition of Gaussian mixture model; see \cite{AR2010,AS12}.
\end{remark}

\subsection{Connections to other related works on convex clustering}
\label{sec:comparison}
TV-based convex model with sum-of-$\ell_2$-norm \eqref{eqn:convex2} is studied in \cite{zhu2014convex}. However,
the exact clustering condition established in \cite{zhu2014convex}
is very restrictive and limited. It says that when the data contains only two clusters, it can be exactly clustered by the convex model \eqref{eqn:convex2} provided that the condition~\eqref{eqn:zhu} is satisfied. It can be seen that the condition~\eqref{eqn:zhu} requires the intra-class distance is at least three times of the inner-class distance, and it increases when the size ratio of two classes increases. In contrast,
the paper proposed another type TV-based convex model with weighted sum-of-$\ell_1$-norm \eqref{eqn:weightconvex}. It is shown in this paper that the exact clustering condition 
\eqref{eqn:sep1} for the
model \eqref{eqn:weightconvex}
is much weaker than the condition~\eqref{eqn:zhu}. Not only the ratio between intra-class distance and inner-class distance is reduced from $3$ to $1$, but also it is independent to cluster sizes. Furthermore, we also established the exact clustering condition for the data with $k\geq 3$ clusters, which requires all inner-class distances is smaller than any intra-class distance and cluster centers are distinct along each dimension. 

Moreover, the exact clustering condition \eqref{eqn:sep} is also less restricted than that for many existing clustering methods. Particularly, it is independent of cluster size and the number of clusters, which makes it very attractive in terms of scalability 
and un-balanced data sets. In the next, we give a more detailed discussion on exact clustering conditions of other convex relaxation of $k$-means method.

The exact clustering condition for LP-based  and SLP-based convex relaxations  are established in probability context, \ie,  exact clustering with an overwhelming probability. 
In \cite{awasthi2015relax,iguchi2015tightness}, the input data matrix $A$ is sampled from the stochastic ball model, see \ref{def:stochastic_ball}. Using notation~\eqref{eqn:center_distance}, our exact clustering condition in \cref{thm:ball_model} can be rewritten as 
\begin{equation}
\label{eqn:sep_unitball}
\Delta\geq 4.
\end{equation}
In contrast, It is shown in \cite{awasthi2015relax}  that the $k$-means LP can do exact clustering with high probability in the regime $\Delta\geq 4$, only  when all clusters of of same size. 
Such a condition makes its applicability much more limited than ours, as ours is applicable to the clusters with different sises. 
Another exact clustering condition in probability context is also given in \cite{awasthi2015relax} for the regime 
$\Delta>2\sqrt{2}(1+\sqrt{1/n}).$ 
This condition is dependent on feature dimension and is weaker than condition $\cref{eqn:sep_unitball}$ when the feature dimension is small.

It is shown in \cite{iguchi2015tightness} that the $k$-means SDP relaxation \eqref{eqn:model:SDP} can exactly recovers the planted clusters in stochastic ball model with high probability, provided the  condition~\eqref{eqn:SDP_sep} is satisfied.
This  conditions is slightly weaker than the condition \eqref{eqn:sep_unitball} for certain cases. However, when the maximal distance between centers is large and feature dimension is small, \ie $\text{Cond}(\mu)$ is large while $n$ is small, the condition in \cite{iguchi2015tightness} will be weaker than condition \eqref{eqn:sep_unitball}. In addition, the exact clustering condition \cref{eqn:SDP_sep} is dependent of the number of clusters \cref{eqn:SDP_sep}, while our condition \eqref{eqn:sep_unitball} is independent of the cluster number $k$,
which makes it more scalable. 

In short,  for the LP and SDP based convex cluster models, the  condition for exact clustering with high probability established 
is applicable to the data set with high feature dimension and sufficiently large sample sizes. It is also dependent of the number of clusters and cluster sizes.  In contrast,
 the condition \cref{eqn:sep} for the proposed weighted TV convex model  does not impose any statistical properties of the data, and  is independent of cluster sizes and number of clusters. Therefore, in terms of theoretical soundness, the weighted TV convex model is more  appealing to the data set with unbalanced cluster size or with many clusters. At last, the proposed weighted TV convex model can be applied to the data drawn from 
  mixture of Gaussians, which is not applicable to LP and SDP based approaches.
%

\subsection{Numerical solver and clustering procedure}
The problem \cref{eqn:weightconvex} is a strictly convex problem with at most one local minimiser which is also global minimizer. There are many  efficient numerical solvers for solve \cref{eqn:weightconvex}, \eg the ADMM method \cite{chi2014convex} and the AMA method~\cite{chi2014splitting}.
In this paper, we use the ADMM method to solve \cref{eqn:weightconvex}. Define the augmented lagrangian function as
\[
L_\nu(X,Z,\Lambda)=\frac{1}{2}\|A-X\|_F^2+c\sum_{l\in\varepsilon}\omega_{l}\|Z_{l}\|_1+\\
\sum_{l\in \varepsilon}\langle\Lambda_l,Z_l-X_{l_1}+X_{l_2}\rangle+\frac{\nu}{2}\sum_{l\in\varepsilon}\|Z_{l}-X_{l_1}+X_{l_2}\|^2_2,
\]
where $l=(l_1,l_2)$ with $l_1<l_2$, and $\varepsilon=\{l=(l_1,l_2):\omega_l>0\}$ is the index set corresponding to non-zero weights\footnote{For efficiency, the implementation only uses weights on the distances of each points and  its $s(=5)$ nearest neighbors.}.
See \cref{alg:admm} for the outline of the algorithm. 
\begin{algorithm}
	\caption{The ADMM method}
	\label{alg:admm}
	\begin{algorithmic}
		\STATE {\bfseries Input:} data $A$, regularization $c>0$, weights $\{\omega_{i,j}\}>0$.
		\STATE Initialize $X^{0}=\mathbf{0}, \Gamma^0=\mathbf{0}, Z^0=\mathbf{0}$, $s=0$.
		\STATE Set positive tuning constant $\nu>0$.
		\REPEAT
		\STATE $X^{s+1}=\arg\min_{X}L_\nu(X,Z^s,\Lambda^s)$
		\STATE $Z^{s+1}=\arg\min_{Z}L_\nu(X^{s+1},Z,\Lambda^s)$
		\STATE $\Lambda_{l}^{s+1}= \Lambda^{s}_{l}+\nu(V_{l}-X^{s+1}_{l_1}+X^{s+1}_{l_2})$
		\STATE $s=s+1$
		\UNTIL{$\|X^{s}-X^{s-1}\|_2\leq 10^{-4}$ is satisfied}
		\STATE {\bfseries Return:} $X$ and $Z$
	\end{algorithmic}
\end{algorithm}
The algorithm stops when iteration increment  is less than $10^{-4}$.
Once we obtain the solution $\widehat X$ to \cref{eqn:weightconvex} via  \cref{alg:admm}, the clusters are constructed using $\widehat X$ as the replacements of the original ones.
Two observations, $\widehat X_i,\widehat X_j$, belong to the same cluster, as long as  $\|\widehat X_i-\widehat X_j\|_2$ is below a pre-defined threshold (\eg $10^{-8}$).  As the number of clusters monotonically decreases when the value of regularization parameter $c$ increases  \cite{hocking2011clusterpath}, the number of the clusters is set by using different values of  $c$.

\section{Experiments}
\label{sec:experiments}
In this section, the performance of the weighted TV convex clustering model~\ref{eqn:weightconvex} is evaluated in both synthesized and real data sets.

\subsection{Experimental evaluation on synthesized data}
In this section, the performance of the model \cref{eqn:weightconvex} is evaluated when it is used  on the synthesized datasets that do not
satisfy the exact clustering condition stated in \cref{theorem_twobody} and \cref{theorem1}. As model \cref{eqn:weightconvex} can be viewed
as a convex relaxation of the $k$-means method, we focus on the comparison between the model \cref{eqn:weightconvex} and  two popular  implementations
of the $k$-means method, namely Lloyd's algorithm and \emph{$k$-means$++$}.

\subsubsection{Clustering data drawn from mixture of Gaussian.} Dataset of Mixture of Gaussian  is generated by randomly sampling several multivariate normal distributions~(MVN)  $\mathcal{N}(\mu_{i},\sigma^2 I_{n\times n})$, where  $\mu_i$ denotes the mean and $\sigma^2$ denotes the variance. In this experiments, {the dimension of points $n$ is $100$ and the dataset size $m$ is $30$.} Three clusters  corresponding to different distributions with the same cluster size
are generated with $\vec{\bf{\mu}}_1=0\vec{\bf1}_n$, $\vec{\bf\mu}_2= 3\vec{\bf1}_n$, $\vec{\bf\mu}_3= -3 \vec{\bf1}_n$,
where $\vec{\bf1}_n$ denotes the all-ones vector. Two values of variance are tested: $\sigma=1,2$.
The parameter $r$ is set to $r=0.02(2\sigma^2+5\sigma)$. See  \cref{fig:gaussian} for the average Rand index over $100$ runs.
It can be seen that model \cref{eqn:weightconvex} are slightly better in terms of accuracy and noticeably more stable than two $k$-means algorithms.

\begin{figure}
	\centering
	\subfloat[$\sigma=1$]{\label{fig:circle_input}{\includegraphics[width=0.5\textwidth]{./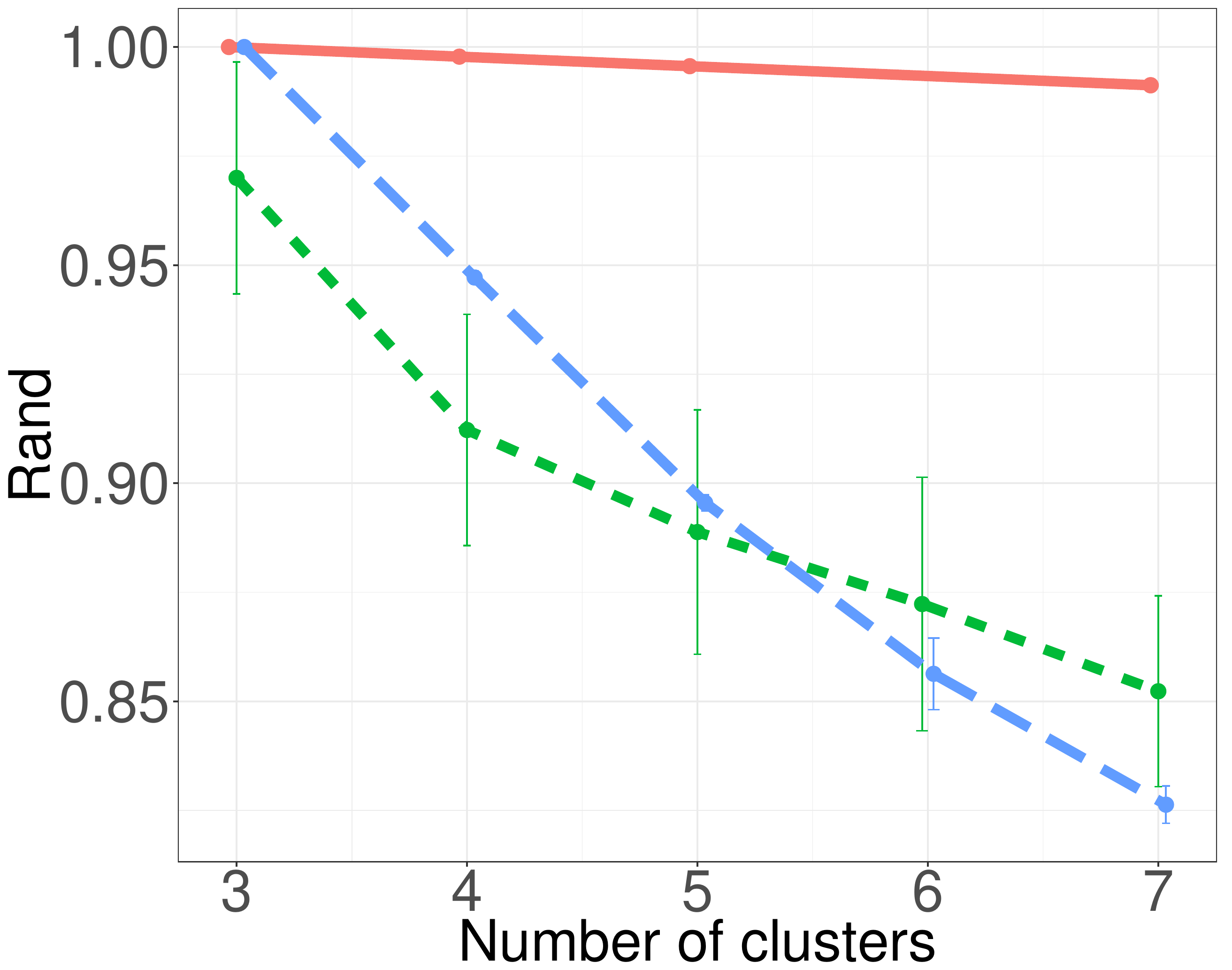}}}
	\subfloat[$\sigma=2$]{\label{fig:circle_kmeans}{\includegraphics[width=0.5\textwidth]{./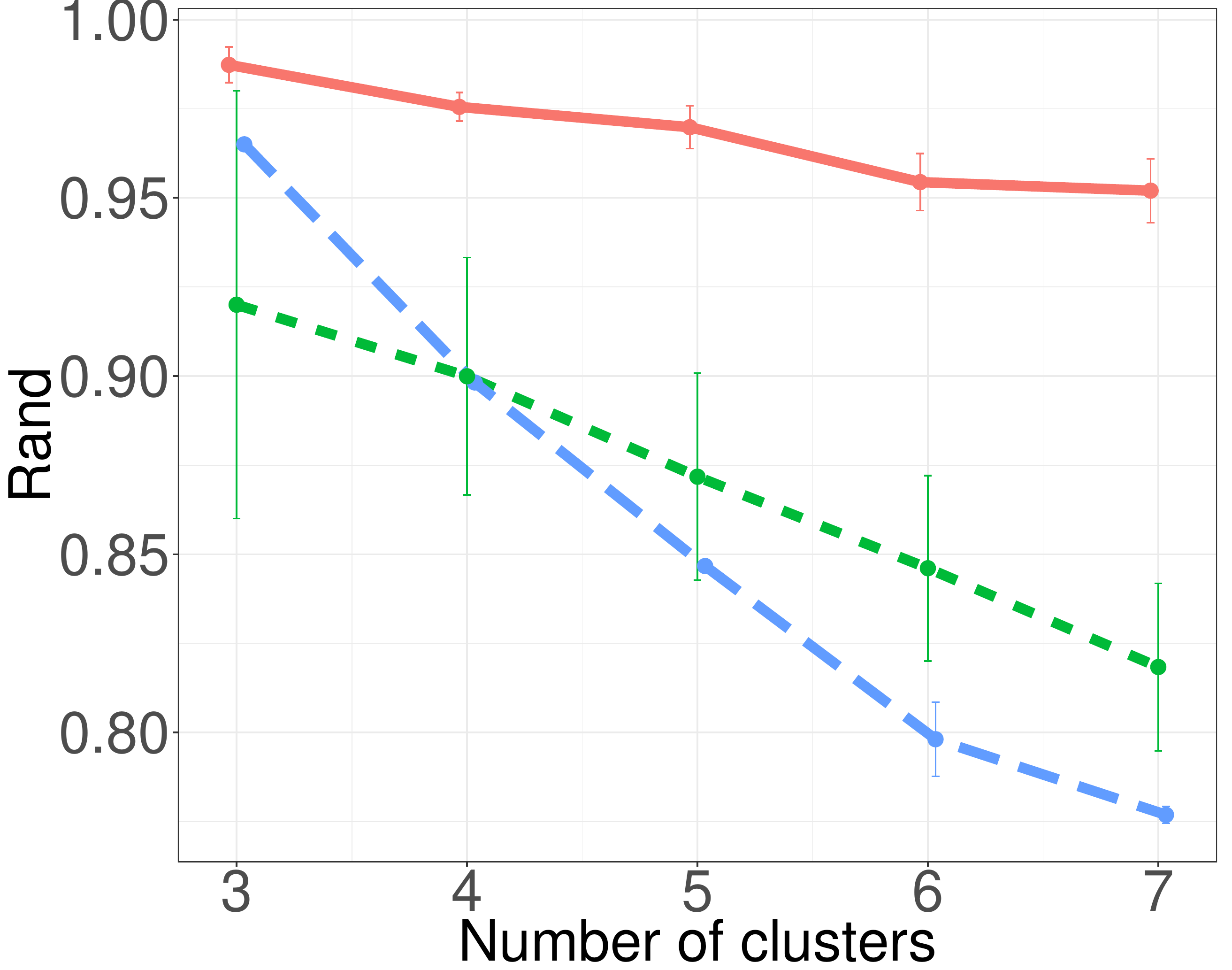}}}
	\caption{Comparison of average Rand index of different methods for clustering mixture of Gaussians over 200 runs. The correspondence
			between plot and method is: Model \cref{eqn:weightconvex}~({\protect\hwplotA}), Lloyd's algorithm~({\protect\hwplotH}), $k$-means++~({\protect\hwplotG}).}
	\label{fig:gaussian}
\end{figure}

\noindent{\bf Clustering non-convex clusters}.\quad
In this experiment,  the performance of the proposed model 
is evaluated on a classic two-body non-convex clustering problem: two embedded circle clusters~\cite{ng2001spectral}. See \cref{fig:results_circle}(a) for the visualization of such two clusters. It is  well-known  that the
$k$-means methods do not perform well on  such a dataset, as it is about minimizing the cluster variances (see \eg \cite{xu2005survey}.
The dataset is generated as follows.
One cluster (blue points) is given by  randomly sampling $250$ points from a $2$-dimensional  MVN $\mathcal{N}(0,I_{2\times 2})$ under Cartesian coordinate system. The other cluster (red
points) is generated by  randomly sampling $250$ points under polar coordinate system, in which radials follow normal distribution $\mathcal{N}(5,0.25^2)$ and angulars follow uniform distribution $\mathcal{U}(0,2\pi)$.
	
As the $k$-means method is sensitive to initialization, the mean and standard deviation~(SD) of the two $k$-means methods are reported over 100 randomly chosen initializations.    In the experiment, we repeated $k$-means algorithm $100$ times, and calculated the mean and standard deviation~(SD) of all Rand index. See \cref{tab:example} for the comparisons among three methods, and see \cref{fig:results_circle} for the illustration of one instance. The result from the model \cref{eqn:weightconvex} is much better than that from two $k$-means algorithms.

\begin{table}
	\caption{\label{tab:example}Rand indexes on clustering non-convex clusters }
	\centering
	\begin{tabular}{|l|*{5}{c|}}\hline
			Method & Lloyd's alg. & $k$-means++ & Model \cref{eqn:weightconvex} \\ \hline
			Mean & 0.517 &0.491& 0.996 \\ \hline
			SD & 0.012 & 0.010 & { 0}  \\ \hline
	\end{tabular}
\end{table}

\begin{figure}
		\centering
		\subfloat[input]{{\includegraphics[width=0.4\textwidth]{./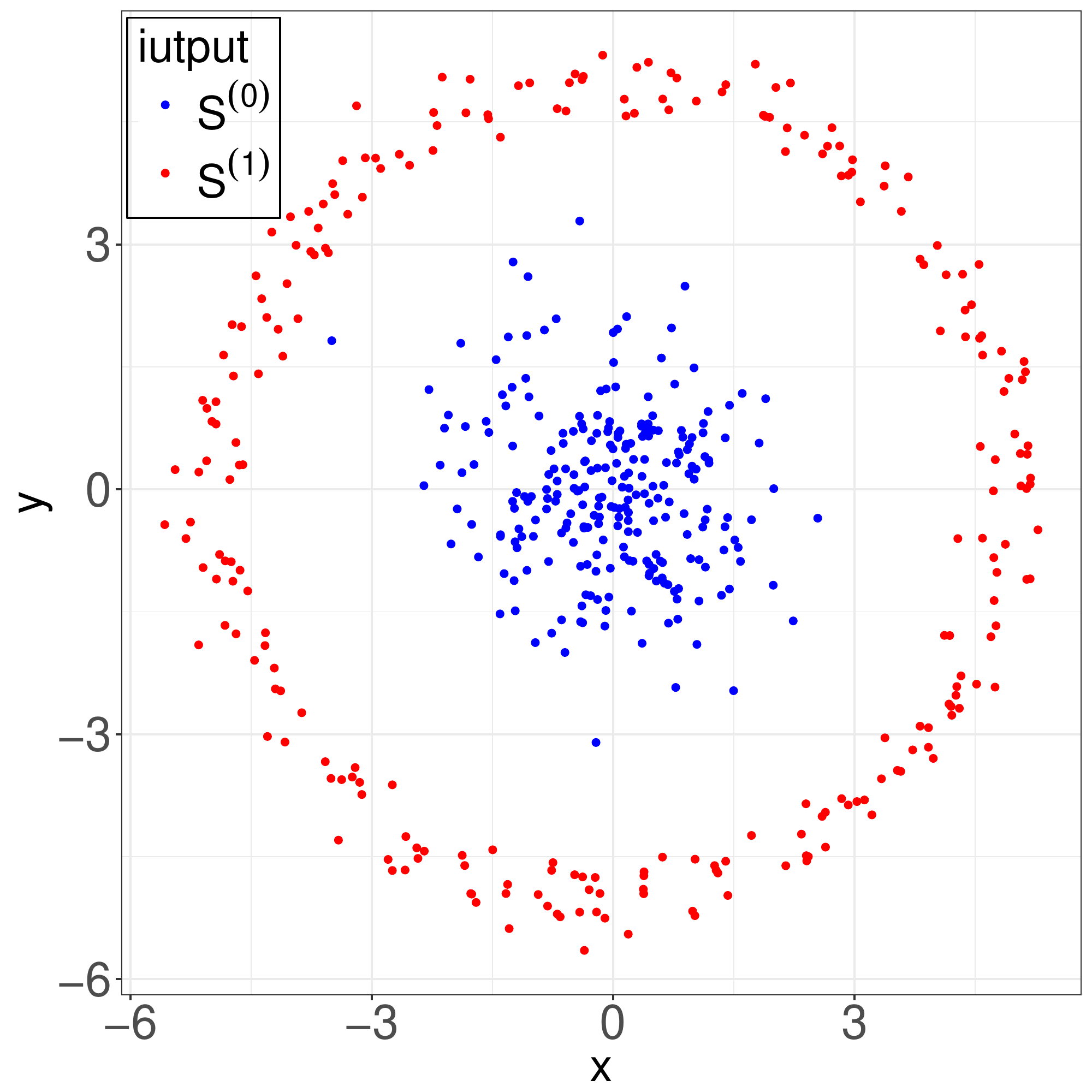}}}
		\subfloat[Lloyd's algorithm]{{\includegraphics[width=0.4\textwidth]{./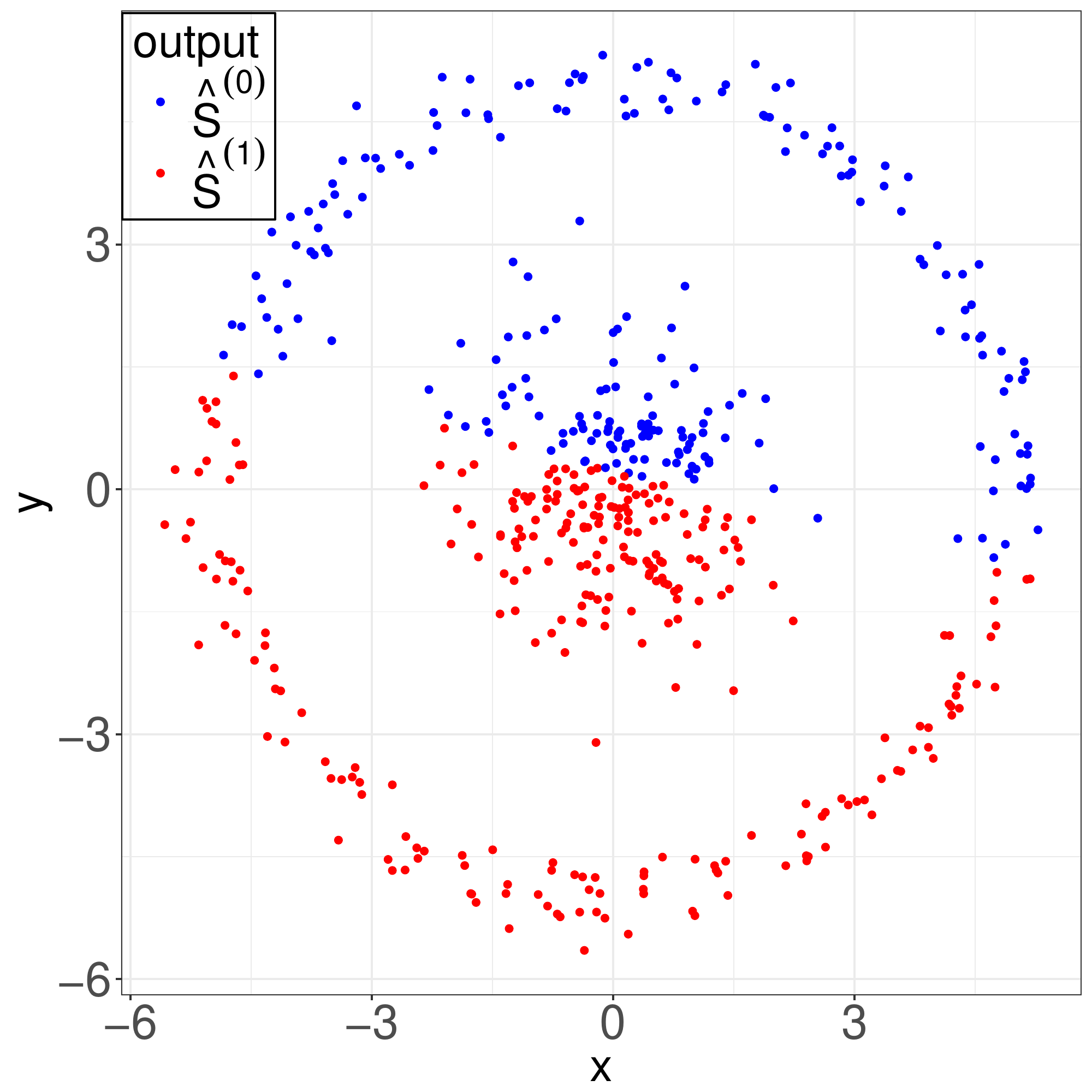}}} \\
		\subfloat[$k$-means++]{{\includegraphics[width=0.4\textwidth]{./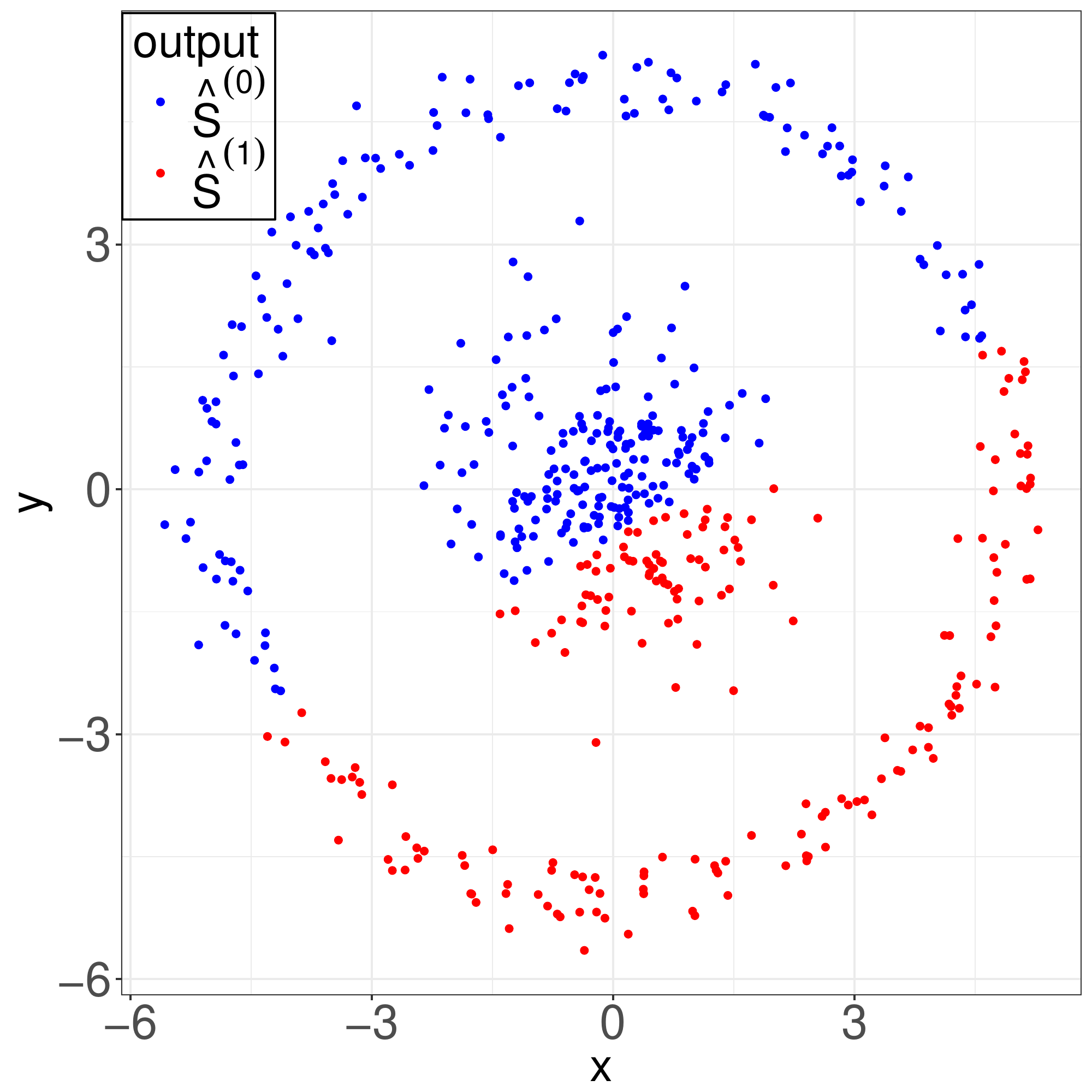}}}
		\subfloat[model \cref{eqn:weightconvex}]{{\includegraphics[width=0.4\textwidth]{./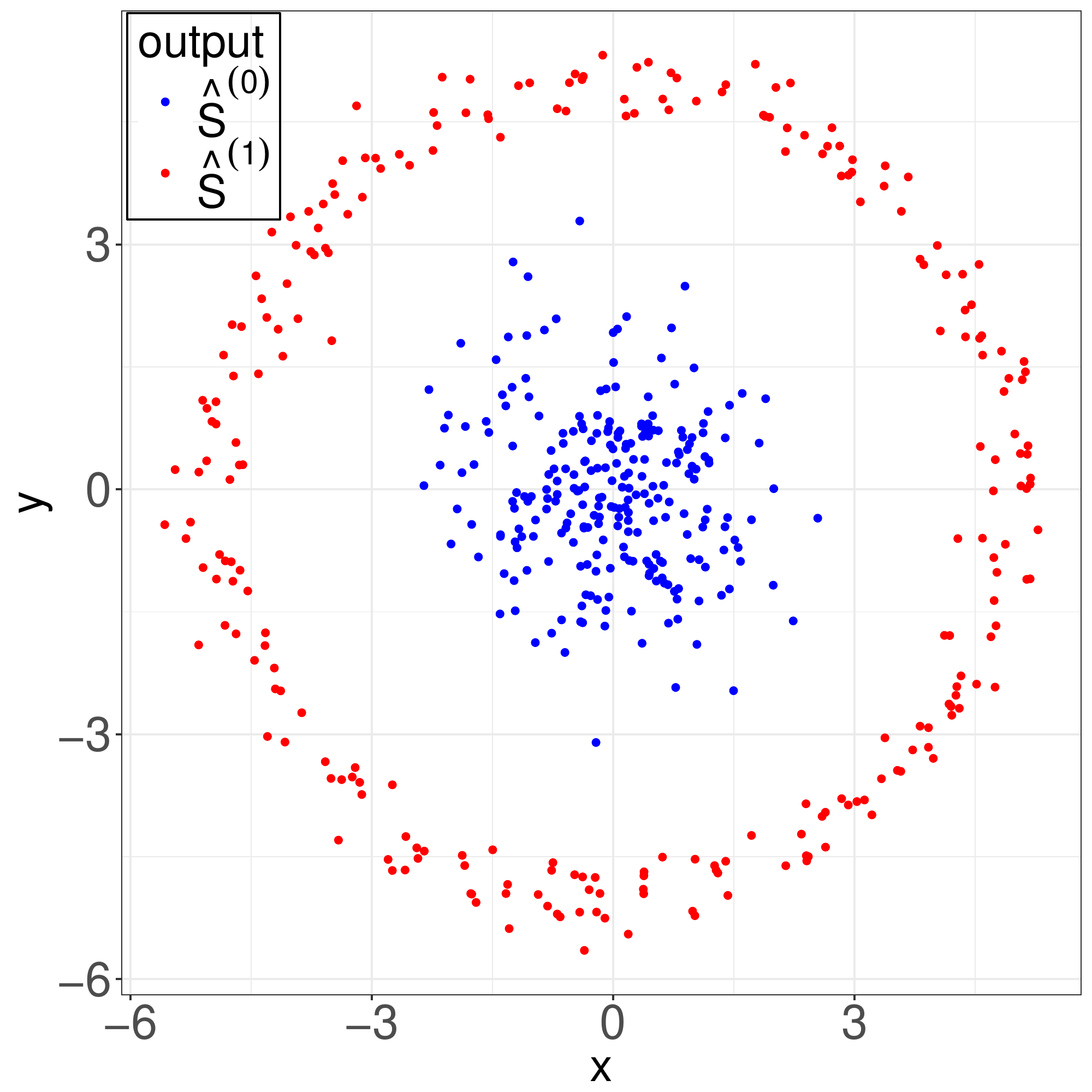}}}
		\caption{Illustrations of clustering non-convex clusters.}
		\label{fig:results_circle}
\end{figure}

\subsection{Experimental evaluation on real data}
In this section, we compared the clustering performance of model \cref{eqn:weightconvex} with several other methods on  several real data sets. The methods for comparison include two $k$-means algorithm: (1) Lloyd' $k$-means algorithm, (2) $k$-means++, and two hierarchical algorithms: (3) hierarchical single linkage (HC(Sin)), and (4) hierarchical average linkage (HC(Ave)). 
Through the experiments, the so-called \emph{Rand} index is used to evaluate clustering performance; see ~\cite{hubert1985comparing} for more details. It ranges between $0$ and $1$, and larger Rand index  implies better agreement with the true clusters.
The results from Model \cref{eqn:weightconvex} and two linkage methods
are reported from a single pass, and the results from two $k$-means algorithms are reported by taking the average of the outcomes over $100$ tests (each test takes the largest Rand index over $10$ randomly chosen initializations).

 Four datasets, from  UCI machine learning repository~\cite{Lichman:2013,martinez1998ar} are tested in the experiments
 
 \begin{enumerate}
 	\item "Iris": a classic testing dataset  for statistical classification, which contains 3 clusters with average 50 instances in $\BR^4$  each, where each cluster refers to a type of iris plant.
 	\item "AR face": it contains 2 clusters (male and female), each cluster contains about 2,000 color images corresponding to 126 people's faces. For computationally efficiency, each image is projected to a vector of length 100 by random matrix projection. The Rand index corresponding to AR face dataset is reported by taking the average of the outcomes over 50 tests (300 images are randomly drawn from each cluster for each test)
 	\item "Libras movement": it contains 15 classes of 24 instances in $\BR^{90}$ each, where each class references to a hand movement type in Libras. 
 	\item "Leaf": it contains 36 clusters with average 10 instances in $\BR^{14}$ each, where each cluster refers to one kind of leaf specimen. 
 \end{enumerate}
 
See \cref{fig:results_1}--\cref{fig:results_2} for the comparison of Rand indexes of the five methods. It can be seen that, in general, Model \cref{eqn:weightconvex} is the top performer among all compared methods, and its performance is consistent on different datasets with different characters.
Particularly, it significantly outperforms the others on AR dataset, which has much larger cluster size and much higher dimension than the others.
It indicates that the convex relaxation \cref{eqn:weightconvex} of $k$-means method has its advantages when processing the high-dimensional dataset with large cluster size, which is known to be challenging for $k$-means algorithms (see e.g \cite{garcia1999robustness}).
 
 \begin{figure}
	\centering
	\subfloat[Iris]{{\includegraphics[width=0.5\textwidth]{./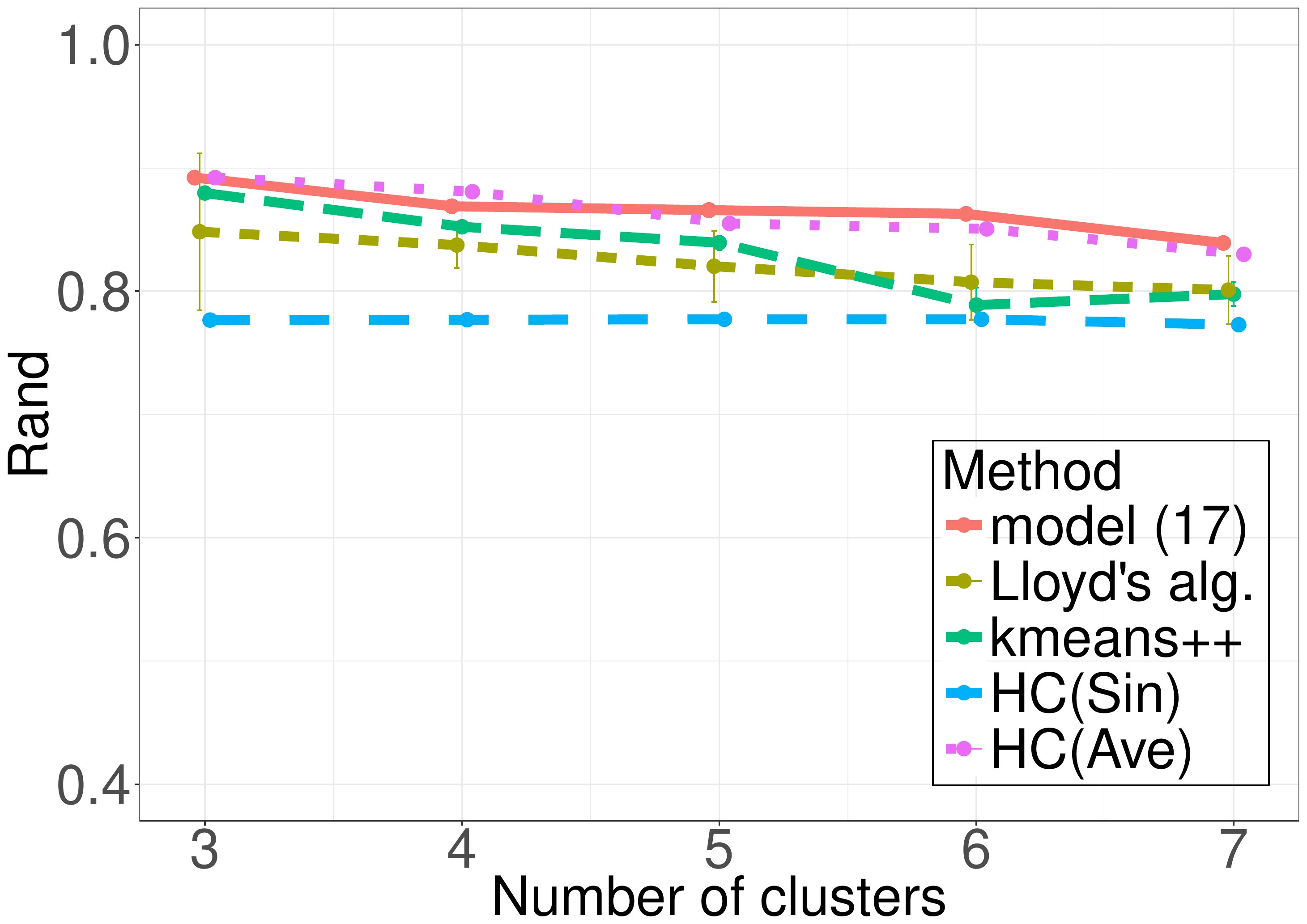}}}
	\subfloat[AR face]{{\includegraphics[width=0.5\textwidth]{./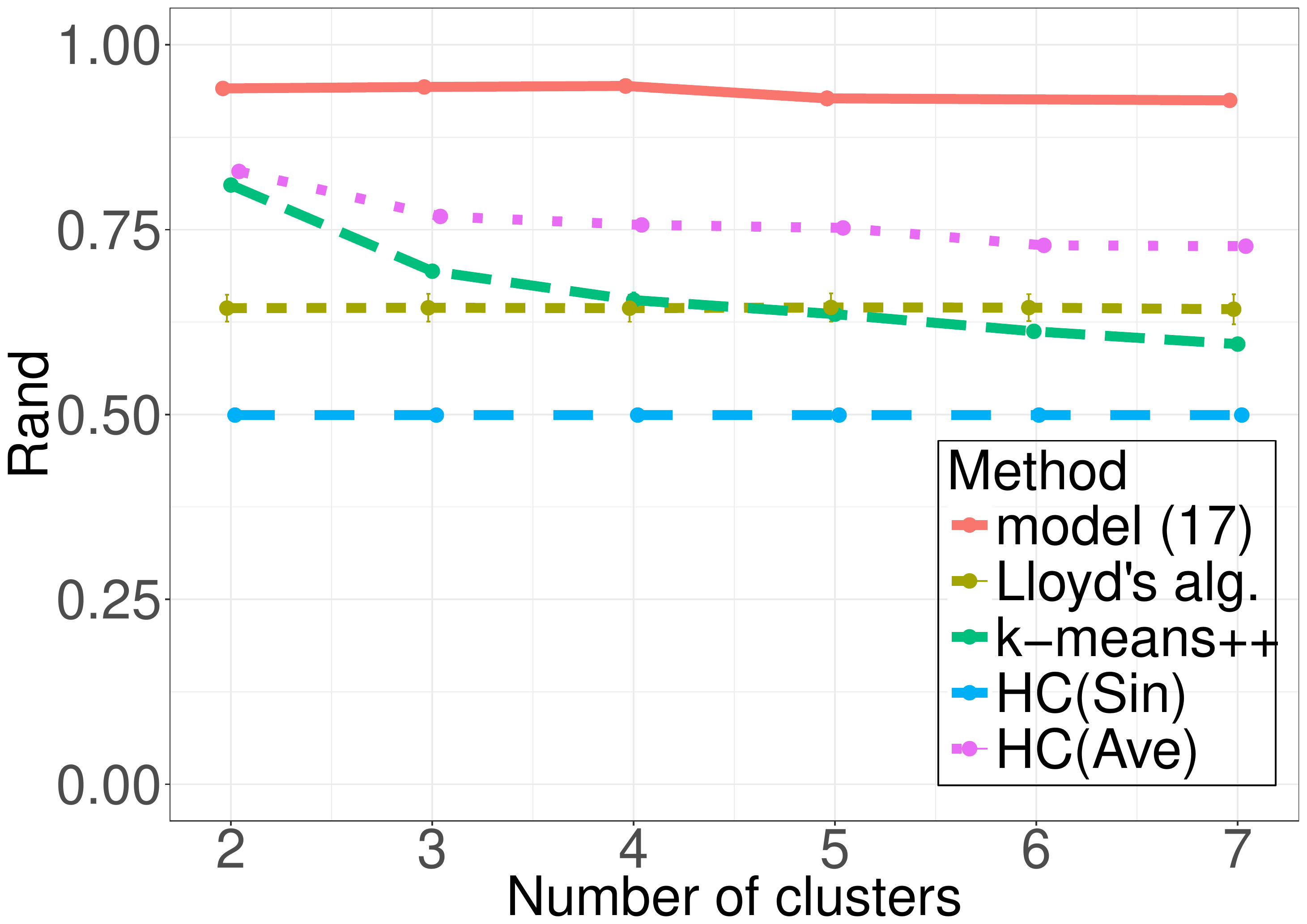}}}
	\caption{Comparison of Rand indexes among different methods on "iris" dataset and "AR" face dataset.
	}
	\label{fig:results_1}
\end{figure}

 \begin{figure}
	\centering
	\subfloat[Libras Movement]{{\includegraphics[width=0.5\textwidth]{./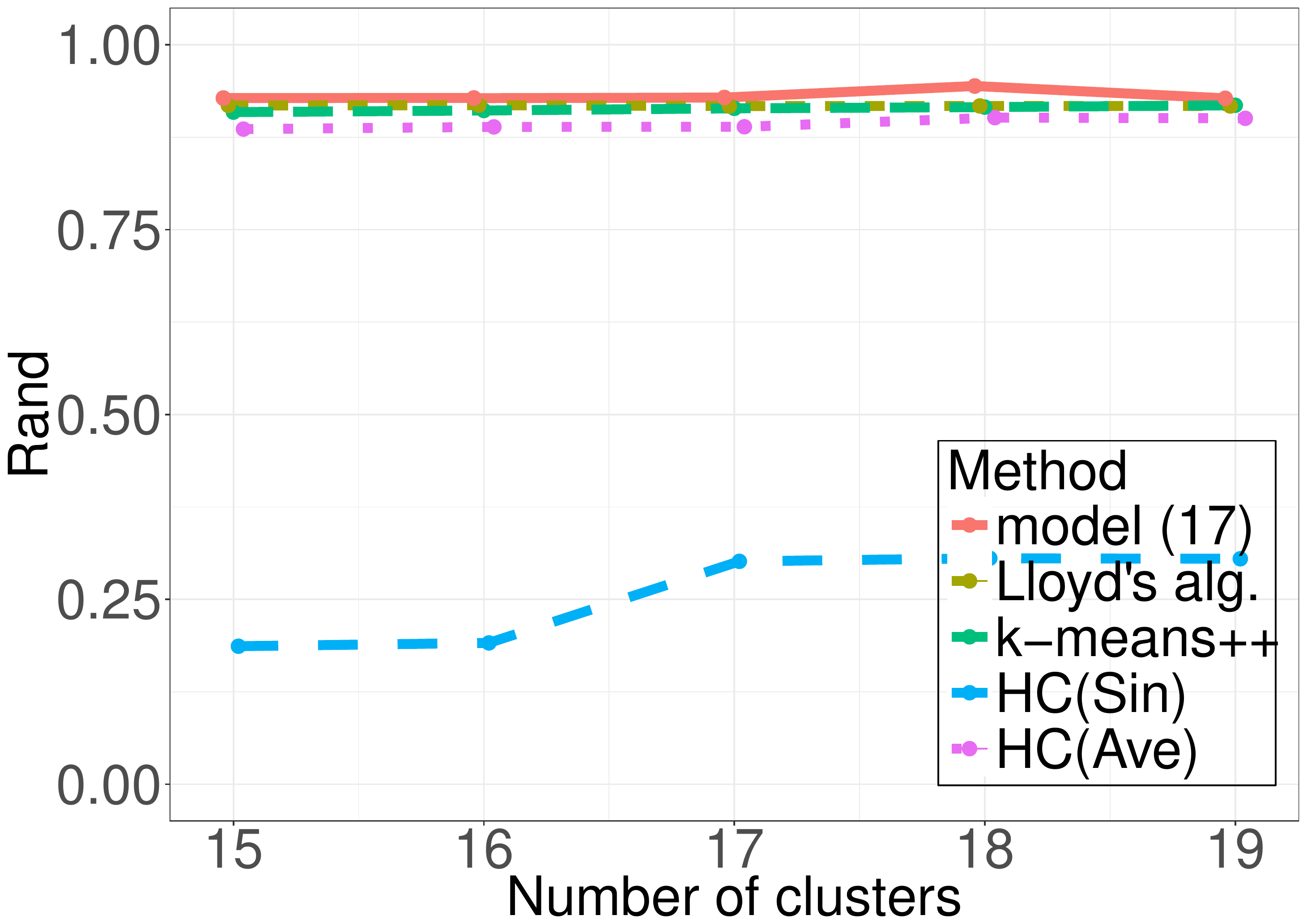}}}
	\subfloat[Leafs]{{\includegraphics[width=0.5\textwidth]{./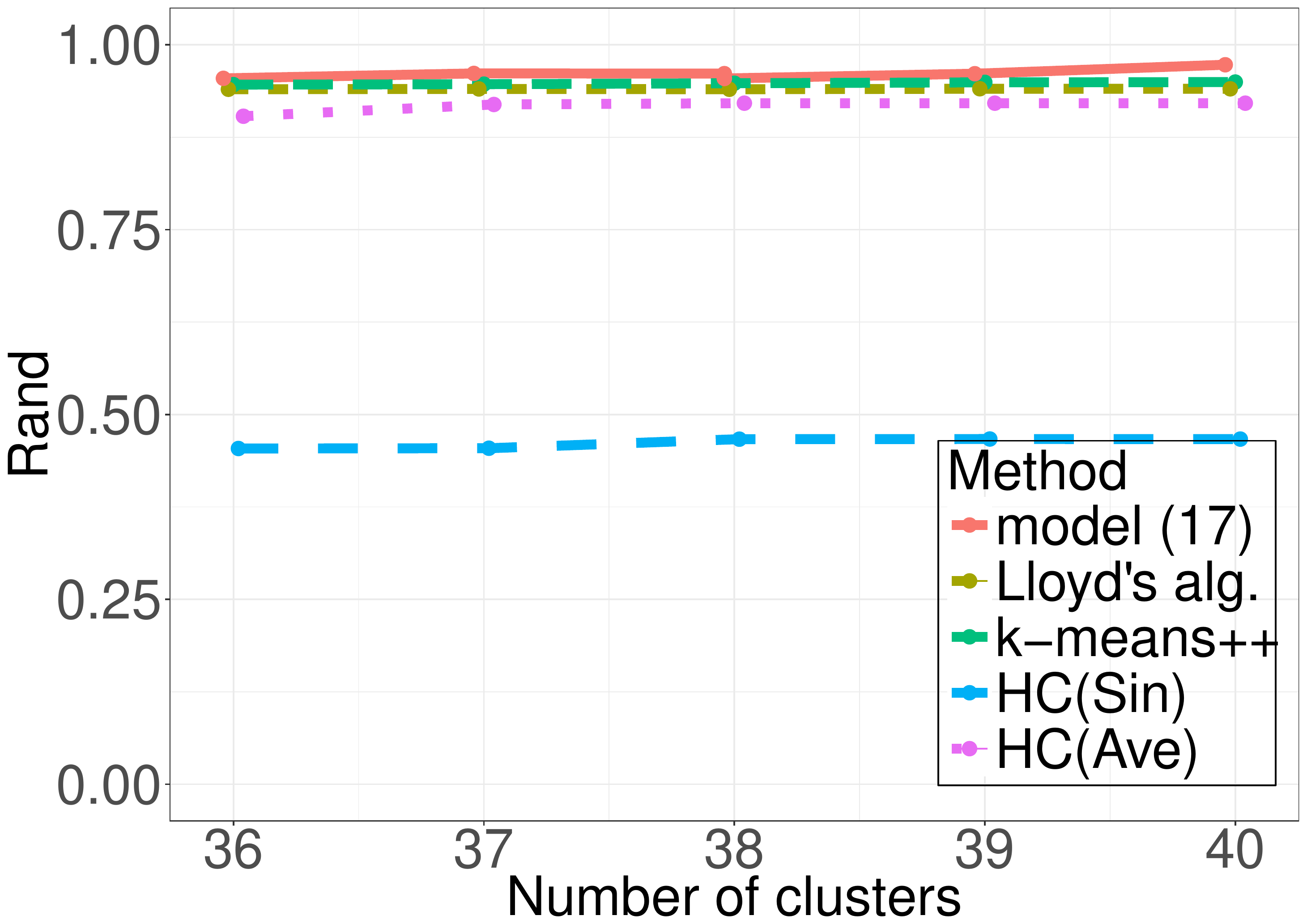}}}
	\caption{Comparison of Rand indexes among different methods on
		"Libras movement" dataset and "leafs" dataset}
	\label{fig:results_2}
\end{figure}

\section{Conclusion}
\label{sec:conclusion}
 This paper revisited TV based convex clustering by examining a weighted sum-of-$\ell_1$-norm relating convex model for data clustering. The   theoretical analysis on its exact clustering property goes beyond the existing ones to be applicable to the data with arbitrary number of clusters. Also, the separation condition on data for exact clustering guarantee is much shaper than that 
 of  the existing TV based convex clustering, and is also less restrictive than that for other convex clustering techniques, in terms of scalability and size ratios of clusters. 
   The experiments on real datasets also showed better performance of the proposed convex model over standard methods.
 The studies presented in this paper provide important insights and solid theoretical foundations for further development and studies of novel convex clustering methods.

\bibliographystyle{siamplain}
\bibliography{references_siam}

\appendix\label{appendix}
\section{Proof of \cref{theorem_twobody,theorem1}}
The proofs of \cref{theorem_twobody,theorem1} both take three main steps: (1) we derive an equivalent optimization problem of Model \cref{eqn:weightconvex} which is easier to be analysed owing to the variables separation inside $\ell_1$ norm; (2) the dual problem of the optimization is studied by constructing a dual pair for $k=2$ and $k\geq 3$ separately such that the minimizer of the optimization problem has some special properties; (3) based on the special properties derived in the previous step, we prove that  the minimizer of \cref{eqn:weightconvex} has exact clustering property.

	\subsection{Model equivalence and dual problem}
	Before proceeding, we  consider the column-centered version of $A$, denoted by $\tilde{A}$, which is defined as
	\begin{equation}
	\label{eqn:center}
	\tilde{A}=A-C,
	\end{equation}
	where $j$th column of $C$ is a constant vector with value $\frac{1}{m}\sum_{i=1}^mA_{ij}$. It can be seen that the mean of each column of $\tilde A$ is zero.
	
	 First of all, we prove that  Model \cref{eqn:weightconvex} taking $A$ as input is equivalent to Model \cref{eqn:weightconvex} taking $\tilde A$.
\begin{lemma}
		\label{thm:lemma1}
		Let $A\in \mathbb{R}^{m\times n}$ be a data matrix whose rows are arbitrarily sampled from $k$ sets $\mathcal{S}^{(s)}$ ($0\leq s\leq k-1$).
		Let $\tilde A$ denote its column-centered version defined by \eqref{eqn:center}. Then  the minimizer of the model \cref{eqn:weightconvex} has exact clustering property if and only if
		the minimizer of the following optimization problem 		\begin{equation}
		\label{eqn:eq1}
		\min_{\tilde{X}}\|\tilde{A}-\tilde{X}\|^2_F+\sum_{j=2}^m\sum_{i<j}ce^{-r\|\tilde{A}_i-\tilde{A}_j\|_2^2}\|\tilde{X}_i-\tilde{X}_j\|_1
		\end{equation}
		also has exact clustering property.
	\end{lemma}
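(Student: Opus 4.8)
The plan is to exhibit an affine bijection between the feasible sets of the two problems under which both the objective value and the exact clustering property are invariant, so that minimizers correspond to minimizers. The key structural observation is that the correction matrix $C$ in \cref{eqn:center} is row-constant: since the $j$th column of $C$ is the constant vector with value $\frac1m\sum_{i=1}^m A_{ij}$, every row $C_i$ equals the same vector of column means. Consequently $\tilde A_i-\tilde A_j=(A_i-C_i)-(A_j-C_j)=A_i-A_j$ for all $i,j$, so the Gaussian weights are unchanged, $e^{-r\|\tilde A_i-\tilde A_j\|_2^2}=e^{-r\|A_i-A_j\|_2^2}$.

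Next I would introduce the substitution $\tilde X=X-C$, a linear bijection of $\mathbb{R}^{m\times n}$ onto itself. Under it the data-fidelity term is preserved because $\tilde A-\tilde X=(A-C)-(X-C)=A-X$, hence $\|\tilde A-\tilde X\|_F^2=\|A-X\|_F^2$. The regularization term is preserved because the row differences are translation invariant, $\tilde X_i-\tilde X_j=X_i-X_j$, so $\|\tilde X_i-\tilde X_j\|_1=\|X_i-X_j\|_1$; combined with the weight invariance above, the full weighted total-variation penalty is unchanged. Therefore the objective of \cref{eqn:weightconvex} evaluated at $X$ equals the objective of \cref{eqn:eq1} evaluated at $\tilde X=X-C$. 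Since the map is a bijection and both problems are strictly convex (hence each admits a unique minimizer), $\widehat X$ is the minimizer of \cref{eqn:weightconvex} if and only if $\widehat X-C$ is the minimizer of \cref{eqn:eq1}.

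Finally I would observe that the exact clustering property \cref{eqn:exact} depends on a matrix only through its pairwise row differences: the defining conditions are equality of two rows when the corresponding data points lie in the same cluster and inequality otherwise. Since $\tilde X_i-\tilde X_j=X_i-X_j$, we have $X_i=X_j\iff \tilde X_i=\tilde X_j$, so $X$ has the exact clustering property if and only if $\tilde X=X-C$ does. Chaining this equivalence with the correspondence of minimizers from the previous step yields the stated ``if and only if''.

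I do not anticipate a genuine obstacle here; the lemma is a normalization step. The only points requiring care are the verification that $C$ is row-constant, which is precisely what makes both the weights and the penalty differences invariant, together with the term-by-term check that the affine shift $X\mapsto X-C$ leaves each part of the objective unchanged. Once these are in place the equivalence is immediate, and the same argument justifies working with the centered data $\tilde A$ throughout the remainder of the proofs.
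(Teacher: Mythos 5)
Your proposal is correct and follows essentially the same route as the paper: both rest on the observation that $C$ has identical rows, so the shift $X\mapsto X-C$ leaves the fidelity term, the Gaussian weights, and all row differences (hence the penalty and the exact clustering property) unchanged, giving a correspondence of minimizers. The only cosmetic difference is that you phrase the minimizer correspondence via a bijection preserving the objective (invoking strict convexity for uniqueness), whereas the paper writes out the equivalent inequality chain directly.
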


\begin{proof}
		Suppose $\widehat{\tilde{X}}$ is the minimizer of Problem~\eqref{eqn:eq1}, then for any $X\in\mathbb{R}^{m\times n}$, we have
		\begin{equation*}
		\label{eqn:proof1}
		\begin{split}
		&\|A-X\|^2_F+\sum_{i<j}ce^{-r\|A_i-A_{j}\|_2^2}\|X_i-X_j\|_1\\
		=&\|A-C-(X-C)\|^2_F+\sum_{j=2}^m\sum_{i<j}ce^{-r\|A_i-C_i-(A_j-C_j)\|_2^2}\|X_i-C_i-(X_j-C_j)\|_1\\
		=&\|\tilde{A}-(X-C)\|^2_F+\sum_{j=2}^m\sum_{i<j}ce^{-r\|\tilde{A}_i-\tilde{A}_j\|_2^2}\|X_i-C_i-(X_j-C_j)\|_1\\
		\geq &\|\tilde{A}-\widehat{\tilde{X}}\|^2_F+\sum_{j=2}^m\sum_{i<j}ce^{-r\|\tilde{A}_i-\tilde{A}_{j}\|_2^2}\|\widehat{\tilde{X}}_i-\widehat{\tilde{X}}_j\|_1\\
		=&\|A-(\widehat{\tilde{X}}+C)\|^2_F+\sum_{j=2}^m\sum_{i<j}ce^{-r\|A_i-A_{j}\|_2^2}\|(\widehat{\tilde{X}}_i+C_i)-(\widehat{\tilde{X}}_j+C_j)\|_1.\\
		\end{split}
		\end{equation*}
		Thus, we have $\widehat{\tilde{X}}+C$ as the minimizer of Problem \cref{eqn:weightconvex}. Similarly, if $\hat{X}$ is a minimizer of Problem \cref{eqn:weightconvex}, then $\hat{X}-C$ is the minimizer of Problem \cref{eqn:eq1}.
		Furthermore, suppose $\widehat{\tilde{X}}$ is the minimizer of Problem~\eqref{eqn:eq1}, and it has exact clustering property, then $\widehat{\tilde{X}}_i-\widehat{\tilde{X}}_j=\widehat{\tilde{X}}_i-C_i-(\widehat{\tilde{X}}_j-C_j)$ implies that the minimizer of Problem \cref{eqn:weightconvex} has exact clustering property as well. Similarly, if the minimizer of Problem \cref{eqn:weightconvex} has exact clustering property, then  the minimizer of Problem \cref{eqn:eq1} also has exact clustering property.
	\end{proof}
Therefore, in the remaining of this paper, we will assume that $A$ is column-centred such that the mean of each column of $A$ is zero.

For convenience, set $B=DA$. Using the same arguments as  \cite[Lemma 1, Lemma 2]{zhu2014convex}, we have
\begin{lemma}
	\label{thm:lemma2}
	Given any column centered-matrix $A$, $\hat{X}$ is minimizer of Problem \cref{eqn:weightconvex}, if and only if $D\hat{X}$ is the minimizer of the following problem,
	\begin{equation}
	\label{formula5}
	\hat{Y}=\argmin_{Y\in {\mathcal{D}}}\frac{1}{m}\|B-Y\|_F^2+c\sum_{i=1}^{{ m \choose 2}}\gamma_{i}\|Y_i\|_1,
	\end{equation}
	where $\gamma_i=\exp(-r\|(DA)_i\|_2^2) $, and $\mathcal{D}$ is the range space of $D$  defined by $\mathcal{D}=\{D X\ |\ X\in \mathbb{R}^{m\times n}\}$.
\end{lemma}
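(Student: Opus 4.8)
The plan is to change variables from the primal matrix $X$ to the difference matrix $Y=DX$, using that the weighted TV penalty only involves pairwise differences while the data-fidelity term can be transported through the Gram operator $D^TD$. Since each row of $DX$ equals some $X_p-X_q$ with $p<q$, the penalty becomes verbatim $c\sum_{i=1}^{\binom{m}{2}}\gamma_i\|(DX)_i\|_1=c\sum_i\gamma_i\|Y_i\|_1$ with $\gamma_i=\exp(-r\|(DA)_i\|_2^2)$; the only genuine work is to show that $\|A-X\|_F^2$ turns into $\tfrac1m\|B-Y\|_F^2=\tfrac1m\|DA-DX\|_F^2$, and that the correct admissible set for $Y$ is exactly $\mathcal{D}$.

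First I would record the kernel and range structure of $D$. By \cref{def:total} the rows of $D$ are precisely the vectors $(e_p-e_q)^T$ for $p<q$, where $e_p$ is the $p$-th standard basis vector; hence $D^TD=\sum_{p<q}(e_p-e_q)(e_p-e_q)^T=mI-\vec{\textbf{1}}\,\vec{\textbf{1}}^T$, the Laplacian of the complete graph $K_m$. Its kernel is spanned by $\vec{\textbf{1}}$, so $\ker D=\{\vec{\textbf{1}}v^T:v\in\BR^n\}$ is the space of row-constant matrices. Writing $E=A-X$, I would then compute $\|B-Y\|_F^2=\|DE\|_F^2=\tr(E^TD^TDE)=\tr(E^T(mI-\vec{\textbf{1}}\,\vec{\textbf{1}}^T)E)$. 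The cross term vanishes exactly when $E$ is column-centered, i.e. $\vec{\textbf{1}}^TE=0$, leaving $\|DE\|_F^2=m\|E\|_F^2$. Since $A$ is column-centered, this holds precisely when $X$ is column-centered, in which case $\|A-X\|_F^2=\tfrac1m\|B-DX\|_F^2$ and the two objectives coincide.

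To restrict to column-centered $X$ without loss of generality I would invoke strict convexity: $\|A-X\|_F^2$ is strictly convex, so the objective of \cref{eqn:weightconvex} has a unique minimizer, and because the penalty is invariant under $X\mapsto X+\vec{\textbf{1}}v^T$ while the fidelity is strictly decreased by the centering shift $v^T=-\tfrac1m\vec{\textbf{1}}^TX$, that minimizer must be column-centered. Matching the feasible sets then finishes the argument: the map $X\mapsto DX$ sends the subspace of column-centered matrices bijectively onto $\mathcal{D}=\mathrm{range}(D)$, surjectivity being the definition of $\mathcal{D}$ together with the existence of a column-centered preimage for each $Y\in\mathcal{D}$, and injectivity following from $\ker D\cap\{\text{column-centered}\}=\{0\}$, since $\vec{\textbf{1}}v^T$ column-centered forces $mv=0$. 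Consequently minimizing the original objective over column-centered $X$ is the same problem as minimizing $\tfrac1m\|B-Y\|_F^2+c\sum_i\gamma_i\|Y_i\|_1$ over $Y\in\mathcal{D}$, and $\hat X$ solves the former if and only if $D\hat X$ solves the latter.

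I expect the main obstacle to be the bookkeeping around column-centering rather than any deep computation: one must argue cleanly that the minimizer may be taken centered (via strict convexity and shift-invariance), so that the rank-one term $\vec{\textbf{1}}\,\vec{\textbf{1}}^T$ in $D^TD$ genuinely drops out and the factor $\tfrac1m$ appears, and one must track the bijection carefully so that the constraint $Y\in\mathcal{D}$ in \cref{formula5} is neither weakened nor over-imposed.
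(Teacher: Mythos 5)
Your proof is correct and is essentially the same argument the paper relies on: the paper itself gives no self-contained proof, deferring to Lemmas 1--2 of \cite{zhu2014convex}, whose content is exactly your computation --- the Gram identity $D^TD=mI-\vec{\textbf{1}}\,\vec{\textbf{1}}^T$, the invariance of the weighted penalty under row-constant shifts $X\mapsto X+\vec{\textbf{1}}v^T$, and the resulting bijection between column-centered matrices and $\mathcal{D}=\mathrm{range}(D)$, which together yield the factor $\tfrac1m$ and the constraint $Y\in\mathcal{D}$. Your insistence on column-centered $\hat{X}$ is also the right reading of the lemma's ``if and only if'': for a non-centered $\hat{X}$ one can have $D\hat{X}$ minimizing \cref{formula5} while $\hat{X}$ fails to minimize \cref{eqn:weightconvex}, so the equivalence holds precisely under the centering normalization you impose (which, as you note, any minimizer of \cref{eqn:weightconvex} automatically satisfies).
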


The problem \cref{eqn:weightconvex} is now simplified as the entries in sum-of-$\ell_1$-norm separable. To tackle the technical challenges  raised by  the introduced constraint $Y \in \mathcal{D}$,
we consider the dual problem of \cref{formula5}. By standard arguments in convex analysis \cite[Page 301]{bertsekas2009convex}, The variable pair $\{\hat{Y},\hat{\Lambda}\}$ is an optimal primal-dual pair if and only if
\begin{equation}\label{eqn:dual}
\hat{Y_i}=\operatornamewithlimits{argmin}_{Y_i^T\in \mathbb{R}^n}\frac{1}{m}\|B_i-Y_i\|_2^2+c\gamma_i\|Y_i\|_1-Y_i\Lambda_i^T,
\end{equation}
subject to $Y\in \mathcal{D},\ \Lambda^T\in \mathcal{D}^\perp$, where $ \mathcal{D}^\perp$ denotes the orthgonal complement of $\mathcal{D}$.
Hence, to prove $\hat{X}$ of having exact clustering property, it suffices to find some dual parameter $\widehat{\Lambda}^T \in \mathcal{D}^\perp$, such that the corresponding primal minimizer $\hat{Y}\in \mathcal{D}$  has the following form,
\begin{equation}
\label{eqn:solutionform}
\hat{Y}_{p}=
\left\{ \begin{array}{l}
\vec{\textbf{0}},\ \ \ \ \ \makebox{if}\ p\in\mathcal{P}^{(0)},\\
\vec{\textbf{e}}^{s,l},\ \ \makebox{if}\ p\in\mathcal{P}^{(s,l)},
\end{array} \right.
\end{equation}
where $(\vec{\textbf{e}}^{s,l})^T\in \mathbb{R}^n$ is some non-zero constant vector. The index set $\mathcal{P}^{(0)}$ is defined in \cref{eqn:P0} and
\begin{equation}
\label{eqn:Pkl}\mathcal{P}^{(s,l)}=\{(i-1)(m-1)+(j-i)\ |\  i\in \mathcal{I}^{(s)},\ j\in \mathcal{I}^{(l)}\}
\end{equation} represents the row indexes of $DX$ which  are corresponding to the differences of two rows from different clusters $\mathcal{S}^{(s)}$ and $\mathcal{S}^{(l)}$, where $0\leq s<l\leq k-1$. $\mathcal{P}^{(1)}=\cup_{i<j}\mathcal{P}^{(i,j)}$ defined in \cref{eqn:P1} is
the index set of all differences from distinct clusters.

\subsection{Proof of \cref{theorem_twobody}}
\label{sec:prf of two-cluster}
First of all, we show some sufficient condition for primal-dual solver.
\begin{theorem}[sufficient condition for $2$-cluster]
\label{thm:suf_two}
 Assume $c,r\in \mathbb{R}^+$ and $\widehat{\Lambda}^T\in \mathcal{D}^\perp$.  $\vec{\textbf{e}}^T\in \mathbb{R}^n$ is some nonzero column vector. $\mathcal{P}^{(0)}$ and $\mathcal{P}^{(1)}$ are defined in \cref{eqn:P0} and \cref{eqn:P1}.
 
	Suppose for $p\in\mathcal{P}^{(1)}$, $\hat\Lambda_{pq}$ satisfies
	\begin{subequations}
	\begin{align}
	&c\gamma_{p}\leq \frac{2}{m}\left|B_{pq}+\frac{m}{2}\widehat{\Lambda}_{pq}\right|,\label{eqn:cond_two10}\\
	&\left(B_{pq}+\frac{m}{2}\widehat{\Lambda}_{pq}\right)-\frac{mc\gamma_p}{2}\sign\left(B_{pq}+\frac{m}{2}\widehat{\Lambda}_{pq}\right)=\vec{\textbf{e}}_q,\label{eqn:cond_two11}
	\end{align}
	\end{subequations}
		
		 and for $p\in\mathcal{P}^{(0)}$, we have
		\begin{equation}
		\label{eqn:cond_two2}
		c\gamma_p>\frac{2}{m}\left|B_{pq}+\frac{m}{2}\widehat{\Lambda}_{pq}\right|,
		\end{equation}
$\widehat{\Lambda}$ is the optimal dual variable in \cref{eqn:dual}, and $\hat{Y}$ can be derived respectively. Together with the fact that $\widehat{Y}=D\widehat{X}$, $\widehat{X}$ satisfies the exact clustering property.
\end{theorem}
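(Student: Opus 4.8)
The plan is to treat \cref{thm:suf_two} as a dual-certificate verification: given the proposed multiplier $\widehat{\Lambda}^T\in\mathcal{D}^\perp$ and the target vector $\vec{\textbf{e}}$, I would show that the primal vector $\widehat{Y}$ produced by the per-row optimality system \eqref{eqn:dual} is exactly the block-constant matrix \eqref{eqn:solutionform}, and that this matrix lies in $\mathcal{D}$, so that $\{\widehat{Y},\widehat{\Lambda}\}$ is a genuine optimal primal--dual pair for \eqref{formula5}. Exact clustering of $\widehat{X}$ then follows from \cref{thm:lemma2} and the relation $\widehat{Y}=D\widehat{X}$.

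First I would observe that the per-row problem in \eqref{eqn:dual} decouples across the $n$ coordinates, and that for fixed $(p,q)$ the scalar objective $\frac{1}{m}(B_{pq}-y)^2+c\gamma_p|y|-y\widehat{\Lambda}_{pq}$ is, after completing the square, of the form $\frac{1}{m}(y-u_{pq})^2+c\gamma_p|y|$ up to a constant, where $u_{pq}=B_{pq}+\frac{m}{2}\widehat{\Lambda}_{pq}$. Its unique minimizer is the soft-threshold $\mathrm{soft}_{mc\gamma_p/2}(u_{pq})$, which equals $0$ exactly when $\frac{2}{m}|u_{pq}|\le c\gamma_p$ and equals $u_{pq}-\frac{mc\gamma_p}{2}\sign(u_{pq})$ otherwise. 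Reading conditions \eqref{eqn:cond_two10}--\eqref{eqn:cond_two2} through this lens is the heart of the argument: for $p\in\mathcal{P}^{(0)}$, the strict inequality \eqref{eqn:cond_two2} places $u_{pq}$ inside the thresholding interval and forces $\widehat{Y}_{pq}=0$; for $p\in\mathcal{P}^{(1)}$, \eqref{eqn:cond_two10} places $u_{pq}$ outside it while \eqref{eqn:cond_two11} pins the thresholded value to $\vec{\textbf{e}}_q$. Hence $\widehat{Y}$ has precisely the form \eqref{eqn:solutionform} with a single cross-cluster vector $\vec{\textbf{e}}$, as $k=2$.

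The step that actually needs care — and which I expect to be the main obstacle — is primal feasibility $\widehat{Y}\in\mathcal{D}$: the characterization \eqref{eqn:dual} certifies optimality only once both $\widehat{Y}\in\mathcal{D}$ and $\widehat{\Lambda}^T\in\mathcal{D}^\perp$ hold, and the soft-thresholding computation alone does not deliver the former. Here I would exploit the $2$-cluster index convention, namely that $\mathcal{I}^{(0)}=\{1,\dots,m_0\}$ precedes $\mathcal{I}^{(1)}$, so every row index in $\mathcal{P}^{(1)}$ corresponds to a difference (cluster-$0$ row)$\,-\,$(cluster-$1$ row) with a consistent orientation. Consequently the block-constant $\widehat{Y}$ of \eqref{eqn:solutionform} is realized as $DX^\ast$ by the piecewise-constant $X^\ast$ equal to $\vec{\textbf{e}}$ on $\mathcal{I}^{(0)}$ and $\vec{\textbf{0}}$ on $\mathcal{I}^{(1)}$, which certifies $\widehat{Y}\in\mathcal{D}$. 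This is exactly the point where, for $k\ge3$, the hidden consistency constraint \eqref{eqn:hidconstraint} on the vectors $\vec{\textbf{e}}^{s,l}$ must be imposed; for $k=2$ no such constraint arises.

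Finally, with $\widehat{Y}\in\mathcal{D}$, with $\widehat{\Lambda}^T\in\mathcal{D}^\perp$ assumed, and with $\widehat{Y}$ minimizing each per-row Lagrangian, the pair $\{\widehat{Y},\widehat{\Lambda}\}$ satisfies \eqref{eqn:dual}, so by strict convexity of $\frac{1}{m}\|B-Y\|_F^2+c\sum_i\gamma_i\|Y_i\|_1$ the vector $\widehat{Y}$ is the unique minimizer of \eqref{formula5}. Writing $\widehat{Y}=D\widehat{X}$ through \cref{thm:lemma2}, the vanishing blocks on $\mathcal{P}^{(0)}$ give $\widehat{X}_i=\widehat{X}_j$ whenever $A_i,A_j$ lie in a common cluster, while the blocks equal to the nonzero $\vec{\textbf{e}}$ on $\mathcal{P}^{(1)}$ give $\widehat{X}_i\neq\widehat{X}_j$ across clusters; this is exactly the exact clustering property \cref{eqn:exact}.
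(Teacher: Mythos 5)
Your proposal is correct and follows essentially the same route as the paper: the per-row problem in \cref{eqn:dual} is solved by soft-thresholding $u_{pq}=B_{pq}+\frac{m}{2}\widehat{\Lambda}_{pq}$ at level $\frac{mc\gamma_p}{2}$, conditions \cref{eqn:cond_two10,eqn:cond_two11,eqn:cond_two2} then force $\widehat{Y}$ into the block form \cref{eqn:solutionform_two}, and optimality of the pair $(\widehat{Y},\widehat{\Lambda})$ yields exact clustering via \cref{thm:lemma2}. The only difference is that you make explicit the primal-feasibility check $\widehat{Y}\in\mathcal{D}$ (by exhibiting $X^\ast$ with rows $\vec{\textbf{e}}$ on $\mathcal{I}^{(0)}$ and $\vec{\textbf{0}}$ on $\mathcal{I}^{(1)}$), a step the paper leaves implicit in asserting that $\widehat{Y}$ "meets the primal minimizer form in \cref{eqn:solutionform}"; this is a welcome tightening, not a different proof.
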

\begin{proof}
If $\widehat{\Lambda}$ is  the optimal dual variable in \cref{eqn:dual}, the optimal primal variable $\widehat{Y}$ can be derived as 
\begin{equation}\label{eqn:proof2}
	\widehat{Y}_{pq}=\left(B_{pq}+\frac{m}{2}\widehat{\Lambda}_{pq}\right)-\frac{mc \gamma_{p}}{2}\sign\left(B_{pq}+\frac{m}{2}\widehat{\Lambda}_{pq}\right),
	\end{equation}
	if $c\gamma_{p}\leq \frac{2}{m}|B_{pq}+\frac{m}{2}\widehat{\Lambda}_{pq}|$ and $\widehat{Y}_{pq}=0$ otherwise, for  $p=1,\dots,\binom{m}{2}$ and  $q=1,\dots,n$.

By the conditions \cref{eqn:cond_two10,eqn:cond_two10,eqn:cond_two11,eqn:cond_two2},we can see that 
\begin{equation}
	\label{eqn:solutionform_two}
	\widehat{Y}_{p}=
	\left\{ \begin{array}{l}
	\vec{\textbf{0}},\ \ \mbox{if}\ p\in\mathcal{P}^{(0)},\\
	\vec{\textbf{e}},\ \ \mbox{if}\ p\in\mathcal{P}^{(1)},
	\end{array} \right.
	\end{equation}
which meets the primal minimizer form in \cref{eqn:solutionform}. Therefore, $(\widehat{Y},\widehat{\Lambda})$ is the optimal primal-dual pair in \cref{eqn:dual}.
\end{proof}

Under the sufficient condition in \cref{thm:suf_two}, we have $\widehat{X}$ has exact clustering property with
	\[\widehat{X}_{i,j}=
	\left\{ \begin{array}{l}
	\frac{m_1}{m_0+m_1}\vec{\textbf{e}}_j, \quad i\in\mathcal{I}^{(0)},\\
	\frac{-m_0}{m_0+m_1}\vec{\textbf{e}}_j, \quad i\in\mathcal{I}^{(1)}.
	\end{array} \right.
	\]

We explicitly construct the optimal primal and dual pair $(\widehat{Y};\widehat{\Lambda})$ of Model
	\cref{formula5}.
The sufficient conditions can be further simplified by  \cref{lem:sufficient_two1,lem:sufficient_two2}. Before further relaxation of the conditions, we introduce some notations for simplicity. Here we define
\begin{equation}
\label{eqn:notation_def_two}
	\left\{
	\begin{array}{ll}
	\epsilon_i=\frac{8m_{1-i}(m_i-1)+4m_i^2}{mm_i^2}\\
	\mu_{pq}=B_{pq}+\frac{m}{2}\widehat{\Lambda}_{pq}\\
	\tau_q=\frac{1}{m_0m_1}\sum_{i\in\mathcal{P}^{(1)}}B_{iq}\\
	\rho=\frac{1}{m_0m_1}\sum_{i\in\mathcal{P}^{(1)}}\gamma_{i}
	\end{array}
	\right.
\end{equation}
	where $1\leq p \leq {m\choose 2}$ and $1\leq q\leq n$.
	
\begin{lemma}
\label{lem:sufficient_two1}
Denote $\mathcal{Q}=\{q|\tau_q=0, 1\leq q \leq n\}$. 
	Suppose there exist $c,r\in \mathbb{R}^+$ such that
	\begin{equation}
	\label{eqn:ec_two}
	|\tau_q|>\max_{p\in \mathcal{P}^{(1)}}\left|\frac{mc}{2}(\rho-\gamma_p)\right|,
	\end{equation}
	and
	\begin{equation}
	\label{eqn:cond_two102}
	\frac{2|\tau_q|}{m\rho}>c,
	\end{equation}
	for any $q\not\in\mathcal{Q}$.
Then there exist proper $\widehat\Lambda_{pq}$ ($p\in \mathcal{P}^{(1)},\ 1\leq q\leq n$) to  meet the condition \cref{eqn:cond_two10,eqn:cond_two11}.
\end{lemma}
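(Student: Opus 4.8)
The plan is to construct the inter-cluster dual entries explicitly and then verify the two pointwise optimality conditions \cref{eqn:cond_two10,eqn:cond_two11}. First I would rewrite \cref{eqn:cond_two11} using the shorthand $\mu_{pq}=B_{pq}+\frac m2\widehat\Lambda_{pq}$ from \cref{eqn:notation_def_two}: it demands that every inter-cluster entry $\mu_{pq}$ ($p\in\mathcal{P}^{(1)}$) be soft-thresholded to one common value $\vec{\textbf{e}}_q$. Hence, once a common value $\vec{\textbf{e}}_q$ and a common sign $\sigma_q$ are fixed for each coordinate $q$, \cref{eqn:cond_two11} becomes the defining relation $\mu_{pq}=\vec{\textbf{e}}_q+\frac{mc\gamma_p}{2}\sigma_q$, which I solve for $\widehat\Lambda_{pq}=\frac2m(\vec{\textbf{e}}_q-B_{pq})+c\gamma_p\sigma_q$. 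Thus the construction reduces to choosing the scalars $\vec{\textbf{e}}_q$ and $\sigma_q$.

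To pin down $\vec{\textbf{e}}_q$, I would invoke dual feasibility $\widehat\Lambda^T\in\mathcal{D}^\perp$. Since $\mathcal{D}^\perp=\ker D^T$ encodes flow conservation at every node, summing these node constraints over one cluster cancels all intra-cluster entries and leaves the single cut condition $\sum_{p\in\mathcal{P}^{(1)}}\widehat\Lambda_{pq}=0$. Substituting the expression above and using $\tau_q,\rho$ from \cref{eqn:notation_def_two} forces $\vec{\textbf{e}}_q=\tau_q-\frac{mc\rho}{2}\sigma_q$. Taking $\sigma_q=\sign(\tau_q)$ for $q\notin\mathcal{Q}$, condition \cref{eqn:cond_two102} (which reads $|\tau_q|>\frac{mc\rho}{2}$) guarantees $\vec{\textbf{e}}_q\sigma_q=|\tau_q|-\frac{mc\rho}{2}>0$, so $\vec{\textbf{e}}_q$ is a legitimate nonzero common value carrying the intended sign. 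For the degenerate coordinates $q\in\mathcal{Q}$ I would set $\vec{\textbf{e}}_q=0$, which is harmless as long as $\vec{\textbf{e}}$ is nonzero overall.

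The remaining and most delicate step is checking that the pointwise conditions hold for this choice, the difficulty being that the thresholds $\frac{mc\gamma_p}{2}$ vary with $p\in\mathcal{P}^{(1)}$ whereas the target $\vec{\textbf{e}}_q$ must stay constant. Back-substituting gives $\mu_{pq}=\tau_q+\frac{mc}{2}(\gamma_p-\rho)\sigma_q$, and the soft-threshold formula \cref{eqn:proof2} returns the common value $\vec{\textbf{e}}_q$ only if $\sign(\mu_{pq})=\sigma_q$ holds \emph{uniformly} over all inter-cluster pairs. This uniform sign control is precisely what condition \cref{eqn:ec_two} supplies: it bounds the weight spread $\frac{mc}{2}|\rho-\gamma_p|$ strictly below $|\tau_q|$, so the weight-dependent perturbation can never flip the sign of $\mu_{pq}$. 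I expect this coupling across the heterogeneous Gaussian weights $\gamma_p$ to be the main obstacle, since it is where the nonsmoothness of the $\ell_1$ term collides with the variability of the weights. Once sign consistency is secured, \cref{eqn:cond_two10} is routine: one gets $|\mu_{pq}|=|\tau_q|+\frac{mc}{2}(\gamma_p-\rho)$, so $c\gamma_p\le\frac2m|\mu_{pq}|$ is equivalent to $c\le\frac{2|\tau_q|}{m\rho}$, which is again \cref{eqn:cond_two102}. This exhibits the desired $\widehat\Lambda_{pq}$ and completes the argument.
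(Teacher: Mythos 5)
Your proposal is correct and follows essentially the same route as the paper's proof: both enforce the cut condition $\sum_{p\in\mathcal{P}^{(1)}}\widehat{\Lambda}_{pq}=0$ coming from $\widehat{\Lambda}^T\in\mathcal{D}^\perp$ to pin down $\vec{\textbf{e}}_q=\tau_q-\frac{mc\rho}{2}\sign(\tau_q)$, arrive at the identical construction $\widehat{\Lambda}_{pq}=\frac{2}{m}(\tau_q-B_{pq})-c(\rho-\gamma_p)\sign(\tau_q)$ (with $\widehat{\Lambda}_{pq}=-\frac{2}{m}B_{pq}$ when $\tau_q=0$), and then use \cref{eqn:ec_two} for the uniform sign consistency $\sign(\mu_{pq})=\sign(\tau_q)$ and \cref{eqn:cond_two102} to verify the threshold inequality \cref{eqn:cond_two10}. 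The only cosmetic difference is the order of operations (you parametrize by $\vec{\textbf{e}}_q$ first and then impose the cut condition, while the paper sums \cref{eqn:cond_two11} over $p$ first), which leads to the same algebra.
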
 
\begin{proof}
Since the variable $\vec{\textbf{e}}_q$ is a constant independent from $p$, summing both sides of~\eqref{eqn:cond_two11} for $p\in \mathcal{P}^{(1)}$, we have
	\begin{equation}
	\label{eqn:relax3_two}
	\mu_{pq}=\tau_q+\frac{m}{2m_1m_0}\sum_{i\in\mathcal{P}^{(1)}}\widehat{\Lambda}_{iq}-\frac{mc}{2}\left[\frac{1}{m_0m_1}\sum_{i\in\mathcal{P}^{(1)}}\gamma_i \sign(\mu_{iq})-\gamma_{p}\sign(\mu_{pq})\right].
	\end{equation}
	Notice that $\widehat{\Lambda}^T\in\mathcal{D}^\perp$, we have $\sum_{i\in\mathcal{P}^{(1)}}\widehat{\Lambda}_{iq}=0$. Then, the equality \cref{eqn:relax3_two} can be further simplified as
	\begin{equation}
	\label{eqn:relax4_two}
	\mu_{pq}=\tau_q
	-\frac{mc}{2}\left[\frac{1}{m_0m_1}\sum_{i\in\mathcal{P}^{(1)}}\gamma_i \sign(\mu_{iq})-\gamma_{p}\sign(\mu_{pq})\right].
	\end{equation}
	
	When $q\not\in\mathcal{Q}$,
	if there exist $c,r\in \mathbb{R}^+$ such that
	\[
	|\tau_q|>\max_p\left|\frac{mc}{2}(\rho-\gamma_p)\right|,
	\]
it is easy to see that $\sign(\mu_{pq})=\sign(\tau_q),$
	and \cref{eqn:relax4_two} holds with
	\begin{equation}
	\label{eqn:lambdaf1_two}
	\begin{split}
	&\widehat{\Lambda}_{pq}
	=\frac{2}{m}(\tau_q-B_{pq})-c(\rho-\gamma_{p})\sign(\tau_q).\\
	\end{split}
	\end{equation}

If $q\in\mathcal{Q}$, the inequality~\eqref{eqn:relax4_two} holds by choosing
	\begin{equation}\label{eqn:lambdaf0_two}
	\widehat{\Lambda}_{pq}=-\frac{2}{m}B_{pq}.
	\end{equation}
Therefore, Whenever $\tau_q=0$ or not, $\widehat{\Lambda}_{pq}$ can be written in the form of \cref{eqn:lambdaf1_two}.

Substituting \cref{eqn:lambdaf1_two} into \cref{eqn:cond_two11}, we will have
	\[
	\vec{\mathbf{e}}_q=\tau_q-\frac{mc\rho}{2} \sign(\tau_q),
	\]
	which is independent from index $p$. In other words, the inequality \cref{eqn:ec_two} implies \cref{eqn:cond_two11}.
	
	Secondly, provided that the inequality~\eqref{eqn:ec_two} holds true, we have
	\begin{equation}
	\label{eqn:lambdaf_two}
	\begin{split}
	\mu_{pq}
	=\tau_q-\frac{mc}{2}(\rho-\gamma_{p}){\sign{(\tau_q)}}.
	\end{split}
	\end{equation}
	Then, replacing the term $\mu_{pq}$ in \cref{eqn:cond_two10} by \cref{eqn:lambdaf_two} gives
	\[
	c\gamma_p\leq \frac{2}{m}\left|\tau_q\\
	-\frac{mc}{2}(\rho
	-\gamma_{p})\sign{(\tau_q)}\right|,
	\]
	for any $p\in \mathcal{P}^{(1)}$ and $q\in \mathcal{Q}$.
	
	If 
	\[
	\frac{2|\tau_q|}{m\rho}>c,
	\]
	we have 
	\[
	c\gamma_p\leq\frac{2}{m}|\tau_q|+c\gamma_p-c\rho
	\leq \frac{2}{m}\left||\tau_q|-\frac{mc}{2}(\rho-\gamma_p)\right|
	= \frac{2}{m}\left|\tau_q-\frac{mc}{2}(\rho-\gamma_{p})\sign{(\tau_q)}\right|.
\]
Thus \cref{eqn:ec_two,eqn:cond_two102} imply \cref{eqn:cond_two10,eqn:cond_two11}.
\end{proof}
\begin{lemma}
\label{lem:sufficient_two2}
Assume $\{\widehat{\Lambda}_{pq}\}_{p\in\mathcal{P}^{(1)},1\leq q\leq n}$ satisfies \cref{eqn:cond_two10,eqn:cond_two11}. If
\begin{equation}
	\label{eqn:relaxc2_two}
	\begin{split}
	&\max_{0\leq i\leq 1}\left[{{\epsilon_i}\dia(\mathcal{S}^{(i)})}+\frac{4c m_{1-i}}{m_i}\max_{j\in\mathcal{P}^{(1)}}(\gamma_j)\right]<c\gamma_p,
	\end{split}
	\end{equation}
	for any $p\in \mathcal{P}^{(0)}$. Then there exist $c,r\in\mathbb{R}^+$ and $\widehat{\Lambda}\in\mathcal{D}^\perp$ satisfying Condition \cref{eqn:cond_two2}.
\end{lemma}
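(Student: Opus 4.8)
The plan is to exhibit explicit within-cluster dual entries $\widehat{\Lambda}_{pq}$, $p\in\mathcal{P}^{(0)}$, that complete the cross-cluster entries already fixed in \cref{lem:sufficient_two1} (via \cref{eqn:lambdaf1_two}) into a full matrix with $\widehat{\Lambda}^T\in\mathcal{D}^\perp$, and then to check that the hypothesis \cref{eqn:relaxc2_two} forces \cref{eqn:cond_two2}. First I would reinterpret the constraint $\widehat{\Lambda}^T\in\mathcal{D}^\perp$ as $D^T\widehat{\Lambda}=0$: fixing a column $q$ and writing $\lambda_{(i,j)}=\widehat{\Lambda}_{pq}$ for the pair $(i,j)$ indexing row $p$, this reads as a flow-conservation law on the complete graph $K_m$, namely $\sum_{j>k}\lambda_{(k,j)}-\sum_{i<k}\lambda_{(i,k)}=0$ at every vertex $k$. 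Since the cross-cluster entries are prescribed, the within-cluster entries are free except that they must cancel, vertex by vertex, the net flow produced by the cross edges.

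Next I would compute that net cross flow. Writing $f_k=\sum_{j\in\mathcal{I}^{(1)}}\widehat{\Lambda}_{(k,j),q}$ for the cross contribution at a vertex $k\in\mathcal{I}^{(0)}$, I would substitute \cref{eqn:lambdaf1_two} and use $B_{(k,j),q}=A_{kq}-A_{jq}$ together with the identities $\tau_q=(A^{(0)}_c)_q-(A^{(1)}_c)_q$ and $\sum_{j\in\mathcal{I}^{(1)}}B_{(k,j),q}=m_1\big(A_{kq}-(A^{(1)}_c)_q\big)$, so that the mean terms telescope and I expect
\[
f_k=\frac{2m_1}{m}\left((A^{(0)}_c)_q-A_{kq}\right)-c\,\sign(\tau_q)\left(m_1\rho-\sum_{j\in\mathcal{I}^{(1)}}\gamma_{(k,j)}\right),
\]
with the symmetric expression on $\mathcal{I}^{(1)}$. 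Summing over the vertices of one cluster gives $\sum_{p\in\mathcal{P}^{(1)}}\widehat{\Lambda}_{pq}=0$, since both brackets vanish by the very definitions of $\tau_q$ and $\rho$; hence the required within-cluster divergences sum to zero and a balancing flow exists. I would construct it through a discrete potential: setting $\widehat{\Lambda}_{(i,j),q}=\frac{1}{m_s}(f_j-f_i)$ for a within-cluster pair $(i,j)\subset\mathcal{I}^{(s)}$ produces exactly divergence $-f_k$ at each vertex (because the mean of the $f_k$ over the cluster is zero), so that $D^T\widehat{\Lambda}=0$, and it obeys $|\widehat{\Lambda}_{(i,j),q}|\le\frac{2}{m_s}\max_k|f_k|$.

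Finally I would bound \cref{eqn:cond_two2}. For $p\in\mathcal{P}^{(0)}$ inside cluster $i$ one has $|B_{pq}|\le\dia(\mathcal{S}^{(i)})$, while the divergence estimate gives $|f_k|\le\frac{2m_{1-i}}{m}\dia(\mathcal{S}^{(i)})+c\,m_{1-i}\max_{j\in\mathcal{P}^{(1)}}\gamma_j$, using $|(A^{(i)}_c)_q-A_{kq}|\le\dia(\mathcal{S}^{(i)})$ and the fact that $m_{1-i}\rho$ and $\sum_j\gamma_{(k,j)}$ both lie in $[0,\,m_{1-i}\max_j\gamma_j]$. Combining these through $\frac{2}{m}|B_{pq}+\frac{m}{2}\widehat{\Lambda}_{pq}|\le\frac{2}{m}|B_{pq}|+|\widehat{\Lambda}_{pq}|$ yields an upper bound that, after collecting the combinatorial coefficients, is at most $\epsilon_i\dia(\mathcal{S}^{(i)})+\frac{4cm_{1-i}}{m_i}\max_{j\in\mathcal{P}^{(1)}}\gamma_j$; assuming the maximum of this quantity over $i$ is strictly below $c\gamma_p$ — which is precisely \cref{eqn:relaxc2_two} — delivers \cref{eqn:cond_two2}. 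The main obstacle I anticipate is this second step: carrying out the divergence computation so that the point-to-mean deviations telescope cleanly, and tracking the combinatorial factors (the $m_i$, $m_{1-i}$ and the $m_i-1$ terms) accurately enough to land on the stated constant $\epsilon_i$. By contrast, the existence of the balancing within-cluster flow and its potential representation is routine once the global balance $\sum_{p\in\mathcal{P}^{(1)}}\widehat{\Lambda}_{pq}=0$ has been confirmed.
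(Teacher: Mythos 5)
Your proposal is correct, and its skeleton coincides with the paper's proof: both substitute the explicit cross-cluster duals \cref{eqn:lambdaf1_two} into the orthogonality constraint, reduce that constraint to per-vertex divergence conditions, verify the global balance $\sum_{p\in\mathcal{P}^{(1)}}\widehat{\Lambda}_{pq}=0$ (your $f_k$ is exactly the paper's row sum $\sum_{p\in\mathcal{P}^{(1)}_t}\widehat{\Lambda}_{pq}$, up to the factor $\frac{m}{2}$), and then bound $|B_{pq}+\frac{m}{2}\widehat{\Lambda}_{pq}|$ on $\mathcal{P}^{(0)}$ so that \cref{eqn:relaxc2_two} forces \cref{eqn:cond_two2}. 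There are two genuine differences. First, you read $\mathcal{D}^\perp$ directly as $\ker(D^T)$, i.e.\ flow conservation on the complete graph, whereas the paper goes through the row space of $H=(D^{m-1},I)$ and the system \cref{formula25_two}--\cref{formula26_two}, only later observing that \cref{formula25_two} is a sub-constraint of the full divergence condition \cref{eqn:restrictConst}; the two characterizations are equivalent, and yours is cleaner. Second, and more substantively, where the paper invokes \cite[Lemma 3]{zhu2014convex} as a black box to produce within-cluster entries with sup-norm at most $\frac{2}{m_i}$ times the maximal prescribed divergence, you construct them explicitly via the potential $\widehat{\Lambda}_{(i,j),q}=\frac{1}{m_s}(f_j-f_i)$; your verification (divergence $-f_k$ at each vertex, using that the $f_k$ have zero mean over the cluster, plus the bound $\frac{2}{m_s}\max_k|f_k|$) reproduces precisely the conclusion of that cited lemma, so your argument is self-contained where the paper's is not. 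Your constants even come out slightly tighter: the coefficient $\frac{2m_i+4m_{1-i}}{mm_i}$ you obtain sits below $\epsilon_i=\frac{8m_{1-i}(m_i-1)+4m_i^2}{mm_i^2}$ whenever $m_i\geq 2$, and the case $m_i=1$ is vacuous since $\mathcal{P}^{(0)}$ then contains no pairs from cluster $i$. The only detail you gloss over is the sign convention on $\mathcal{S}^{(1)}$: there the cross edges $(i,k)$ with $i<k$ enter the divergence with a minus sign, so the required within-cluster divergence is $+g_k$ rather than $-f_k$; the potential construction absorbs this with the obvious sign change, so this is a presentational gap, not a mathematical one.
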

\begin{proof}
We will establish that if $\{\widehat{\Lambda}_{pq}\}_{p\in\mathcal{P}^{(1)}}$ satisfies \cref{eqn:cond_two10,eqn:cond_two11}, then there exists $\{\widehat{\Lambda}_{pq}\}_{p\in\mathcal{P}^{(0)}}$,  such that $\widehat{\Lambda}$ belongs to $\mathcal{D}^\perp$, and
		\begin{equation}
		\label{eqn:compare_two}
		\begin{split}
		|\mu_{pq}|\leq& \max\limits_{0\leq i\leq 1}
		\left[\frac{m\epsilon_i\dia(\mathcal{S}^{(i)})}{2}+\frac{2c mm_{1-i}}{m_i}\max_{j\in\mathcal{P}^{(1)}}(\gamma_j)\right]
		\end{split}
		\end{equation}
		for any $ p\in \mathcal{P}^{(0)}$. Combined \cref{eqn:compare_two} with Condition \cref{eqn:relaxc2_two}, we can easily get the conclusion. 
		
		Now we focus on constructing $\hat{\Lambda}\in \mathcal{D}^\perp$ to meet \cref{eqn:compare_two}.
		
		Since $Y\in \mathcal{D}$ if and only if $HY=0$, $\mathcal{D}^\perp$ is exactly the row space of $H$, where $H=(D^{m-1},I)$. Then by direct calculation, ${\widehat{\Lambda}}^T\in \mathcal{D}^\perp$ is equivalent to,
		
		\begin{equation}\label{formula25_two}
		\left( \begin{array}{cc}
		-I_{(m_0-1)\times(m_0-1)},(D^{m_0-1})^T
		\end{array} \right){\hat\Lambda}_{[{\mathcal{P}}_0^{(0)},q]}
		=-\left( \begin{array}{cccc}
		\underset{p\in\mathcal{P}^{(1)}_2}\sum{\hat\Lambda}_{pq},
		\underset{p\in\mathcal{P}^{(1)}_3}\sum{\hat\Lambda}_{pq},
		\dots,
		\underset{p\in\mathcal{P}^{(1)}_{m_0}}\sum{\hat\Lambda}_{pq}
		\end{array} \right)^T
		\end{equation}\\
		and
		\begin{equation}
		\label{formula26_two}
		(D^{m_1})^T{\widehat{\Lambda}}_{[{\mathcal{P}}_1^{(0)},q]}=\sum_{j=1}^{m_0}{\widehat{\Lambda}}_{[{\mathcal{P}}^{(1)}_j,q]},
		\end{equation}
where
\[
{\mathcal{P}}^{(0)}_j=\{(s-1)(m-1)+(l-s)\ |\ s,l\in \mathcal{I}^{(j)}\ with\ s<l\}
\] 
and
\[
{\mathcal{P}}^{(1)}_j=\{(j-1)(m-1)+(i-j)\ |\ i\in \mathcal{I}^{(1)}\}
\] is the subset of $\mathcal{P}^{(1)}$, with $\cup_{l=1}^{m_0}\mathcal{P}^{(1)}_l=\mathcal{P}^{(1)}$. ${\widehat{\Lambda}}_{[{\mathcal{P}}_i^{(j)},q]}$ denotes the column vector whose elements are $\{\widehat{\Lambda}_{pq}\}_{p\in \mathcal{P}_i^{(j)}}$.
		The summation of all elements in the left side of  \cref{formula26_two} is zero, so there exists $\widehat{\Lambda}$ satisfying \cref{formula25_two} and \cref{formula26_two} only if $\sum_{p\in\mathcal{P}^{(1)}}\hat{\Lambda}_{pq}=0$.
		
		For $p\in\mathcal{P}^{(1)}$, by conditions \cref{eqn:cond_two10,eqn:cond_two11}, we have
		\[
	\widehat{\Lambda}_{pq}=\frac{2}{m}\tau_q-\frac{2}{m}B_{pq}-{c}(\rho-\gamma_{p}){\sign(\tau_q)}.
		\]
		Then
		\begin{eqnarray*}
		\frac{m}{2}{\sum_{p\in{\mathcal{P}}_t^{(1)}}\widehat{\Lambda}_{pq}}
		&=&\left({m_1}\tau_q-\sum_{p\in {\mathcal{P}}_t^{(1)}}B_{pq}\right)-\frac{mc}{2}\left(m_1\rho-\sum_{p\in\mathcal{P}^{(1)}_t}\gamma_p\right){\sign(\tau_q)}\\
		&=&-\frac{mcm_1}{2}\left(\rho-\frac{1}{m_1}\sum_{p\in\mathcal{P}^{(1)}_t}\gamma_p\right){\sign(\tau_q)}+\left(\frac{m_1}{m_0}\sum_{i\in\mathcal{I}^{(0)}}A_{iq}-m_1A_{tq}\right).
		\end{eqnarray*}
Obviously, $\sum_{t=1}^{m_0}{\sum_{p\in{\mathcal{P}}_t^{(1)}}\widehat{\Lambda}_{pq}}=0$ and
		\[
		\left\|\frac{m}{2}{\widehat{\Lambda}}_{[{\mathcal{P}}_t^{(1)},q]}\right\|_\infty\leq \frac{m_1(m_0-1)}{m_0}\dia(\mathcal{S}^{(0)})+{cm}{m_1}\max_{p\in \mathcal{P}^{(1)}}(\gamma_p)
		\]
		for any $t\in \mathcal{I}^{(0)}$.
		
	Considering the constraint 
	\begin{equation}
	\label{eqn:restrictConst}
	(D^{m_0})^T{\widehat{\Lambda}}_{[{\mathcal{P}}_0^{(0)},q]}=-\left( \begin{array}{cccc}
		\underset{p\in\mathcal{P}^{(1)}_1}\sum{\hat\Lambda}_{pq},
		\underset{p\in\mathcal{P}^{(1)}_2}\sum{\hat\Lambda}_{pq},
		\dots,
		\underset{p\in\mathcal{P}^{(1)}_{m_0}}\sum{\hat\Lambda}_{pq}
		\end{array} \right)^T,	\end{equation}
	we can see that  \cref{formula25_two} is the sub-constraint of \cref{eqn:restrictConst}.	Therefore, we need to find proper $\widehat\Lambda$ obeying \cref{eqn:restrictConst,formula26_two}.
	
	By \cite[Lemma 3]{zhu2014convex}, given $C_n\in \mathbb{R}^n$, \ie $C_n=(c_1,c_2,\dots,c_n)^T$ such that  $\sum_{i=1}^n c_i=0$, there exists $X\in \mathbb{R}^{C_n^2}$ with $\|X\|_{\infty}\leq\frac{2}{n}M$ and $(D^n)^TX=C_n$, where  $\|C_n\|_{\infty}\leq M$. 
So there exist $\widehat{\Lambda}_{pq}(\ p\in\mathcal{P}^{(0)}_0)$ satisfying \eqref{formula25_two} and
		\[
		\left|\frac{m}{2}{\widehat{\Lambda}}_{pq}\right|\leq
		\frac{2m_1(m_0-1)}{m_0^2}\dia(\mathcal{S}^{(0)})+\frac{2cmm_1}{m_0}\max_{i\in \mathcal{P}^{(1)}}(\gamma_i).
		\]
		Furthermore, we have
		\begin{equation}
		\label{eqn:I0constraint}
		\left|B_{pq}+\frac{m}{2}\widehat{\Lambda}_{pq}\right|\leq
		\frac{4m_1(m_0-1)+2m_0^2}{m_0^2}\dia(\mathcal{S}^{(0)})+\frac{2c mm_1}{m_0}\max_{i\in\mathcal{P}^{(1)}}(\gamma_i),
		\end{equation}
for $p\in \mathcal{P}_0^{(0)}$.

By the symmetry of $\widehat{\Lambda}_{pq}(p\in\mathcal{P}^{(0)}_0)$ and $\widehat{\Lambda}_{pq}(p\in\mathcal{P}^{(0)}_1)$, there exist $\widehat{\Lambda}_{pq}(p\in\mathcal{P}^{(0)}_1)$  satisfying \eqref{formula26_two}, and
		\begin{equation}
		\label{eqn:I1constraint}
		\left|B_{pq}+\frac{m}{2}\widehat{\Lambda}_{pq}\right|\leq
		\frac{4m_0(m_1-1)+2m_1^2}{m_1^2}\dia(\mathcal{S}^{(1)})
		+\frac{2c mm_0}{m_1}\max_{i\in\mathcal{P}^{(1)}}(\gamma_i),
		\end{equation}
for $p\in\mathcal{P}_1^{(0)}$.

Therefore, \cref{eqn:I0constraint,eqn:I1constraint} meets the condition \cref{eqn:compare_two}.
\end{proof}

Now, we begin to prove \cref{theorem_twobody}.
\begin{proof}[Proof of \cref{theorem_twobody}]
If there exist $c$ and $r$ such that they can guarantee the conditions \cref{eqn:ec_two,eqn:cond_two102,eqn:relaxc2_two} in \cref{lem:sufficient_two1,lem:sufficient_two2}, we have optimal solution satisfying exact clustering property. 

Here the condition \cref{eqn:relaxc2_two} is equivalent to 
\begin{equation}
	\label{eqn:condf3_two}
	\gamma_p-\frac{4 (m-m_i)}{m_i}\max_{j\in\mathcal{P}^{(1)}}(\gamma_j)>0
	\end{equation}
	and
	\begin{equation}
	\label{eqn:condf4_two}
	\frac{\epsilon_i\dia(\mathcal{S}^{(i)})}{\gamma_p -4\frac{(m-m_i)}{m_i}\max_{j\in\mathcal{P}^{(1)}}(\gamma_j)}<c,
	\end{equation}
for any $p\in\mathcal{P}^{(0)}$ and $i\in \{0,1\}$.
	Recall that $\gamma_i$ is defined as 
	\[
	\gamma_i=\exp({-r\|(DA)_i\|_2^2}).
	\]
	Then the separation property 
	\[
	\dist(\mathcal{S}^{(0)},\mathcal{S}^{(1)})>\max\{\dia(\mathcal{S}^{(0)}), \dia(\mathcal{S}^{(1)})\}
	\]
	implies that
	\[
	\lim\limits_{r\rightarrow+\infty}\max\limits_{i\in\mathcal{P}^{(1)}}(\gamma_i)=0 ~\mbox{and}~\lim\limits_{r\rightarrow+\infty}\frac{\min_{p'\in \mathcal{P}^{(1)}}\gamma_{p'}}{\max_{p\in \mathcal{P}^{(0)}}\gamma_{p}}=0.\]
If there exist $c,r$ such that
\begin{equation}
	\label{eqn:existc1_two}
	\frac{|\tau_q|}{\epsilon_i\dia(\mathcal{S}^{(i)})}>\frac{\max_{p'\in \mathcal{P}^{(1)}}|\frac{m}{2}(\rho-\gamma_{p'})|}{\gamma_p-\frac{4m_{1-i}}{m_i}\max_{i\in\mathcal{P}^{(1)}}(\gamma_i)}
	\end{equation}
for any $q\not\in\mathcal{Q}$ and $p\in \mathcal{P}^{(0)}$, then the inequalities \cref{eqn:ec_two} and \cref{eqn:condf4_two} hold true.
When taking $r\rightarrow+\infty$ on both sides of \cref{eqn:existc1_two}, the left hand side converges to  a finite constant and  the right hand side goes to  zero. 

By similar arguments, there exists a positive constant $r_0$ such that \cref{eqn:ec_two,eqn:cond_two102,eqn:condf3_two,eqn:condf4_two} hold true for any $r>r_0$.
\end{proof}

\subsection{Proof of \cref{theorem1}}
\label{sec:prf of three-cluster}
 The sufficient condition for exact clustering is similar as $2$-cluster case.
\begin{theorem}[sufficient condition for $k$-cluster]
\label{thm:suf_three}
 Assume $c,r\in \mathbb{R}^+$ and $\widehat{\Lambda}^T\in \mathcal{D}^\perp$.   $\vec{\textbf{e}}^{s,l}(0\leq s<l\leq k-1)$ are nonzero column vectors in $\mathbb{R}^n$. $\mathcal{P}^{(0)}$ and $\mathcal{P}^{(s,l)}$ are defined in \cref{eqn:P0} and \cref{eqn:Pkl}. Suppose for $p\in\mathcal{P}^{(s,l)}(0\leq s<l\leq k-1)$, $\widehat\Lambda_{pq}$ satisfies	
	\begin{subequations}
	\begin{align}
	&c\gamma_{p}\leq \frac{2}{m}|B_{pq}+\frac{m}{2}\widehat{\Lambda}_{pq}|,\label{eqn:cond10}\\
	&(B_{pq}+\frac{m}{2}\widehat{\Lambda}_{pq})-\frac{mc \gamma_{p}}{2}\sign(B_{pq}+\frac{m}{2}\widehat{\Lambda}_{pq})=\vec{\textbf{e}}^{s,l}_q,\label{eqn:cond11}
	\end{align}
	\end{subequations}
		 and for $p\in\mathcal{P}^{(0)}$, we have
		\begin{equation}
		\label{eqn:cond2}
		c\gamma_p>\frac{2}{m}|B_{pq}+\frac{m}{2}\widehat{\Lambda}_{pq}|.
		\end{equation}
		Besides, $\vec{\textbf{e}}$ meet the condition
		\begin{equation}
	\label{eqn:cond3}
		\vec{\textbf{e}}^{s,l}_q+\vec{\textbf{e}}^{l,m}_q=\vec{\textbf{e}}^{s,m}_q
		\end{equation}
		for all $0\leq s<l<m\leq k-1$.
		
Then $\widehat{\Lambda}$ is the optimal dual variable in \cref{eqn:dual}, and $\widehat{Y}$ can be derived respectively. Together with the fact that $\widehat{Y}=D\widehat{X}$, $\hat{X}$ satisfies the exact clustering property.
\end{theorem}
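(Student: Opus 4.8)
The plan is to mirror the $2$-cluster argument of \cref{thm:suf_two}, the only genuinely new ingredient being the verification that the block-constant $\widehat Y$ we read off actually lies in $\mathcal D$, which is exactly where hypothesis \cref{eqn:cond3} enters. First I would solve the inner minimization in the dual optimality characterization \cref{eqn:dual} entrywise. Since the objective $\frac1m\|B_p-Y_p\|_2^2+c\gamma_p\|Y_p\|_1-Y_p\Lambda_p^T$ separates across coordinates $q$, each $\widehat Y_{pq}$ minimizes a one-dimensional soft-thresholding problem; setting the subgradient to zero yields exactly formula \cref{eqn:proof2}, namely $\widehat Y_{pq}=(B_{pq}+\tfrac m2\widehat\Lambda_{pq})-\tfrac{mc\gamma_p}{2}\sign(B_{pq}+\tfrac m2\widehat\Lambda_{pq})$ when $c\gamma_p\le\tfrac2m|B_{pq}+\tfrac m2\widehat\Lambda_{pq}|$, and $\widehat Y_{pq}=0$ otherwise. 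This computation is identical to the one in \cref{thm:suf_two} and does not yet invoke the constraint $\widehat\Lambda^T\in\mathcal D^\perp$.

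Next I would read off the block structure from the hypotheses. For $p\in\mathcal P^{(s,l)}$, condition \cref{eqn:cond10} places us in the nonzero branch of \cref{eqn:proof2}, and condition \cref{eqn:cond11} then forces $\widehat Y_{pq}=\vec{\textbf{e}}^{s,l}_q$; since $\vec{\textbf{e}}^{s,l}$ is a fixed vector independent of $p$, the whole block is the common nonzero vector $\widehat Y_p=\vec{\textbf{e}}^{s,l}$. For $p\in\mathcal P^{(0)}$, condition \cref{eqn:cond2} places us in the zero branch, so $\widehat Y_p=\vec{\textbf{0}}$. Thus $\widehat Y$ attains the target form \cref{eqn:solutionform}.

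The crux, and the step I expect to be the main obstacle relative to the $2$-cluster case, is to confirm that this $\widehat Y$ genuinely belongs to $\mathcal D$, i.e.\ that it is a matrix of pairwise row differences $D\widehat X$ for some $\widehat X$. For this the differences must satisfy the additivity (cocycle) relation $\widehat Y_{i,j}+\widehat Y_{j,l'}=\widehat Y_{i,l'}$ over every triple of indices $i,j,l'$. When the three indices lie in at most two clusters this holds automatically ($\vec{\textbf{0}}+\vec{\textbf{0}}=\vec{\textbf{0}}$, or $\vec{\textbf{0}}+\vec{\textbf{e}}^{s,l}=\vec{\textbf{e}}^{s,l}$). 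The only nontrivial case is three pairwise-distinct clusters $s,l,t$, where additivity reduces coordinatewise to $\vec{\textbf{e}}^{s,l}_q+\vec{\textbf{e}}^{l,t}_q=\vec{\textbf{e}}^{s,t}_q$, which is precisely hypothesis \cref{eqn:cond3} (the hidden constraint \cref{eqn:hidconstraint} made explicit). Hence \cref{eqn:cond3} is exactly what guarantees the existence of some $\widehat X$ with $D\widehat X=\widehat Y$; this is the feature absent in the $2$-cluster setting, where no triple of distinct clusters occurs and membership in $\mathcal D$ is automatic.

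Finally I would conclude. Having $\widehat Y\in\mathcal D$ together with the standing assumption $\widehat\Lambda^T\in\mathcal D^\perp$, the pair $(\widehat Y,\widehat\Lambda)$ satisfies the primal--dual optimality characterization \cref{eqn:dual}, so $\widehat Y$ is the minimizer of \cref{formula5}. By \cref{thm:lemma2}, $\widehat Y=D\widehat X$ with $\widehat X$ the minimizer of \cref{eqn:weightconvex}. The block form of $\widehat Y$ then reads off the exact clustering property directly: $\widehat X_i-\widehat X_j=\vec{\textbf{0}}$ whenever $i,j$ lie in the same cluster, while $\widehat X_i-\widehat X_j=\vec{\textbf{e}}^{s,l}\ne\vec{\textbf{0}}$ when they lie in distinct clusters $\mathcal S^{(s)},\mathcal S^{(l)}$, so $\widehat X_i=\widehat X_j$ holds if and only if $A_i,A_j$ belong to the same cluster.
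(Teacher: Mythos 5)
Your proposal is correct and takes essentially the same route as the paper: the paper proves \cref{thm:suf_two} by exactly this entrywise soft-thresholding argument applied to \cref{eqn:dual}, and treats the $k$-cluster case as the direct analogue, with \cref{eqn:cond3} playing precisely the role you identify. The one step the paper leaves implicit---that \cref{eqn:cond3} is the cocycle (additivity) condition guaranteeing the block-constant $\widehat{Y}$ actually lies in $\mathcal{D}$, i.e.\ the ``hidden constraint'' \cref{eqn:hidconstraint} made explicit---is supplied correctly in your write-up.
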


The condition \cref{eqn:cond3} is not seen in $2$-cluster case, which leads to different primal dual $(\widehat{Y},\widehat{\Lambda})$ construction for $k\geq 3$. 
For simplicity of notation, we only consider $k=3$ in \cref{theorem1} and the general case can be derived by the same method.

When $k=3$, under the condition in \cref{thm:suf_three}, we have 
\[\vec{\textbf{e}}^{0,1}+\vec{\textbf{e}}^{1,2}=\vec{\textbf{e}}^{0,2},\] and  $\widehat{X}$ has exact clustering property with
	\[\widehat{X}_{i}=
	\left\{ \begin{array}{l}
	\frac{m_1}{m}\vec{\textbf{e}}^{0,1}+\frac{m_2}{m}\vec{\textbf{e}}^{0,2}, \quad i\in\mathcal{I}^{(0)},\\
	\frac{m_1}{m}\vec{\textbf{e}}^{0,1}+\frac{m_2}{m}\vec{\textbf{e}}^{0,2}-\vec{\textbf{e}}^{0,1}, \quad i\in\mathcal{I}^{(1)},\\
	\frac{m_1}{m}\vec{\textbf{e}}^{0,1}+\frac{m_2}{m}\vec{\textbf{e}}^{0,2}-\vec{\textbf{e}}^{0,2}, \quad i\in\mathcal{I}^{(2)}.
	\end{array} \right.
	\]
Here we define
\[
	\left\{
	\begin{array}{ll}
	\epsilon_i=\frac{8(m-m_i)(m_i-1)+4m_i^2}{mm_i^2}\\
	\mu_{pq}=B_{pq}+\frac{m}{2}\widehat{\Lambda}_{pq}\\
	\tau^{s,l}_q=\frac{1}{m_km_l}\sum_{i\in\mathcal{P}^{(s,l)}}B_{iq}\\
	\rho^{s,l}=\frac{1}{m_km_l}\sum_{i\in\mathcal{P}^{(s,l)}}\gamma_{i}
	\end{array}
	\right.
	\]
	where $1\leq p \leq {m\choose 2}, 1\leq q\leq n, 0\leq i\leq 2, 0\leq s<l\leq 2
	$.
	
	For $p\in \mathcal{P}^{(1)}$ and $1\leq q\leq n$, construct specific $\widehat\Lambda_{pq}$ as 
	\begin{equation}
	\label{eqn:mu_pq}
	\frac{m}{2}\widehat\Lambda_{pq}=
	\left\{ \begin{array}{l}
	-B_{pq}+\tau^{0,1}_q+\frac{mc}{2}(\gamma_{p}-\rho^{0,1})\sign(\tau_q^{0,1})+m_2H_q,
	\quad \mbox{if}\ p\in\mathcal{P}^{(0,1)},\\
	-B_{pq}+\tau^{1,2}_q+\frac{mc}{2}(\gamma_{p}-\rho^{1,2})\sign(\tau_q^{1,2})+m_0H_q,
	\quad \mbox{if}\ p\in\mathcal{P}^{(1,2)},\\
	-B_{pq}+\tau^{0,2}_q+\frac{mc}{2}(\gamma_{p}-\rho^{0,2})\sign(\tau_q^{0,2})-m_1H_q,
	\quad \mbox{if}\ p\in\mathcal{P}^{(0,2)},\\
	\end{array} \right.
	\end{equation}
	where
	\[
	H_q=\frac{c}{2}\left(\rho^{0,1}\sign(\tau_q^{0,1})+\rho^{1,2}\sign(\tau_q^{1,2})-\rho^{0,2}\sign(\tau_q^{0,2})\right).
	\]

\cref{lem:suf_three1,lem:sufficient_three2} show proper sufficient condition of $\widehat{\Lambda}_{pq}$ to meet the Conditions \cref{eqn:cond10,eqn:cond11,eqn:cond2,eqn:cond3} in \cref{thm:suf_three}.
\begin{lemma}
\label{lem:suf_three1}
For each pair of $(s,l)$, suppose there exist $c,r\in \mathbb{R}^+$ such that
\begin{equation}
	\label{eq:eqn141}
	\left|\tau_q^{s,l}\right|-\frac{3mc}{2}\max_{i\in \mathcal{P}^{(1)}}\gamma_i>\max_{p\in \mathcal{P}^{(s,l)}}\frac{mc}{2}\left|\rho^{s,l}-\gamma_p\right|
	\end{equation}
	and
	\begin{equation}
	\label{eq:eqn151}
	\left|\tau_q^{s,l}\right|-\frac{3mc}{2}\max_{i\in \mathcal{P}^{(1)}}\gamma_i>\frac{mc}{2}\rho^{s,l}
	\end{equation}
	For $p\in \mathcal{P}^{(1)}$, $\widehat\Lambda_{pq}$ is defined in \cref{eqn:mu_pq}. Then $\widehat\Lambda$ meet the conditions \cref{eqn:cond10,eqn:cond11,eqn:cond3}. 
	\end{lemma}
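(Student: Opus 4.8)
The plan is to verify the three conditions \cref{eqn:cond10,eqn:cond11,eqn:cond3} directly from the explicit construction \cref{eqn:mu_pq}, handling the three blocks $\mathcal{P}^{(0,1)},\mathcal{P}^{(1,2)},\mathcal{P}^{(0,2)}$ of $\mathcal{P}^{(1)}$ in parallel. First I would substitute \cref{eqn:mu_pq} into $\mu_{pq}=B_{pq}+\frac{m}{2}\widehat{\Lambda}_{pq}$; the $-B_{pq}$ cancels, so for $p\in\mathcal{P}^{(s,l)}$ one is left with the clean form
\[
\mu_{pq}=\tau^{s,l}_q+\frac{mc}{2}(\gamma_p-\rho^{s,l})\sign(\tau^{s,l}_q)+m_t H_q,
\]
where $m_t\in\{m_2,m_0,-m_1\}$ for the three blocks. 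The linchpin observation is that $\tau^{s,l}_q$ is exactly a difference of cluster means: writing each row of $B=DA$ as $A_i-A_j$ with $i\in\mathcal{I}^{(s)},j\in\mathcal{I}^{(l)}$, one gets $\tau^{s,l}_q=(A^{(s)}_c)_q-(A^{(l)}_c)_q$, so the $\tau$'s telescope, $\tau^{0,1}_q+\tau^{1,2}_q=\tau^{0,2}_q$, which is the hidden constraint \cref{eqn:hidconstraint}.

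Next I would pin down $\sign(\mu_{pq})$. Since each $\rho^{s,l}$ is an average of $\gamma$'s over a subset of $\mathcal{P}^{(1)}$, one has $|H_q|\leq\frac{3c}{2}\max_{i\in\mathcal{P}^{(1)}}\gamma_i$, and $|m_t|\leq m$, so the perturbation of $\mu_{pq}$ away from $\tau^{s,l}_q$ is bounded in modulus by $\frac{mc}{2}|\gamma_p-\rho^{s,l}|+\frac{3mc}{2}\max_{i\in\mathcal{P}^{(1)}}\gamma_i$. Hypothesis \cref{eq:eqn141} states precisely that this is strictly below $|\tau^{s,l}_q|$ (which in particular forces $\tau^{s,l}_q\neq 0$), so $\sign(\mu_{pq})=\sign(\tau^{s,l}_q)$ for every $p\in\mathcal{P}^{(s,l)}$.

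With the sign fixed the rest is bookkeeping. For \cref{eqn:cond11}, subtracting $\frac{mc\gamma_p}{2}\sign(\tau^{s,l}_q)$ from $\mu_{pq}$ cancels the $\gamma_p$ term and leaves the $p$-independent value $\vec{\textbf{e}}^{s,l}_q=\tau^{s,l}_q-\frac{mc}{2}\rho^{s,l}\sign(\tau^{s,l}_q)+m_t H_q$, which is exactly what \cref{eqn:cond11} requires. For \cref{eqn:cond10}, the same sign identity gives $|\mu_{pq}|=|\tau^{s,l}_q|+\frac{mc}{2}(\gamma_p-\rho^{s,l})+m_t H_q\sign(\tau^{s,l}_q)$; bounding the last term below by $-\frac{3mc}{2}\max_{i}\gamma_i$, the inequality $|\mu_{pq}|\geq\frac{mc\gamma_p}{2}$ reduces exactly to \cref{eq:eqn151}. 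Finally, for \cref{eqn:cond3} I would add $\vec{\textbf{e}}^{0,1}_q+\vec{\textbf{e}}^{1,2}_q$ and subtract $\vec{\textbf{e}}^{0,2}_q$: the $\tau$ part vanishes by the mean-difference telescoping; the $\rho$-sign part equals $-mH_q$ by the very definition $H_q=\frac{c}{2}(\rho^{0,1}\sign(\tau^{0,1}_q)+\rho^{1,2}\sign(\tau^{1,2}_q)-\rho^{0,2}\sign(\tau^{0,2}_q))$; and the coefficients of $H_q$ sum to $m_2+m_0+m_1=m$, producing $+mH_q$, so the total is zero.

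The main obstacle is the design rather than the verification: one must recognize that the correction $H_q$ is \emph{forced} by the compatibility constraint \cref{eqn:cond3}, and that its three coefficients $m_2,m_0,-m_1$ are chosen precisely so they sum to $m$ and reconstitute $mH_q$, thereby cancelling the residual left by the $\rho$-sign terms. Once $H_q$ is in place each condition follows from a one-line estimate, with \cref{eq:eqn141} governing the sign and \cref{eq:eqn151} governing the magnitude; the identity $\tau^{s,l}_q=(A^{(s)}_c)_q-(A^{(l)}_c)_q$ is what makes \cref{eqn:cond3} hold for free.
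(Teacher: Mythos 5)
Your proposal is correct and follows essentially the same route as the paper's proof: bound $|H_q|$ by $\frac{3c}{2}\max_{i\in\mathcal{P}^{(1)}}\gamma_i$, use \cref{eq:eqn141} to force $\sign(\mu_{pq})=\sign(\tau^{s,l}_q)$, use \cref{eq:eqn151} for the magnitude condition \cref{eqn:cond10}, and read off the $p$-independent vectors $\vec{\textbf{e}}^{s,l}_q=\tau^{s,l}_q-\frac{mc}{2}\rho^{s,l}\sign(\tau^{s,l}_q)+m_tH_q$ for \cref{eqn:cond11,eqn:cond3}. Your write-up is in fact slightly more complete than the paper's, since you explicitly verify \cref{eqn:cond3} via the telescoping identity $\tau^{0,1}_q+\tau^{1,2}_q=\tau^{0,2}_q$ and the cancellation $m_0+m_1+m_2=m$, steps the paper leaves implicit.
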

	\begin{proof}
	By the definition of $H_q$, we have 
	\[
	|H_q|\leq \frac{3c}{2}\max_{i\in \mathcal{P}^{(1)}}\gamma_i.
	\]
	Combining with the condition \cref{eq:eqn141}, we get 
	\begin{equation}
	\label{eqn:inequ14}
	|\tau_q^{s,l}|>\max_{p\in \mathcal{P}^{(s,l)}}\frac{mc}{2}\left|\rho^{s,l}-\gamma_{p}\right|+m|H_q|.
	\end{equation}
	Then $\sign(\mu_{pq})=\sign(\tau^{s,l}_q)$ for $p\in \mathcal{P}^{(s,l)}$.

Secondly, provided that \cref{eq:eqn141,eq:eqn151} holding true, we obtain
\begin{equation}
	\label{conditionI1}
	\left|\tau^{0,1}_q+\frac{mc}{2}(\gamma_{p}-\rho^{0,1})\sign(\tau_q^{0,1})+m_2H_q\right|\geq \frac{m}{2}c\gamma_p,\quad p\in \mathcal{P}^{(0,1)},
	\end{equation}
	\begin{equation}
	\label{conditionI2}
	\left|\tau^{1,2}_q+\frac{mc}{2}(\gamma_{p}-\rho^{1,2})\sign(\tau_q^{1,2})+m_0H_q\right|\geq \frac{m}{2}c\gamma_p,\quad p\in \mathcal{P}^{(1,2)},
	\end{equation}
and\begin{equation}
	\label{conditionI3}
	\left|\tau^{0,2}_q+\frac{mc}{2}(\gamma_{p}-\rho^{0,2})\sign(\tau_q^{0,2})-m_1H_q\right|\geq \frac{m}{2}c\gamma_p, \quad p\in \mathcal{P}^{(0,2)}.
	\end{equation}
	Thus the inequality \eqref{eqn:cond10} holds true, and
	\[
	\left\{
	\begin{array}{ll}
	\vec{\mathbf{e}}_q^{0,1}=\tau^{0,1}_q-\frac{mc}{2}\rho^{0,1}\sign(\tau_q^{0,1})+m_2H_q,\\
	\vec{\mathbf{e}}_q^{1,2}=\tau^{1,2}_q-\frac{mc}{2}\rho^{1,2}\sign(\tau_q^{1,2})+m_0H_q,\\
	\vec{\mathbf{e}}_q^{0,2}=\tau^{0,2}_q-\frac{mc}{2}\rho^{0,2}\sign(\tau_q^{0,2})-m_1H_q,
	\end{array}
	\right.
	\]
	to get \cref{eqn:cond11,eqn:cond3}.
	\end{proof}
	
	\begin{lemma}
\label{lem:sufficient_three2}
Assume $\{\widehat{\Lambda}_{pq}\}_{p\in\mathcal{P}^{(s,l)},1\leq q\leq n}$ defined in \cref{eqn:mu_pq} satisfies \cref{eqn:cond10,eqn:cond11,eqn:cond3}. If
\begin{equation}
	\label{eqn:relaxc2}
	\begin{split}
	&\max_{0\leq i\leq 2}\left[{{\epsilon_i}\dia(\mathcal{S}^{(i)})}+\frac{4c (m-m_i)}{m_i}\max_{j\in\mathcal{P}^{(1)}}(\gamma_j)\right]<c\gamma_p,
	\end{split}
	\end{equation}
	for all $p\in \mathcal{P}^{(0)}$. Then there exist $c,r\in\mathbb{R}^+$ and $\widehat{\Lambda}\in\mathcal{D}^\perp$ satisfying Condition \cref{eqn:cond2}.
\end{lemma}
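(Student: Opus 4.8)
The plan is to mirror the structure of the proof of \cref{lem:sufficient_two2} for the $2$-cluster case, the only genuinely new feature being that each cluster now carries inter-cluster differences to \emph{two} other clusters and that the three inter-cluster blocks are coupled through the constraint \cref{eqn:cond3}. First I would reduce the target condition \cref{eqn:cond2} to a uniform bound on $|\mu_{pq}|=|B_{pq}+\frac{m}{2}\widehat{\Lambda}_{pq}|$ over the intra-cluster indices $p\in\mathcal{P}^{(0)}$: it suffices to construct $\{\widehat{\Lambda}_{pq}\}_{p\in\mathcal{P}^{(0)}}$ so that $\widehat{\Lambda}^T\in\mathcal{D}^\perp$ and
\[
|\mu_{pq}|\leq \max_{0\leq i\leq 2}\left[\frac{m\epsilon_i\dia(\CS^{(i)})}{2}+\frac{2cm(m-m_i)}{m_i}\max_{j\in\mathcal{P}^{(1)}}(\gamma_j)\right]
\]
for every $p\in\mathcal{P}^{(0)}$, since combining this estimate with \cref{eqn:relaxc2} immediately yields $c\gamma_p>\frac{2}{m}|\mu_{pq}|$. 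The values $\{\widehat{\Lambda}_{pq}\}_{p\in\mathcal{P}^{(1)}}$ are already pinned down by \cref{eqn:mu_pq}, so the whole task is the \emph{extension} of $\widehat{\Lambda}$ to the intra-cluster part while staying in $\mathcal{D}^\perp$.

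Next I would characterize $\mathcal{D}^\perp$. By \cref{def:total}, $Y\in\mathcal{D}$ iff $HY=0$ with $H=(D^{m-1},I)$, so $\mathcal{D}^\perp$ is the row space of $H$ and the membership $\widehat{\Lambda}^T\in\mathcal{D}^\perp$ decouples into one linear system per cluster, namely $(D^{m_i})^T\widehat{\Lambda}_{[\mathcal{P}^{(0)}_i,q]}$ equals a vector whose $t$-th entry is the sum of the inter-cluster multipliers $\widehat{\Lambda}_{pq}$ attached to the $t$-th row of cluster $i$. Such a system is solvable precisely when its right-hand vector has vanishing coordinate sum, which reduces to the mutual consistency of $\sum_{p\in\mathcal{P}^{(0,1)}}\widehat{\Lambda}_{pq}$, $\sum_{p\in\mathcal{P}^{(1,2)}}\widehat{\Lambda}_{pq}$ and $\sum_{p\in\mathcal{P}^{(0,2)}}\widehat{\Lambda}_{pq}$. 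Here the correction term $H_q$ in \cref{eqn:mu_pq} is exactly what enforces the needed cancellation: substituting the explicit forms of $\widehat{\Lambda}_{pq}$ on the three inter-cluster blocks, the $\tau$- and $\rho$-terms telescope while the $H_q$-terms (weighted by $m_2$, $m_0$ and $-m_1$) cancel the residual, so that $\sum_{p\in\mathcal{P}^{(1)}}\widehat{\Lambda}_{pq}=0$ and each per-cluster system is solvable. Verifying this cancellation is the main obstacle, since it is where the three-cluster coupling \cref{eqn:cond3} and the role of $H_q$ must be reconciled with the solvability condition of the $\mathcal{D}^\perp$ system.

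Finally I would produce the quantitative bound. From \cref{eqn:mu_pq} the block sum $\frac{m}{2}\sum_{p\in\mathcal{P}^{(1)}_t}\widehat{\Lambda}_{pq}$ splits into a data part, which reduces to a difference between a cluster-$i$ row-average and a single cluster-$i$ row and is therefore controlled by $\dia(\CS^{(i)})$, and a weight part bounded by $cm(m-m_i)\max_{j\in\mathcal{P}^{(1)}}(\gamma_j)$; this is where the $2$-cluster quantity $m_{1-i}$ is replaced by $m-m_i$, matching the redefined $\epsilon_i=\frac{8(m-m_i)(m_i-1)+4m_i^2}{mm_i^2}$. Applying \cite[Lemma 3]{zhu2014convex} to each cluster block—which, given a zero-sum vector $C$ with $\|C\|_\infty\leq M$, returns $X$ with $(D^{m_i})^TX=C$ and $\|X\|_\infty\leq\frac{2}{m_i}M$—yields $\{\widehat{\Lambda}_{pq}\}_{p\in\mathcal{P}^{(0)}_i}$ in $\mathcal{D}^\perp$ with a controlled $\infty$-norm. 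Adding the contribution $|B_{pq}|\leq\dia(\CS^{(i)})$ then gives the displayed bound on $|\mu_{pq}|$, and \cref{eqn:relaxc2} finishes the proof. The bookkeeping here is heavier than in the $2$-cluster case but entirely parallel; the only real difficulty remains the consistency check of the previous step.
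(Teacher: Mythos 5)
Your plan coincides with the paper's proof in every structural respect: reduce \cref{eqn:cond2} to the uniform bound on $|\mu_{pq}|$ over $p\in\mathcal{P}^{(0)}$, decouple $\widehat{\Lambda}^T\in\mathcal{D}^\perp$ into one linear system per cluster, check solvability of each system, then apply \cite[Lemma 3]{zhu2014convex} blockwise and add the $|B_{pq}|\leq\dia(\CS^{(i)})$ contribution, with $m-m_i$ replacing the two-cluster $m_{1-i}$. However, the pivotal identity you propose to verify in your second step is false. Under \cref{eqn:mu_pq} the $\tau$- and $\rho$-terms do cancel within each block, but what remains is
\[
\sum_{p\in\mathcal{P}^{(0,1)}}\widehat{\Lambda}_{pq}=\sum_{p\in\mathcal{P}^{(1,2)}}\widehat{\Lambda}_{pq}=\frac{2m_0m_1m_2}{m}H_q,
\qquad
\sum_{p\in\mathcal{P}^{(0,2)}}\widehat{\Lambda}_{pq}=-\frac{2m_0m_1m_2}{m}H_q,
\]
so that $\sum_{p\in\mathcal{P}^{(1)}}\widehat{\Lambda}_{pq}=\frac{2m_0m_1m_2}{m}H_q$, which is nonzero whenever $H_q\neq 0$ (and $H_q$ is generically nonzero, since the $\rho$'s are positive and the signs are $\pm 1$). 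The vanishing of the \emph{total} sum over $\mathcal{P}^{(1)}$ is the solvability condition of the two-cluster case only; carrying it over to three clusters is exactly the trap that the hidden constraint \cref{eqn:cond3} creates.

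Fortunately, nothing downstream in your argument uses that identity, and the condition you also state — mutual consistency of the three block sums — is the correct one. Solvability of the per-cluster systems \cref{formula25,formula261,formula26} requires only that each right-hand side has zero coordinate sum, i.e.\ the three signed combinations vanish: $\sum_{\mathcal{P}^{(0,1)}}+\sum_{\mathcal{P}^{(0,2)}}=0$ for cluster $0$, $\sum_{\mathcal{P}^{(0,1)}}-\sum_{\mathcal{P}^{(1,2)}}=0$ for cluster $1$, and $\sum_{\mathcal{P}^{(0,2)}}+\sum_{\mathcal{P}^{(1,2)}}=0$ for cluster $2$. These do hold by the displayed identities: the $H_q$-contributions, weighted by $m_2$, $m_0$, $-m_1$, cancel pairwise within each per-cluster combination (indeed they cancel entrywise inside each per-cluster right-hand side, which is why the paper's $\ell_\infty$-bound on that right-hand side contains no $H_q$-term). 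Replace your false total-sum claim by this per-cluster cancellation and the rest of your proposal — the bound $\frac{(m-m_i)(m_i-1)}{m_i}\dia(\CS^{(i)})+cm(m-m_i)\max_{j\in\mathcal{P}^{(1)}}(\gamma_j)$ on each right-hand side, Zhu's Lemma 3, and the comparison with \cref{eqn:relaxc2} — is precisely the paper's proof.
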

\begin{proof}
We show that, if $\{\widehat{\Lambda}_{pq}\}_{p\in\mathcal{P}^{(s,l)}}$ satisfies \cref{eqn:cond10,eqn:cond11,eqn:cond3}, then there exists $\{\widehat{\Lambda}_{pq}\}_{p\in\mathcal{P}^{(0)}}$,  such that $\widehat{\Lambda}$ belongs to $\mathcal{D}^\perp$, and
		\begin{equation}
		\label{eqn:compare}
		\begin{split}
		|\mu_{pq}|\leq& \max\limits_{0\leq i\leq 2}
		\left[\frac{m\epsilon_i\dia(\mathcal{S}^{(i)})}{2}+\frac{2c m(m-m_i)}{m_i}\max_{j\in\mathcal{P}^{(1)}}(\gamma_j)\right]
		\end{split}
		\end{equation}
		for any $ p\in \mathcal{P}^{(0)}$. Combined \cref{eqn:compare} with Condition \cref{eqn:relaxc2}, we can  get the conclusion. 

By direct calculation, ${\widehat{\Lambda}}^T\in \mathcal{D}^\perp$ is equivalent to,
		
		\begin{equation}\label{formula25}
		\left( \begin{array}{cc}
		-I_{(m_0-1)\times(m_0-1)},(D^{m_0-1})^T
		\end{array} \right){\Lambda}_{[{\mathcal{P}}_0^{(0)},q]}
		=-\left( \begin{array}{cccc}
		\underset{p\in\mathcal{P}^{(0,1)}_2\cup\mathcal{P}^{(0,2)}_2}\sum{\Lambda}_{pq},
		\dots,
		\underset{p\in\mathcal{P}^{(0,1)}_{m_0}\cup\mathcal{P}^{(0,2)}_{m_0}}\sum{\Lambda}_{pq}
		\end{array} \right)^T
		\end{equation}
		\begin{equation}
		\label{formula261}
		(D^{m_1})^T{\widehat{\Lambda}}_{[{\mathcal{P}}_1^{(0)},q]}\\
		=-\left(\begin{array}{cccc}
		\underset{p\in\mathcal{P}^{(1,2)}_{m_0+1}}\sum{\Lambda}_{pq},
		\underset{p\in\mathcal{P}^{(1,2)}_{m_0+2}}\sum{\Lambda}_{pq},
		\dots,
		\underset{p\in\mathcal{P}^{(1,2)}_{m_0+m_1}}\sum{\Lambda}_{pq}
		\end{array}\right)^T
		+\sum_{j=1}^{m_0}{\widehat{\Lambda}}_{[{\mathcal{P}}^{(0,1)}_j,q]}
		\end{equation}
		\begin{equation}
		\label{formula26}
		(D^{m_2})^T{\widehat{\Lambda}}_{[{\mathcal{P}}_2^{(0)},q]}=\sum_{j=1}^{m_0}{\widehat{\Lambda}}_{[{\mathcal{P}}^{(0,2)}_j,q]}+\sum_{j=m_0+1}^{m_0+m_1}{\widehat{\Lambda}}_{[{\mathcal{P}}^{(1,2)}_j,q]},
		\end{equation}
where \[
\mathcal{P}^{(0)}_k=\{(i-1)(m-1)+(j-i)\ |\ i<j,\  i,j\in\mathcal{I}^{(s)}\},
\]
and
\[{\mathcal{P}}^{(s,l)}_j=\{(j-1)(m-1)+(i-j)\ |\ i\in \mathcal{I}^{(l)}\}\] with $j\in \mathcal{I}^{(s)}$, and ${\widehat{\Lambda}}_{[{\mathcal{P}}_j^{(s,l)},q]}$ denotes the column vector whose elements are $\{\widehat{\Lambda}_{pq}\}_{p\in \mathcal{P}_j^{(s,l)}}$.

Consider the construction of $\widehat{\Lambda}_{[{\mathcal{P}}_2^{(0)},q]}$. Since $\{\widehat{\Lambda}_{pq}\}_{p\in\mathcal{P}^{(s,l)}}$ is defined in \cref{eqn:mu_pq}, we have the RHS of \eqref{formula26} equalling to zero by direct calculation.

Besides,
\begin{align}	
&\left\|\frac{m}{2}\left(\sum_{j=1}^{m_0}{\widehat{\Lambda}}_{[{\mathcal{P}}^{(0,2)}_j,q]}+\sum_{j=m_0+1}^{m_0+m_1}{\widehat{\Lambda}}_{[{\mathcal{P}}^{(1,2)}_j,q]}\right)\right\|_\infty\nonumber\\
		&\leq \frac{(m-m_2)(m_2-1)}{m_2}\dia(\mathcal{S}^{(2)})
		+cm(m-m_2)\max_{p\in \mathcal{P}^{(1)}}(\gamma_p).\nonumber
		\end{align}
Applying  \cite[Lemma 3]{zhu2014convex}, there exist $\widehat{\Lambda}_{pq} (p\in\mathcal{P}^{(0)}_2)$ satisfying
		\[
		\left|\frac{m}{2}{\widehat{\Lambda}}_{pq}\right|\leq
		\frac{2(m-m_2)(m_2-1)}{m_2^2}\dia(\mathcal{S}^{(2)})
		+\frac{2cm(m-m_2)}{m_2}\max_{i\in \mathcal{P}^{(1)}}(\gamma_i).
		\]
		Furthermore,
		\[
		\left|B_{pq}+\frac{m}{2}\widehat{\Lambda}_{pq}\right|\leq
		\frac{4(m-m_2)(m_2-1)+2m_2^2}{m_2^2}\dia(\mathcal{S}^{(2)})
		+\frac{2c m(m-m_2)}{m_0}\max_{i\in\mathcal{P}^{(1)}}(\gamma_i),\ p\in \mathcal{P}_2^{(0)},
		\]
		By the same analysis, there exist $\widehat{\Lambda}_{pq}(p\in\mathcal{P}^{(0)}_k)$  satisfying \eqref{formula25}, \eqref{formula261} and \eqref{formula26} with
		\[
		\left|B_{pq}+\frac{m}{2}\widehat{\Lambda}_{pq}\right|\leq
		\frac{4(m-m_k)(m_k-1)+2m_k^2}{m_k^2}\dia(\mathcal{S}^{(s)})
		+\frac{2c m(m-m_k)}{m_k}\max_{i\in\mathcal{P}^{(1)}}(\gamma_i),\ p\in\mathcal{P}_k^{(0)}.
		\]
		Therefore, the condition \cref{eqn:compare} is satisfied.
\end{proof}

Now, we begin to prove \cref{theorem1}.
\begin{proof}[Proof of \cref{theorem1}]
If there exist $c$ and $r$ such that they can guarantee the conditions \cref{eq:eqn141,eq:eqn151,eqn:relaxc2} in \cref{lem:suf_three1,lem:sufficient_three2}, we have optimal solution satisfying exact clustering property. 

Condition \cref{eqn:relaxc2} is equivalent to 
\begin{equation}
	\label{eqn:condf3}
	\gamma_p-\frac{4 (m-m_i)}{m_i}\max_{j\in\mathcal{P}^{(1)}}(\gamma_j)>0,
	\end{equation}
	and
	\begin{equation}
	\label{eqn:condf4}
	\frac{\epsilon_i\dia(\mathcal{S}^{(i)})}{\gamma_p -4 \frac{m-m_i}{m_i}\max_{j\in\mathcal{P}^{(1)}}(\gamma_j)}<c,
	\end{equation}
	for any $p\in\mathcal{P}^{(0)}$ and $0\leq i\leq 2$.

There exist $c$ and $r$ such that the condition \cref{eq:eqn141}  and \cref{eqn:condf4} hold true  if and only if
	\begin{equation}
	\label{eqn:existc1}
	\frac{|\tau_q^{s,l}|}{\epsilon_i\dia(\mathcal{S}^{(i)})}>\frac{\frac{m}{2}\left[\max_{p'\in \mathcal{P}^{(s,l)}}|(\rho^{s,l}-\gamma_{p'})|+3\max_{i\in \mathcal{P}^{(1)}}\gamma_i\right]}{\gamma_p-\frac{4(m-m_i)}{m_i}\max_{i\in\mathcal{P}^{(1)}}(\gamma_i)},
	\end{equation}
	for any $1\leq q\leq n$, $0\leq i\leq 2$, $0\leq s<l\leq 2$, and $p\in\mathcal{P}^{(0)}$.

	Since $
	\gamma_i=\exp({-r\|(DA)_i\|_2^2}),
	$
	the separation property \cref{eqn:sep} implies that
	$$\lim\limits_{r\rightarrow+\infty}\max\limits_{i\in\mathcal{P}^{(1)}}(\gamma_i)=0 ~\mbox{and}~\lim\limits_{r\rightarrow+\infty}\frac{\min_{p'\in \mathcal{P}^{(1)}}\gamma_{p'}}{\max_{p\in \mathcal{P}^{(0)}}\gamma_{p}}=0.$$
	Besides, by the assumption of \cref{theorem1}, the centres of each cluster are distinct in all dimensions. Therefore, when taking $r\rightarrow+\infty$ on both sides of \cref{eqn:existc1}, the left side converges to  a finite positive constant and  the right hand side goes to  zero.
	Similarly, the inequalities \cref{eq:eqn141,eq:eqn151,eqn:condf3,eqn:condf4} can also hold true when $r\rightarrow +\infty$.  Therefore, there exists a positive constant $r_0$ such that \cref{eq:eqn141,eq:eqn151,eqn:condf3,eqn:condf4} hold for any $r>r_0$.
\end{proof}

\section{The proof of \cref{thm:GaussianMixture}}
In order to prove \cref{thm:GaussianMixture}, we introduce some technical lemmas.
\begin{lemma}[\cite{LM2000}]
\label{main lemma}Let $(X_{1},...,X_{m})$ be i.i.d. Gaussian variables,
with mean $0$ and variance $1$. Let $\lambda=[\lambda_{1},...,\lambda_{m}]$ be a non-negative vector. Denote $Y=\sum_{i=1}^{m}\lambda_{i}X_{i}^{2}$. Then  for any $t>0$, we have
\[
\mathbb{P}(Y\geq\|\lambda\|_{1}+2\|\lambda\|_{2}\sqrt{t}+2\|\lambda\|_{\infty}t)\leq\exp(-t),
\]
\[
\mathbb{P}(Y\leq\|\lambda\|_{1}-2\|\lambda\|_{2}\sqrt{t})\leq\exp(-t).
\]
\end{lemma}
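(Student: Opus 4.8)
The plan is to establish both tails by the Chernoff (exponential Markov) method, using the closed-form moment generating function of a chi-squared variable. First I would record that for a standard Gaussian $X_i$ and any $s<1/2$ one has $\mathbb{E}[e^{sX_i^2}]=(1-2s)^{-1/2}$; by independence the centered cumulant generating function of $Y$ is
\[
\psi(s):=\log\mathbb{E}\big[e^{s(Y-\|\lambda\|_1)}\big]=\sum_{i=1}^m\Big(-s\lambda_i-\tfrac12\log(1-2s\lambda_i)\Big),
\]
valid for $0\le s<1/(2\|\lambda\|_\infty)$, since $\|\lambda\|_1=\sum_i\lambda_i$.

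The analytic heart of the argument is the elementary inequality $-u-\tfrac12\log(1-2u)\le u^2/(1-2u)$ for $0\le u<1/2$, which one verifies in a line either by comparing the two power series coefficient by coefficient (the coefficient of $u^k$ on the left is $2^{k-1}/k\le 2^{k-2}$ for $k\ge2$) or by checking the sign of the difference's derivative. Applying it with $u=s\lambda_i$ and then replacing each $\lambda_i$ by $\|\lambda\|_\infty$ in the denominators yields
\[
\psi(s)\le\sum_{i=1}^m\frac{s^2\lambda_i^2}{1-2s\lambda_i}\le\frac{s^2\|\lambda\|_2^2}{1-2s\|\lambda\|_\infty}.
\]

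For the upper tail I would then write $\mathbb{P}(Y-\|\lambda\|_1\ge z)\le\exp(-sz+\psi(s))$ and, rather than differentiating the two-regime exponent, substitute the explicit choice $s=\sqrt t/(\|\lambda\|_2+2\|\lambda\|_\infty\sqrt t)$, which lies in $[0,1/(2\|\lambda\|_\infty))$. A direct computation shows $1-2\|\lambda\|_\infty s=\|\lambda\|_2/(\|\lambda\|_2+2\|\lambda\|_\infty\sqrt t)$, and at $z=2\|\lambda\|_2\sqrt t+2\|\lambda\|_\infty t$ the exponent $-sz+s^2\|\lambda\|_2^2/(1-2s\|\lambda\|_\infty)$ collapses exactly to $-t$, giving the first inequality.

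For the lower tail I would run the same computation on $-Y$, i.e.\ with $s>0$ in $\mathbb{E}[e^{-s(Y-\|\lambda\|_1)}]$, whose logarithm is $\sum_i(s\lambda_i-\tfrac12\log(1+2s\lambda_i))$. Here the sharper elementary bound $u-\tfrac12\log(1+2u)\le u^2$ holds for all $u\ge0$ (the difference vanishes at $0$ and has non-positive derivative $-4u^2/(1+2u)$), which gives the denominator-free estimate $\log\mathbb{E}[e^{-s(Y-\|\lambda\|_1)}]\le s^2\|\lambda\|_2^2$; minimizing $-sz+s^2\|\lambda\|_2^2$ over $s>0$ at $s=z/(2\|\lambda\|_2^2)$ produces the Gaussian-type threshold $z=2\|\lambda\|_2\sqrt t$, which explains why no $\|\lambda\|_\infty t$ term appears on this side. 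The only delicate point is the upper-tail optimization: because the exponent interpolates between a sub-Gaussian regime (small $s$) and a sub-exponential one (as $s\to 1/(2\|\lambda\|_\infty)$), the crude sub-gamma transform loses a factor of two in the scale term, so one must use the exact minimizer above; everything else reduces to the two routine one-variable inequalities.
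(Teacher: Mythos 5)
Your proposal is correct, but note that the paper itself offers no proof of \cref{main lemma} to compare against: the statement is imported as a black box from Laurent and Massart \cite{LM2000}, and the paper only uses it downstream (in \cref{prop: Main proposition} and \cref{thm:GaussianMixture}). What you have reconstructed is essentially the original argument of that reference: a Chernoff bound on $Y-\|\lambda\|_1$, the elementary estimate $-u-\tfrac12\log(1-2u)\le u^2/(1-2u)$ for the upper tail and $u-\tfrac12\log(1+2u)\le u^2$ for the lower tail, followed by an explicit (near-)optimal choice of the tilting parameter. Your computations check out: the coefficientwise series comparison $2^{k-1}/k\le 2^{k-2}$ for $k\ge 2$ is valid, the choice $s=\sqrt t/(\|\lambda\|_2+2\|\lambda\|_\infty\sqrt t)$ indeed lies in $[0,1/(2\|\lambda\|_\infty))$ and makes the exponent equal exactly $-t$ at $z=2\|\lambda\|_2\sqrt t+2\|\lambda\|_\infty t$, and the lower tail correctly reduces to a purely sub-Gaussian bound, which is why no $\|\lambda\|_\infty t$ term appears there. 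The only implicit assumption worth stating is $\lambda\neq 0$ (so that $\|\lambda\|_2>0$ and your $s$ is well defined); this is harmless, since the statement is vacuous in the degenerate case.
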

\begin{lemma}
\label{prop: Main proposition}Let $x$ sampled from $\mathcal{N}(\mu,\Sigma)$,
where $\Sigma\in\mathbb{R}^{n\times n}$ is the covariance matrix.
If $\mu=0$, then
\begin{equation}
\mathbb{P}(\|x\|_{2}^{2}\geq\mbox{\mbox{Tr}}(\Sigma)+2\|\Sigma\|_{F}\sqrt{t}+2\|\Sigma\|t)\leq\exp(-t),\qquad\mbox{for}\ t>0.\label{eq: upper bound inequality}
\end{equation}
 For arbitrary $\mu$, we get
\begin{equation}
\mathbb{P}\left(\|x\|_{2}^{2}\leq\|\mu\|_{2}^{2}+\mbox{Tr(\ensuremath{\Sigma})}-(2\|\Sigma\|_{F}+2\|\Sigma\|^{\frac{1}{2}}\|\mu\|_{2})\sqrt{t}\right)\leq2\exp(-t)\qquad\mbox{for}\ t>0.\label{eq: lower bound inequality}
\end{equation}
\end{lemma}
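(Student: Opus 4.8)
The plan is to reduce both inequalities to the central chi-square concentration of \cref{main lemma} by diagonalizing the covariance. I would start from the spectral decomposition $\Sigma = U\Lambda U^T$ with $U$ orthogonal, $\Lambda=\diag(\lambda_1,\dots,\lambda_n)$, and eigenvalues $\lambda_i\geq 0$, and set $\lambda=(\lambda_1,\dots,\lambda_n)$. Because $\Sigma$ is symmetric positive semidefinite, the three quantities appearing in \cref{main lemma} are exactly the matrix norms in the statement: $\|\lambda\|_1=\mbox{Tr}(\Sigma)$, $\|\lambda\|_2=\|\Sigma\|_F$ (since $\|\Sigma\|_F^2=\mbox{Tr}(\Sigma^2)=\sum_i\lambda_i^2$), and $\|\lambda\|_\infty=\|\Sigma\|$. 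Writing $x=\mu+U\Lambda^{1/2}g$ with $g\sim\mathcal N(0,I_n)$ and using that the orthogonal $U$ preserves the Euclidean norm, I get the key identity $\|x\|_2^2=\sum_{i=1}^n(\nu_i+\sqrt{\lambda_i}\,g_i)^2$, where $\nu=U^T\mu$ and $\|\nu\|_2=\|\mu\|_2$.

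For the case $\mu=0$ this identity reads $\|x\|_2^2=\sum_i\lambda_i g_i^2$, which is precisely the variable $Y$ of \cref{main lemma} with weight vector $\lambda$. Its upper-tail estimate then yields \eqref{eq: upper bound inequality} immediately after substituting the three norm identities recorded above, so no additional argument is needed in this case.

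For arbitrary $\mu$ I would expand $\|x\|_2^2=\|\mu\|_2^2+W+2V$, where $W=\sum_i\lambda_i g_i^2$ is the same central chi-square as before and $V=\sum_i\nu_i\sqrt{\lambda_i}\,g_i$ is a centered Gaussian with variance $\sigma_V^2=\sum_i\nu_i^2\lambda_i\leq\|\Sigma\|\,\|\mu\|_2^2$. The lower-tail half of \cref{main lemma} controls $W$ from below, giving $W\geq\mbox{Tr}(\Sigma)-2\|\Sigma\|_F\sqrt t$ with probability at least $1-e^{-t}$, while the standard sub-Gaussian tail controls the linear term, giving $V\geq-\sigma_V\sqrt{2t}\geq-\|\Sigma\|^{1/2}\|\mu\|_2\sqrt{2t}$ with probability at least $1-e^{-t}$. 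Intersecting these two events, with a union bound on the two failure probabilities accounting for the factor $2$ in \eqref{eq: lower bound inequality}, produces the claimed lower bound on $\|x\|_2^2$.

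The one genuinely delicate point is the cross term $2V$: shifting by $\mu$ turns $\|x\|_2^2$ into a weighted \emph{non-central} chi-square, which \cref{main lemma} does not cover directly, so it is the linear-plus-quadratic splitting that makes the central lemma usable. Everything else is bookkeeping of constants — confirming that the union bound returns exactly $2\exp(-t)$ and checking that the stated coefficient $2\|\Sigma\|^{1/2}\|\mu\|_2$ is large enough to absorb the Gaussian deviation $\sigma_V\sqrt{2t}$. I would pay particular attention to the numerical factor in front of $\|\Sigma\|^{1/2}\|\mu\|_2$, since the sub-Gaussian tail naturally produces a $\sqrt 2$ that must be reconciled with the coefficient written in \eqref{eq: lower bound inequality}.
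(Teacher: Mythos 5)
Your proposal follows exactly the paper's route: diagonalize $\Sigma$, identify $\|\lambda\|_1=\mbox{Tr}(\Sigma)$, $\|\lambda\|_2=\|\Sigma\|_F$, $\|\lambda\|_\infty=\|\Sigma\|$, reduce the quadratic part to the weighted chi-square bounds of \cref{main lemma}, control the cross term by a scalar Gaussian tail, and finish with a union bound; the $\mu=0$ case is identical to the paper's, and your treatment of the general case is correct as far as it goes.

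The $\sqrt{2}$ issue you flag at the end is genuine, but it is a defect of the lemma's printed constant rather than of your argument. Your tail estimate $\mathbb{P}(V\leq-\sigma_V\sqrt{2t})\leq e^{-t}$ delivers \eqref{eq: lower bound inequality} with coefficient $2\sqrt{2}\,\|\Sigma\|^{1/2}\|\mu\|_2$ in place of $2\|\Sigma\|^{1/2}\|\mu\|_2$, and this cannot be reconciled with the statement as printed: in dimension one with $\Sigma=1$ and $\mu\to\infty$, the event in \eqref{eq: lower bound inequality} tends to $\{g\leq-\sqrt{t}\}$ for $g\sim\mathcal{N}(0,1)$, whose probability decays like $e^{-t/2}$ and therefore exceeds $2e^{-t}$ for large $t$; so the printed coefficient $2$ is too small, while $2\sqrt{2}$ is essentially the threshold at which the $2e^{-t}$ bound becomes valid. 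The paper's own proof suffers from the same problem in a worse form: it proves \eqref{eqn:inequ2_appendix}, a bound on $\langle\tilde{\mu},\Lambda^{1/2}y\rangle$ at level $-2\|\Sigma\|^{1/2}\|\mu\|_2\sqrt{t}$, but then ignores the factor $2$ multiplying this inner product in the expansion of $\|z\|_2^2$, so what it actually establishes is \eqref{eq: lower bound inequality} with coefficient $4\|\Sigma\|^{1/2}\|\mu\|_2$. In short, your argument matches the paper's, your constant is sharper than the one the paper's proof really delivers, and the residual mismatch is the paper's error; correcting the constant only shifts absolute constants in the separation condition of \cref{thm:GaussianMixture} and changes nothing qualitatively downstream.
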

\begin{proof}
The singular value decomposition of $\Sigma$ is $U\varLambda U^{*}$,
where $U\in\mathbb{R}^{n\times n}$ is an unitary matrix and $\Lambda$
is a positive semi-definite diagonal matrix. Denote $\Lambda^{\frac{1}{2}}$
as the squared root of $\Lambda$. If $y\sim\mathcal{N}(0,I)$, 
$\|\Lambda^{\frac{1}{2}}y\|_{2}$ and $\|x\|_{2}$ have the same distribution
when $\mu=0$. Then applying Lemma \ref{main lemma} with $\lambda_{i}=\Lambda_{i}$,
where $\Lambda_{i}$ is the $i$th element in the diagonal of $\Lambda$, we can get (\ref{eq: upper bound inequality}). 

Now we prove inequality (\ref{eq: lower bound inequality}) when $\mu\neq0$. Since $U^{*}$ is unitary,
$\|x\|_{2}^{2}$ has the same distribution with $\|U^{*}x\|_{2}^{2}$. Then $U^{*}x\sim\mathcal{N}(\tilde{\mu},\Lambda)$,
where $\tilde{\mu}=U^{*}\mu$. Denote $z=\Lambda^{\frac{1}{2}}y+\tilde{\mu}$,
where $y\sim\mathcal{N}(0,I)$. Then $z$ has the same distribution
as $U^{*}x$. We can see that $\|z\|_{2}^{2}=\|\Lambda^{\frac{1}{2}}y\|_{2}^{2}+2\langle\tilde{\mu},\Lambda^{\frac{1}{2}}y\rangle+\|\tilde{\mu}\|_{2}^{2}$. By \cref{main lemma}, we obtain that 
\begin{equation}
\label{eqn:inequ1_appendix}
\mathbb{P}\left(\|\Lambda^{\frac{1}{2}}y\|_{2}^{2}\leq \mbox{Tr}(\Lambda)-2\|\Lambda\|_{F}\sqrt{t}=\mbox{Tr}(\Sigma)-2\|\Sigma\|_{F}\sqrt{t}\right)\leq\exp(-t).
\end{equation}

Meanwhile, we get that $\langle \tilde{\mu},\Lambda^{\frac{1}{2}}y\rangle$ is
distributed as $\mathcal{N}(0,\mu^{T}\Sigma\mu)$. Since
\[
\mathbb{P}\left(\left|\frac{\langle\tilde{\mu},\Lambda^{\frac{1}{2}}y\rangle}{\sqrt{\mu^{T}\Sigma\mu}}\right|>t\right)\leq\exp(-t^{2}/2),\quad \mbox{for}\ t>0
\]
by the tail bound of standard Gaussian distribution, therefore 
\begin{equation}
\label{eqn:inequ2_appendix}
\mathbb{P}\left(\langle\tilde{\mu},\Lambda^{\frac{1}{2}}y\rangle<-2\|\Sigma\|^{\frac{1}{2}}\|\mu\|_{2}\sqrt{t}\right)\leq\mathbb{P}\left(\frac{\langle\tilde{\mu},\Lambda^{\frac{1}{2}}y\rangle}{\sqrt{\mu^{T}\Sigma\mu}}<-2\sqrt{t}\right)\leq\exp(-t),
\end{equation}
for all $t>0$. For arbitrary random variables $X$ and $Y$, we have
\[
\mathbb{P}(X+Y\leq a+b)\leq\mathbb{P}(X\leq a)+\mathbb{P}(Y\leq b).
\]
Combining \cref{eqn:inequ1_appendix,eqn:inequ2_appendix}, we obtain
\[
\mathbb{P}(\|x\|_{2}^{2}\leq\|\mu\|_{2}^{2}+\mbox{Tr(\ensuremath{\Sigma})}-(2\|\Sigma\|_{F}+2\|\Sigma\|^{\frac{1}{2}}\|\mu\|_{2})\sqrt{t})\leq2\exp(-t)\qquad\mbox{for}\ t>0.
\]
\end{proof}
Now we begin to prove the theorem.
\begin{proof}[Proof of \cref{thm:GaussianMixture}]
Since the centers $\mu_k$ of gaussian distributions are distinct in each dimension, the mean values of samples inside each cluster are different in each dimension with probability $1$.
In order to have inner-class distance smaller than the inter-class
distance with high probability, we should have 
$
\|A_{i}-A_{j}\|_2>\|A_{k_1}-A_{k_2}\|_2
$
 for all $i\in\mathcal{S}^{(s)}$, $j\in\mathcal{S}^{(l)}$ ($0\leq s<l\leq k-1$) and $k_{1},k_{2}\in\mathcal{S}^{(i)}$($i=0,\cdots,k-1$).
 
 If $i\in\mathcal{S}^{(s)}$, $j\in\mathcal{S}^{(l)}$, we can see that $A_{i}-A_{j}\sim\mathcal{N}(\mu_{s}-\mu_{l},\Sigma_{s}+\Sigma_{l})$. Based on \cref{prop: Main proposition}, for fixed $s$ and $l$, we have
\begin{eqnarray*}
\mathbb{P}\biggl(\exists i\in\mathcal{S}^{(s)},j\in\mathcal{S}^{(l)}\quad  &  & \|A_{i}-A_{j}\|_{2}^{2}\leq\|\mu_{s}-\mu_{l}\|_{2}^{2}+\mbox{Tr}(\Sigma_{s}+\Sigma_{l})\\
 &  & -(2\|\Sigma_{s}+\Sigma_{l}\|_{F}+2\|\Sigma_{s}+\Sigma_{l}\|^{\frac{1}{2}}\|\mu_{s}-\mu_{l}\|_{2})\sqrt{3\log m}\biggr)\\
 &  & \leq \frac{2 m_km_l}{m^{3}}.
\end{eqnarray*}
Meanwhile, for $k_{1},k_{2}\in\mathcal{S}^{(i)}$($0\leq i\leq k-1$) with $k_1\neq k_2$, we get $A_{k_{1}}-A_{k_{2}}\sim\mathcal{N}(0,2\Sigma_{i})$ and the union bound for all clusters becomes
\begin{eqnarray*}
\mathbb{P}(\exists k_{1},k_{2}\in\mathcal{S}^{(i)}\quad & \|A_{k_{1}}-A_{k_{2}}\|_{2} & \geq\max_{i}(\mbox{Tr}(\Sigma_{i})+2\|\Sigma_{i}\|_{F}\sqrt{3\log m}+6\|\Sigma_{i}\|\log m))\\
 &  & \leq \frac{\sum_i m_i(m_i-1)}{2m^3}.
\end{eqnarray*}

Based on  the above two inequalities, if for all $0\leq s<l\leq k-1$, 
\begin{equation}
\begin{split}
\|\mu_{s}-\mu_{l}\|_{2}^{2}+\mbox{Tr}(\Sigma_{s,l}) & -  (2\|\Sigma_{s,l}\|_{F}+2\|\Sigma_{s,l}\|^{\frac{1}{2}}\|\mu_{s}-\mu_{l}\|_{2})\sqrt{3\log m}\\
 &  >\mbox{\ensuremath{\max}}_{i}(\mbox{\mbox{Tr}}(\Sigma_{i})+2\|\Sigma_{i}\|_{F}\sqrt{3\log m}+6\|\Sigma_{i}\|\log m),
 \end{split}
\end{equation}
where $\Sigma_{s,l}=\Sigma_{s}+\Sigma_{l}$,  we have $
\|A_{i}-A_{j}\|_2>\|A_{k_1}-A_{k_2}\|_2
$
 for all $i\in\mathcal{S}^{(s)}$, $j\in\mathcal{S}^{(l)}$ ($0\leq s<l\leq k-1$) and $k_{1},k_{2}\in\mathcal{S}^{(i)}$($i=0,\cdots,k-1$) with probability larger than $1-3/m$.
 
 Therefore, if 
 \begin{eqnarray*}
\|\mu_{s}-\mu_{l}\|_2 & > & \|\Sigma_{s,l}\|^{\frac{1}{2}}\sqrt{12\log m}+{(12\log m)}^{1/4}\|\Sigma_{s,l}\|_{F}^{1/2}\\
 &  & +\max_i\sqrt{{\mbox{Tr}}(\Sigma_{i})+2\|\Sigma_{i}\|_{F}\sqrt{3\log n}+2\|\Sigma_{i}\|3\log m},
\end{eqnarray*}
for all $0\leq s<l\leq k-1$, we have the exact recovery of model \cref{eqn:weightconvex}.
\end{proof}

\end{document}